\documentclass{amsart}
\usepackage{amsmath,amssymb}
\usepackage{graphicx}
\newtheorem{theorem}{Theorem}[section]
\newtheorem{proposition}[theorem]{Proposition}
\newtheorem{corollary}[theorem]{Corollary}

\newtheorem{lemma}[theorem]{Lemma}

\theoremstyle{definition}
\newtheorem{definition}[theorem]{Definition}

\theoremstyle{remark}
\newtheorem{remark}[theorem]{Remark}

\numberwithin{equation}{section}

% Greek letters (lowercase)
\newcommand{\al}{\alpha}
\newcommand{\be}{\beta}
\newcommand{\de}{\delta}
\newcommand{\ep}{\epsilon}

\newcommand{\ga}{\gamma}

\newcommand{\si}{\sigma}
\newcommand{\te}{\theta}
\newcommand{\vp}{\varphi}

%
% Greek letters (uppercase)
\newcommand{\De}{\Delta}
\newcommand{\Ga}{\Gamma}

%
% Boldface
\newcommand{\bD}{\mathbf{D}}
\newcommand{\bH}{\mathbf{H}}
\newcommand{\bL}{\mathbf{L}}
%
% Star

%
% Tilde

\newcommand{\tX}{\tilde{\mathcal X}}

\newcommand{\trho}{\tilde{\rho}}
\newcommand{\tN}{\widetilde{\cN}}

%

% Blackboard bold

\def\NN{\mathbb{N}}
\def\RR{\mathbb{R}}

\def\HH{\mathbb{H}}

\renewcommand\SS{\mathbb{S}}

% Calligraphic

\newcommand{\cB}{{\mathcal B}}

\newcommand{\cK}{{\mathcal K}}
\newcommand{\cM}{{\mathcal M}}
\newcommand{\cN}{{\mathcal N}}
\newcommand{\cO}{{\mathcal O}}

\newcommand{\cT}{{\mathcal T}}
\newcommand{\cU}{{\mathcal U}}
\newcommand{\cV}{{\mathcal V}}

\newcommand\cX{\mathcal X}

%
% Bar

%
% Misc

\newcommand{\pd}{\partial}
\newcommand\minus\backslash

\newcommand{\ms}{\mspace{1mu}}

% Log-like symbols

%\renewcommand{\limsup}{\operatornamewithlimits{\overline{lim}}}
%\renewcommand{\liminf}{\operatornamewithlimits{\underline{lim}}}

\newcommand{\e}{{\mathrm e}}

\renewcommand\leq\leqslant
\renewcommand\geq\geqslant
%
% Barred integrals (perhaps with \usepackage[intlimits]{amsmath}
\newlength{\intwidth}

%
% Roman enumeration
\addtolength{\parskip}{3pt}

%    Blank box placeholder for figures (to avoid requiring any
%    particular graphics capabilities for printing this document).

\newcommand\AdS{\mathrm{AdS}_n}
\newcommand\cpt{_{\mathrm{c}}}
\newcommand\loc{_{\mathrm{loc}}}
\newcommand\pol{_{\mathrm{p}}}

\begin{document}

\title[Wave equations and holography in asymptotically AdS spaces]{A singular initial-boundary value
  problem for nonlinear wave equations and holography in asymptotically anti-de
  Sitter spaces}

%    Information for first author
\author{Alberto Enciso}
%    Address of record for the research reported here
\address{Instituto de Ciencias Matem\'aticas, Consejo Superior de
  Investigaciones Cient\'\i ficas, 28049 Madrid, Spain}
\email{aenciso@icmat.es}
%    \thanks will become a 1st page footnote.

%    Information for second author
\author{Niky Kamran}
\address{Department of Mathematics
  and Statistics, McGill University, Montr\'eal, Qu\'ebec, Canada H3A 2K6}
\email{nkamran@math.mcgill.ca}

%%    General info
%\subjclass[2010]{35B38, 58J05, 58K45}
%\date{\today}
%
%\keywords{ }
%
\begin{abstract}
  We analyze the initial value problem for semilinear wave equations
  on asymptotically anti-de Sitter spaces using energy methods adapted
  to the geometry of the problem at infinity. The key feature is that
  the coefficients become strongly singular at infinity, which leads
  to considering nontrivial data on the conformal boundary of the
  manifold. This question arises in Physics as the holographic
  prescription problem in string theory.
\end{abstract}
\maketitle
\setcounter{tocdepth}{1}
\tableofcontents

\section{Introduction}
\label{S:intro}

%SET UP OF THE PROBLEM

Our goal in this paper is to study certain kinds of semilinear wave
equations with non-constant coefficients that are of interest in string
theory. These equations are defined by the fact that their principal
part is close, in a certain sense, to the wave operator of the
$n$-dimensional anti-de Sitter (AdS) space, and their salient feature is that the
coefficients become very singular at infinity, so that the problem can
be thought of as including some (nontrivial) boundary conditions at
infinity. The analysis of the effect of the associated ``boundary data
at infinity'' on the solutions of the wave equation, which is the
main theme of this paper, is key in the analysis of these wave
equations and, as shall see later on, has a very direct physical
interpretation in the context of the celebrated AdS/CFT correspondence~\cite{Maldacena}.

More specifically, we shall analyze wave equations of the form
\begin{equation}\label{LW}
\square_g\phi-\mu\ms\phi=0\,,
\end{equation}
and their semilinear analog
\begin{equation}\label{NLW}
\square_g\phi-\mu\ms\phi=F(\nabla\phi)\,,
\end{equation}
in Lorentzian manifolds whose geometry at infinity is close to that of
the $n$-dimensio-\break nal AdS space $\AdS$. These manifolds are called
asymptotically AdS, and their precise definition will be given in
Section~\ref{S.LW}. In the equations,  $\mu$ is a real constant,
$\nabla$~stands for the space-time gradient and we omit notationally
the possible dependence of the nonlinearity $F(\nabla\phi)$ on the
space-time variables.

To give a clearer idea of the structure of these equations, and also to set up some notation, it is useful to first recall the definition of AdS space as well as the form of the scalar wave equation therein.
The $n$-dimensional AdS space is the simply connected Lorentzian space of constant negative sectional curvature $-K$. The metric in $\AdS$ is thus given by
\begin{equation*}%\label{LWAdS}
g_{AdS_{n}}=-K^{-2}\cosh^2 (Kr)\, dt^2+dr^2+K^{-2}\sinh^2 (Kr)\, g_{\SS^{n-2}}\,,
\end{equation*}
where $t\in\RR$ is the time variable, $r\in\RR^+$,
$\theta\in\SS^{n-2}$ and $g_{\SS^{n-2}}$ is the metric of the unit
$(n-2)$-sphere. In what follows we will take $K:=1/2$.

To analyze the wave equation in $\AdS$, it is convenient to replace the radial coordinate
$r$ by a certain function thereof, $x$, which takes values in $(0,1]$
and is defined through the relation
\[
x:=\bigg(2\cosh \frac r2-1\bigg)^{-\frac12}\,.
\]
In terms of this variable, the AdS metric is given by
\begin{equation}\label{AdSx}
g_{AdS_{n}}= \frac{-(1+x^2)^2\,d t^2+d x^2+(1-x^2)^2\, g_{\SS^{n-2}}}{x^2}\,,
\end{equation}
and the wave equation~\eqref{LW} in $\AdS$ takes the form
\begin{equation}\label{LWAdS2}
-\pd_{t}^2\phi+\pd_{x}^2\phi-\frac{n-2 }x\, \pd_x \phi
+\De_\te \phi-\frac{\mu}{x^2}\, \phi+\cdots=0\,,
\end{equation}
where $\De_\te$ is the Laplacian in $\SS^{n-2}$ and the dots stand for
terms that are smaller, in a certain sense, 
for $x$ close to zero.

The point $x=1$ corresponds to the
spatial origin $r=0$ while the set $\{x=0\}$ (which is a cylinder
$\RR_t\times\SS^{n-2}_\theta$ and is often called the {\em conformal
boundary}\/ or {\em conformal infinity}\/ of the manifold) corresponds to the
spatial infinity of the manifold. Notice that if we conformally
rescale the metric~(\ref{AdSx}) by the factor $x^2$ so that it becomes
smooth when $x=0$, and then compactify the manifold by adding the
boundary set $\{x=0\}$, then this boundary, which is homeomorphic to a
cylinder, is a time-like hypersurface in the extended manifold endowed
with the Lorentzian metric $-dt^2+g_{\SS^{n-2}}$.

The coefficients of the equation are
obviously singular at $x=0$. A simple analysis of the singularities reveals
that the solutions to~\eqref{LWAdS2} are expected to behave at conformal
infinity as
\[
x^{\frac{n-1}2+\al}\big[1+\cO(x)\big]\quad \text{or}\quad x^{\frac{n-1}2-\al}\big[1+\cO(x)\big]\,,
\]
where we hereafter write
\begin{equation}\label{al}
\al:=\bigg[\bigg(\frac{n-1}2\bigg)^2+\mu\bigg]^{1/2}\,.
\end{equation}
Throughout this paper we will assume that 
\begin{equation}\label{threshold}
\mu>-\bigg(\frac{n-1}2\bigg)^2\,,
\end{equation}
so that $\al$ is strictly positive.  Let us mention in passing that
when $\al<1$, both expansions at infinity are square-integrable, which
means that one needs to impose certain boundary conditions at infinity
to determine the evolution of the wave equation. This is directly
related with the fact that $\AdS$, just as any other asymptotically
AdS space, is not globally hyperbolic because null geodesics escape to
infinity for finite values of their affine parameter. This has the
effect that the evolution under the wave equation~\eqref{LW} in $\AdS$
of smooth, compactly supported initial data does not remain compactly
supported in space for all values of time.  It should be stressed,
however, that we will be interested in the boundary behavior for all
values of the parameter $\al$, not just for $\al<1$.

%MOTIVATION OF THE PROBLEM

Our goal in this paper is to analyze how solutions to the
Eqs.~\eqref{LW}--\eqref{NLW} in an asymptotically AdS space $\cM$
are determined by prescribing initial conditions {\em and}\/ a (compatible)
boundary condition at infinity. We are mostly interested in the role
of the latter, as this is the key property of this equation. To
prescribe these boundary conditions at conformal infinity, we pick the
dominant exponent and in $\AdS$ it is enough to impose 
\[
x^{\al-\frac{n-1}2}\phi|_{x=0}=f(t,\theta)\,,
\]
where $f$ is the datum of the problem. In an arbitrary asymptotically
AdS manifold $\cM$ there is a direct counterpart of this condition,
\begin{equation}\label{BCs2}
x^{\al-\frac{n-1}2}\phi|_{\pd\cM}=f\,,
\end{equation}
but we will not make this precise yet. Of course, the
initial conditions we take are of the form
\[
\phi|_{\cM_0}=\phi_0\,,\qquad \cT\phi|_{\cM_0}=\phi_1\,,
\]
where $\cM_0$ is a time slice and the vector field $\cT$ roughly
encodes the partial derivative with respect to time (details will be given in
Section~\ref{S.LW}). For the purposes of this Introduction, one
can take trivial initial conditions $\phi_0=\phi_1=0$.

As we have mentioned, the problem under consideration arises naturally
in the context of the AdS/CFT correspondence in string
theory~\cite{Maldacena}, so we shall elaborate a little on this
topic. This is a conjectural relation which posits that a
gravitational field on a Lorentzian $n$-manifold endowed with an
Einstein metric metric close to the $\AdS$ metric at infinity can be
recovered from a gauge field defined on the conformal boundary of the
manifold. The gravitational field is modeled using some PDE of
hyperbolic character in the manifold (typically, the Einstein equation) and
the gauge field at conformal infinity plays the role of boundary datum
through a relation analogous to~\eqref{BCs2}. Since, in harmonic
coordinates, the Einstein equation reduces to a nonlinear wave
equation, at the heart of the AdS/CFT correspondence lies a
boundary-value problem closely related to the one we have stated above
for the scalar wave equation~\eqref{NLW}. Indeed, the {\em holographic
  principle}\/ asserts that the boundary data (which here, in the
context of scalar wave equations, would be the function $f$), defined on
defined on an $(n-1)$-dimensional boundary, propagates through a
suitable $n$-manifold (which is referred to as the {\em bulk}\/ and is
denoted here by $\cM$) to determine the field (here $\phi$) via a
locally well posed problem.

It is important at this stage to note that most rigorous results
related to the holographic principle have been obtained for the
elliptic counterparts of these equations. As is well known, the Riemannian analog
of $\AdS$ is the hyperbolic space $\HH^n$, whose metric we describe using
coordinates $r\in[0,1)$ and $\theta\in\SS^{n-1}$ as
\[
g_{\HH^n}=\frac{dr^2+r^2g_{\SS^{n-1}}}{(1-r^2)^2}\,.
\]
Setting $x:=1-r^2$, it is classical that the corresponding boundary value problem
%\begin{equation}\label{Hn}
\begin{equation}\label{Riem}
\De_{{\HH^n}}\phi-\mu\ms\phi=0\,,\qquad x^{\al-\frac{n-1}2}\phi|_{x=0}=f(\theta)\,,
\end{equation}
% \end{equation}
has a unique solution. Lacking results on the Lorentzian analog of
this problem, the boundary value problem~ (\ref{Riem}) has been used
as the basic model to understand holographic prescription, starting
with Witten's paper~\cite{Witten}.  Of course, the results for the
hyperbolic space remain valid in much more general contexts. For
instance, in the case $\mu=0$, they have been extended by
Anderson~\cite{Anderson83}, Sullivan~\cite{Sullivan} and Anderson and
Schoen~\cite{AS} to treat harmonic functions on manifolds whose
sectional curvature is not assumed to be a negative constant, as is
the case of $\HH^n$, but is pinched between two distinct negative
constants.

For the full Einstein equations with Riemannian signature, the
situation is already much more subtle~\cite{Anderson05}. A fundamental result in this
direction, due to Graham and Lee~\cite{GL}, states that given a Riemannian
metric $g_0$ on the sphere close enough to the round metric, there is
a unique asymptotically hyperbolic metric in the ball close to $\HH^n$
that has $g_0$ as its boundary value, in a suitable sense. In
Riemannian signature, this shows how
the holographic principle works when the boundary datum is small,
which is the case of interest in cosmological applications.
Significant refinements of the Graham--Lee theorem can be found
in~\cite{Anderson08} and references therein.

The situation for the holographic prescription problem in Lorentzian
signature is much less clear-cut, since both the available analytical techniques  
and the expected results are necessarily different. To the best of our knowledge, the wave equation on AdS$_4$ was first
considered in~\cite{BF}, where separation of variables was used to
discuss the behavior of the energy for all values of $\mu$ above the
threshold value~\eqref{threshold}. Again for AdS$_4$,
Choquet-Bruhat~\cite{Choquet1,Choquet2} proved global existence for
the Yang--Mills equation under a radiation condition, and Ishibashi
and Wald gave a proof of the well-posedness of the Cauchy problem for
the Klein--Gordon equation~\eqref{LW} in $\AdS$ using spectral
theory. More refined results for the Klein--Gordon equation in an AdS space were
developed by Bachelot~\cite{Bachelot1,Bachelot2,Bachelot3}, who used energy
methods and dispersive estimates to study the decay of the solutions
and prove Strichartz estimates and some results on the propagation of
singularities.

An important paper is due to Vasy,~\cite{Vasy}, where fine results on the
propagation of singularities are proved for the Klein--Gordon equation
on asymptotically AdS spaces using microlocal analysis. Holzegel and
Warnick, both independently and in joint
work~\cite{Holzegel,Warnick,HW}, used energy methods to show that the
Cauchy problem for this equation in asymptotically AdS$_4$ space-times is well posed in twisted
Sobolev spaces (considering different
``homogeneous boundary conditions'' when the parameter $\mu$ is
negative and above its threshold value), and discussed the uniform
boundedness of solutions to the Klein--Gordon equation in stationary AdS black
hole geometries.

%OUR RESULTS IN SOME DETAIL

In this paper, we prove that the boundary value problem at infinity for the wave
equations~\eqref{LW}--\eqref{NLW} in an asymptotically AdS manifold is
well posed in a certain scale of Sobolev spaces adapted to the
geometry of the space-time. In the language of physics, this can be
rephrased as saying that the holographic prescription problem is well
posed for scalar fields with suitable nonlinearities. The prescription
has the physically critical properties of being {\em fully holographic}\/ and
{\em causal}\/, which essentially means that, for trivial initial conditions
and compactly supported datum $f$ on the conformal boundary, the field
$\phi$ is purely controlled by $f$ and is identically zero for all
times below the support of this function~\cite{PRD}.

The precise statements of the well-posedness results proved in this
paper are given in Theorems~\ref{T.LW} and~\ref{T.nonlinear}. Although
we will not reproduce these statements here to avoid introducing too
much notation, we shall nevertheless discuss the content of these theorems in some
detail. 

For the linear wave equation, we use energy estimates to prove
that, given (compatible) initial conditions and a datum on the
conformal boundary $f$, there is a unique solution of~\eqref{LW}. This
solution is defined for all time and can be estimated in terms of the
data in suitable Sobolev spaces with norms that are defined in a
way that compensates for the singular behavior of the metric at the
conformal boundary. An interesting feature is that, due to the form
of the energy, one does not simply get the usual weighted Sobolev
spaces, but rather a twisted version thereof, given in~(\ref{bH}),
involving both a weight vanishing at the conformal boundary of the
manifold and twisted derivatives, where the twist factor conjugating
the derivative (see~(\ref{twist})) is directly related to the geometry
of the asymptotically AdS space at infinity through the ``renormalized
energy'' considered by Breitenlohner and Freedman~\cite{BF}. One also
shows that these Sobolev-type estimates imply the pointwise decay of
the solutions by proving suitable Sobolev embedding theorems. 

We would like to point out at this stage that Warnick~\cite{Warnick}
has proved in the range $\alpha < 1 $ a well-posedness result of a
nature similar to that of Theorem~\ref{T.LW}, obtaining first-order
Sobolev estimates relating the norm of the unique weak solution of the
initial-boundary value problem for the Klein--Gordon equation to the
norms of the initial and boundary data. The results that we prove in
Theorem~\ref{T.LW} are somewhat stronger, as they apply to the full
range $\alpha>0$ and involve estimates on the higher-order Sobolev
norms of the solution and its time derivatives. These stronger results
turn out to be of crucial importance for the proof we give in
Theorem~\ref{T.nonlinear} of the well-posedness for non-linear wave
equations. Vasy's microlocal approach to wave equations on
asymptotically AdS spacetimes~\cite{Vasy} also yields a closely
related result for the Klein--Gordon equation, together with more
powerful results on the propagation of singularities. However, the use
of twisted Sobolev spaces together with suitable embedding theorems
seems to be better suited for the analysis of nonlinear wave equations
on these spacetimes, which is the ultimate goal of this paper.

Indeed, for the nonlinear wave equation we prove a local well-posedness
result (with analogous estimates) using a bootstrap
argument. Although the argument is applicable to more general
nonlinearities, for concreteness we restrict our attention to
quadratic nonlinearities of the form
\begin{subequations}\label{nonlinF}
\begin{equation}
F(\nabla\phi):=\Ga\, g(\nabla\phi,\nabla\phi)\,,
\end{equation}
where the function $\Ga$ behaves in a neighborhood of the conformal
boundary as
\begin{equation}
\Ga=x^q \,\widehat\Ga(t,x,\te)\,,
\end{equation}
\end{subequations}
where $q$ is a large enough power and the function $\widehat\Ga$ is
smooth up to the conformal boundary. (Details will be given in Section~\ref{Nonlinappl}.) 
The reason to consider a nonlinearity quadratic in the first
derivatives is that the problem becomes a simplified version of the
Einstein equation in which, in particular, the tensorial structure is
disregarded. This is of particular relevance for the
proof a Lorentzian analog of the Graham--Lee theorem for
asymptotically AdS Einstein metrics, which will be considered elsewhere~\cite{EK}.

The proofs of these well-posedness results make up the substance of
the rest of our paper. First of all, in Section~\ref{S.defs} we
provide precise definitions of most of the concepts that we have
briefly described in this Introduction and devote Section~\ref{S.Sobolev} to
derive suitable characterizations and Sobolev and Moser inequalities
for the twisted Sobolev spaces that we use in this paper. These are
not direct consequences of the standard proofs, while Hardy-type
inequalities play an important role throughout. In
Section~\ref{S.elliptic} we prove estimates at infinity for the
elliptic part of the wave equation in an asymptotically AdS space (see
Theorem~\ref{T.ellip} and Corollary~\ref{C.ellip}). Analogous
estimates for asymptotically Minkowskian space-times, where the time
slices are asymptotically Euclidean, were derived by Christodoulou and
Choquet-Bruhat in~\cite{CC}.  The elliptic estimates are used in
Section~\ref{WaveProp} to derive energy estimates for the Cauchy
problem for the wave equation in an asymptotically AdS patch (see
Theorems~\ref{P.H1AdS} and~\ref{T.higherwave}). To deal with the data
on the conformal boundary, one uses an additional set of results
obtained in Section~\ref{S.boundary}, where the layers of the solution
that are large at infinity are ``peeled off'' (see
Theorem~\ref{T.large}) and the remaining part of the solution is
carefully controlled. In Section~\ref{S.LW} we finally discuss the
global structure of asymptotically AdS space-times and prove the global
well-posedness of the problem for the linear wave
equation~\eqref{LW}. The local well-posedness of the problem for the
nonlinear wave equation~\eqref{NLW} with the
nonlinearity~\eqref{nonlinF} is finally proved in Section~\ref{S.NLW}
using an iterative argument.

\section{Definitions and notation}\label{DefNot}
\label{S.defs}

In order to describe the geometry of an asymptotically AdS
space-time at infinity (i.e., in a neighborhood of an end), in this section
we define the notion of an asymptotically AdS patch. Most of our work
in this paper will take place in asymptotically AdS patches. In this
section we also introduce twisted Sobolev spaces which are adapted by
definition to the boundary behavior at infinity of the metric in an
asymptotically AdS patch. We shall see in Sections~\ref{Ell} and ~\ref{WaveProp} that these function spaces are well adapted to the study of the the elliptic estimates at infinity and the energy estimates for wave propagation which are needed to prove the well-posedness of the holographic prescription problem.  

We begin by introducing some basic notation to describe the asymptotics of functions. Consider the manifold $\RR\times (0,1)\times \SS^{n-2}$ with coordinates $(t,x,\te)$. We shall commit a slight abuse of notation and think if the ``coordinate'' $\te$ as taking 
values in $\SS^{n-2}$. Here we say that some quantity $Q(t,x,\te)$ is
of order $\cO(x^m)$ if there exist constants $ C_{jkl}$ such that
\[
\big| \pd_t^j\pd_x^kD_\te^lQ(t,x,\te)\big|\leq C_{jkl}\, x^{m-k}
\]
for $x$ close to $0$, uniformly for all
$(t,\te)\in\RR\times\SS^{n-2}$. Notice that, as is customary when
considering functions on a manifold, the angular derivatives $D_\te^l u$
must be interpreted either using local coordinates in the obvious way
or, more intrinsically, using tangent vector fields, but we will
omit this point whenever we find it notationally convenient. A similar
abuse of notation will be often made when dealing with Sobolev spaces
as in Eq.~\eqref{bH} below.

With this notation in place, we are now ready to define the concept of
an asympotically AdS patch. For convenience, we will include in the definition a small
parameter $a$ that describes the width of the patch. 

\begin{definition}\label{D.aAdS}
  As {\em asymptotically AdS patch}\/ (of width $a$) in a Lorentzian manifold $\cM$ is an open set $\cU\subset \cM$
  with smooth boundary that is covered by coordinates
\[
(t,x,\te)\in\RR\times (0,a)\times \SS^{n-2}
\]
in which the coefficients of the metric are smooth and read as
\begin{gather*}
g_{tt}=-x^{-2}-1+\cO(x^2)\,,\qquad g_{xx}=x^{-2}-1+\cO(x)\,,\qquad
g_{tx}=\cO(x)\,,\\
g_{\te^i\te^j}=x^{-2}(g_{\SS^{n-2}})_{ij}+\cO(x)\,, \qquad g_{t\te^i}=\cO(x)\,,\qquad g_{x\te^i}=\cO(x^2)\,,
\end{gather*}
where $g_{\SS^{n-2}}$ stands for the metric of the unit $(n-2)$-sphere. 
\end{definition}

In an asymptotically AdS patch, the wave equation~\eqref{LW}
can be written, after dividing by the coefficient of
$\phi_{tt}$ (which is of the form $x^2+\cO(x^4)$), in the form
\begin{multline}\label{largo}
  -\pd_{t}^2\phi+\pd_{x}^2\phi-\frac{n-2}x\pd_x\phi
  +\De_\te\phi-\frac\mu{x^2}\phi= \cO(x)\, D_{tx\te}\phi +\cO(x^2)\,
  D_{x\te}D_{tx\te}\phi\,,
\end{multline}
where the symbols $D_{x\te}$ and $D_{tx\te}$ stand for the 
derivatives of~$\phi$ with respect to all the space and space-time
variables, respectively. 

Following~\cite{Warnick}, it will be convenient to introduce the
function
\begin{equation}\label{phiu}
u:=x^{\frac{1-n}2}\phi\,,
\end{equation}
in terms of which Eq.~\eqref{largo} becomes
\begin{equation}\label{Pg}
P_gu=0\,,
\end{equation}
where $P_g$ is a second-order differential operator of the form
\begin{equation}\label{eq-u}
P_gu:=-\pd_{t}^2u+\pd_{x}^2u+\frac1x\pd_x u
+\De_\te u-\frac{\al^2}{x^2} u+ \cO(x)\, D_{tx\te}u +\cO(x^2)\,
  D_{x\te}D_{tx\te}u\,.
\end{equation}
% where $\cE$ is a second-order differential operator of the form
% \begin{equation}\label{cE}
% \cE u=\cO(x)\, D_{tx\te}u +\cO(x^2)\,
%   D_{tx\te}^2u\,.
% \end{equation}
Let us recall that the constant $\al$ was introduced in
Eq.~\eqref{al} and record for future reference that the precise
relationship between $\square_g\phi$ and $P_gu$ is
\begin{equation}\label{BoxPg}
\square_g\phi-\mu\phi=\big[1+\cO(x^2)\big] x^{\frac{n+3}2}\, P_g u\,.
\end{equation}
A key observation, due to Warnick~\cite{Warnick}, is
that the terms of $P_gu$ that are dominant for small $x$ can be rewritten as
\begin{equation*}%\label{eq-u2}
-\pd_{t}^2u+\pd_{x}^2u+\frac{\pd_x u}x
+\De_\te u-\frac{\al^2u}{x^2}=-\pd_{t}^2u- \bD_x^*\bD_xu
+\De_\te u\,,
\end{equation*}
where the {\em twisted derivative}
\begin{equation}\label{twist}
\bD_xu:=x^{-\al}\pd_x\big(x^\al u\big)=u_x+\frac\al xu\,,
\end{equation}
is directly related to the ``renormalized energy'' considered by
Breitenlohner and Freedman~\cite{BF} and 
\[
\bD_x^*u:=-x^{\al-1}\pd_x\big(x^{1-\al} u\big)=-u_{x}+\frac{\al-1} xu\,,
\]
is the formal adjoint of $\bD_x$ with respect to the scalar product of
the space
\begin{equation}\label{weightL2}
\bL^2:= L^2\big((0,a)\times\SS^{n-2},x\, dx\, d\te\big)\,,
\end{equation}
where $d\te$ denotes the standard measure on the unit sphere.
It should be noticed that, on account of the relationship between $\phi$ and $u$
and of the coordinate expression for the metric in an asymptotically AdS
patch, the $L^2$ norm of $\phi$ with respect to the natural space-time measure,
\[
d\mathrm{vol}_{\cM}:=\sqrt{|\det g|}\, dx\, d\te^1 \cdots
d\te^{n-2}\, dt\,,
\]
is equivalent to the $L^2_t\bL^2$ norm of $u$ for small $a$, since
\begin{align*}
\int_{\cU}\phi^2\, d\mathrm{vol}_{\cM}&=\int_{\RR\times(0,a)\times\SS^{n-2}}\big(1+\cO(x)\big)\, u^2\, x\, dx\,
d\te\, dt\\ 
&=\big(1+\cO(a)\big)\int_{-\infty}^{\infty}\|u(t,\cdot)\|_{\bL^2}^2\, dt\,.
\end{align*}

To control functions defined on an asymptotically AdS patch, we will consider Sobolev spaces associated with the twisted derivatives, which, setting
$\bH^0\equiv \bL^2$, can be recursively defined as
\begin{subequations}\label{bH}
\begin{align}
\bH^{2j+1}&:=\big\{ v\in \bH^{2j}: D_\te v\in
\bH^{2j}\,,\; \bD_x(\bD_x^*\bD_x)^jv\in \bL^2\big\}\,,\\
\bH^{2j+2}&:=\big\{ v\in \bH^{2j+1}: D_\te v\in
\bH^{2j+1}\,,\; (\bD_x^*\bD_x)^{j+1}v\in \bL^2\big\}\,,
\end{align}
\end{subequations}
with $j\geq0$. Notice that the functions in $\bH^k$ are defined on
$(0,a)\times\SS^{n-2}$, which corresponds to the spatial part of an
asymptotically AdS patch $\cU$. The norm associated to $\bH^k$ (resp.\
the Lebesgue space $\bL^2$) will be denoted by $\|\cdot\|_{\bH^k}$
(resp.\ $\|\cdot\|_{\bL^2}$).

It should be noted that the $\bH^{2}$ twisted Sobolev spaces were first introduced and studied in the case $\alpha<1$ by Warnick~\cite{Warnick}.

We will also need Sobolev spaces corresponding to Dirichlet boundary
conditions. We will denote by $\bH_0^1$ the twisted Sobolev space on
$(0,a)\times\SS^{n-2}$ with zero trace on the inner and outer
boundaries $\{x=0\}\cup\{x=a\}$. We shall not elaborate on the
properties of the trace map here, since in the following section we
will give a more direct characterization of these spaces (see
Proposition~\ref{P.vDv} and Corollary~\ref{C.density}).

\section{Inequalities for twisted Sobolev spaces}\label{SobIneq}
\label{S.Sobolev}

Our goal in this section is to derive inequalities for the twisted
Sobolev spaces defined above using some integral operators $A$ and $A^*$ which act as inverses of the twisted derivatives $\bD_x$ and $\bD_x^*$.  This will be done via Hardy-type inequalities, which we now derive.

Consider the integral operator $A$ defined by
\[
A\vp(x):=x^{-\al}\int_0^x y^\al \vp(y)\, dy\,.
\]
An immediate observation is that $A$ is a right inverse of the twisted
derivative $\bD_x$, that is,
\[
\bD_x(A\vp)=\vp\,.
\]
We will be concerned with the mapping properties of $A$ for several
weighted Lebesgue spaces.  A special role will be played by the space
\[
\bL^2_x:=  L^2\big((0,a),x\, dx\big)\,.
\]
The reason for which we will be interested in this space is that we
have an obvious decomposition of $\bL^2$ of the form 
\[
\bL^2=\bL^2_xL^2_\te\,,
\]
where $L^2_\te:= L^2(\SS^{n-2})$ is the usual $L^2$ space on the
sphere. Therefore, the space $\bL^2_x$ can be interpreted to some
extent as the ``radial'' part of the space of square-integrable
functions in (the spacial part of) an asymptotically AdS patch.

The adjoint of $A$ with respect to the inner product of $\bL^2_x$ is given by
\[
A^*\vp(x):=x^{\al-1}\int_x^a y^{1-\al}\vp(y)\, dy\,.
\]
One can easily check that $A^*$ is a right inverse of $\bD_x^*$, that is,
\[
\bD_x^*(A^*\vp)=\vp\,.
\]

In the next theorem, we derive estimates for $A\vp$ and $A^*\vp$ assuming that we have 
$\bL^2_x$ bounds on $\vp$. In some results that we will prove later
on, it will be convenient to make explicit the dependence on the parameter $a$
of a few upper bounds. Therefore, here and in what follows we will use the
capital letter $C$ to denote a positive constant that can depend on
the parameter~$a$ and the lower case $c$ to denote a positive,
$a$-independent constant. As customary, both constants may vary from
line to line.

\begin{theorem}\label{T.estA}
For any reals $s<\al$ and $r\geq s$, the operators $A,A^*$ have the
following properties:
\begin{subequations}
\begin{align}
\|x^{r-1}A\vp\|_{\bL^2_x}&\leq ca^{r-s}\|x^s\vp\|_{\bL^2_x}\,,\label{AL2}\\[1mm]
\|A\vp\|_{L^\infty_x}&\leq c\|\vp\|_{\bL^2_x}\,,\label{ALinf}\\[1mm]
\|x^{-s}A^*\vp\|_{\bL^2_x}&\leq
ca^{r-s}\|x^{1-r}\vp\|_{\bL^2_x}\,,\label{A*L2}\\[1mm]
|A^*\vp(x)|&\leq c\|\vp\|_{\bL^2_x}\times\begin{cases}
1 &\text{if }\al>1\,,\\
\log(a/x) &\text{if }\al=1\,,\\
(a/x)^{1-\al}&\text{if }\al<1\,.
\end{cases}\label{A*Linf}
\end{align}
\end{subequations}
Notice, in particular, that for $\al>1$ one
has $\|A^*\vp\|_{L^\infty_x}\leq c \|\vp\|_{\bL^2_x}$.
\end{theorem}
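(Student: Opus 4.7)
These estimates are Hardy-type inequalities for the integral operators $A$ and $A^*$. I would split the proof into two groups: the $L^\infty$ bounds (ii) and (iv), which follow from Cauchy--Schwarz applied directly to the defining integrals with respect to the natural measure $y\,dy$; and the weighted $\bL^2_x$ bounds (i) and (iii), which need a scaling substitution combined with Minkowski's integral inequality.

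\textbf{Pointwise bounds.} For (ii), Cauchy--Schwarz gives
\[
|A\vp(x)|\leq x^{-\al}\bigg(\int_0^x y^{2\al-1}\,dy\bigg)^{1/2}\bigg(\int_0^x y|\vp(y)|^2\,dy\bigg)^{1/2}=\frac{1}{\sqrt{2\al}}\,\|\vp\|_{\bL^2_x},
\]
the integral converging at $0$ because $\al>0$. The same manipulation applied to $A^*$ yields
\[
|A^*\vp(x)|\leq x^{\al-1}\bigg(\int_x^a y^{1-2\al}\,dy\bigg)^{1/2}\|\vp\|_{\bL^2_x},
\]
and the three sub-cases in (iv) correspond to the three possible behaviours of the integral $\int_x^a y^{1-2\al}\,dy$: uniformly bounded when $\al>1$, logarithmic in $a/x$ when $\al=1$, and of order $a^{2-2\al}$ when $\al<1$ (which combines with the prefactor $x^{\al-1}$ to produce the $(a/x)^{1-\al}$ growth).

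\textbf{Weighted $\bL^2_x$ bounds.} Here a purely pointwise Cauchy--Schwarz estimate is insufficient: inserting the pointwise bound $|A\vp(x)|\leq c\,x^{-s}\|x^s\vp\|_{\bL^2_x}$ into $\|x^{r-1}A\vp\|_{\bL^2_x}^2$ in the critical case $r=s$ would produce the divergent integral $\int_0^a x^{-1}\,dx$. To avoid this, I would first reduce to $r=s$ via the trivial bound $x^{r-s}\leq a^{r-s}$ on $(0,a)$, which ultimately produces the factor $a^{r-s}$ in the statement. For $r=s$, I would perform the scaling substitution $y=xt$ to rewrite
\[
A\vp(x)=x\int_0^1 t^\al\vp(xt)\,dt,\qquad A^*\vp(x)=x\int_1^{a/x}t^{1-\al}\vp(xt)\,dt,
\]
(extending $\vp$ by zero outside $(0,a)$ in the second identity), and then apply Minkowski's integral inequality in $\bL^2_x$. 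A change of variables on the inner norm yields the dilation estimates $\|x^s\vp(\cdot t)\|_{\bL^2_x}\leq t^{-s-1}\|x^s\vp\|_{\bL^2_x}$ and $\|x^{1-s}\vp(\cdot t)\|_{\bL^2_x}\leq t^{s-2}\|x^{1-s}\vp\|_{\bL^2_x}$. Substituting these into Minkowski's bound reduces the proof to the convergence of the dilation integrals $\int_0^1 t^{\al-s-1}\,dt$ and $\int_1^\infty t^{s-\al-1}\,dt$, both of which converge precisely under the hypothesis $s<\al$ and each contribute a factor proportional to $1/(\al-s)$.

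\textbf{Main obstacle.} The main subtlety to watch for is recognising that Minkowski must be applied \emph{before} squaring: the naive route of squaring the pointwise Cauchy--Schwarz bound and then integrating fails in exactly the range of weights where the theorem is sharpest, and this is what forces the scaling/Minkowski scheme. Once that scheme is in place, the convergence condition $s<\al$ emerges naturally as the integrability threshold for the dilation kernel, and the factor $a^{r-s}$ comes for free from the elementary monotonicity $x\leq a$.
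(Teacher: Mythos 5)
Your proposal is correct, and the pointwise bounds (ii) and (iv) are handled exactly as in the paper (direct Cauchy--Schwarz against the weight $y\,dy$). However, your treatment of the weighted $\bL^2_x$ estimates (i) and (iii) takes a genuinely different route. The paper proves~\eqref{A*L2} by the classical one-line Hardy trick: writing $\psi(x)=\int_x^a y^{1-\al}\vp(y)\,dy$, integrating $\int_0^a x^{2\al-2s-1}\psi^2\,dx$ by parts (using $\psi(a)=0$), and then applying Cauchy--Schwarz to the resulting cross term, which produces the factor $a^{r-s}/(\al-s)$ in one stroke; the estimate~\eqref{AL2} is then obtained by duality, since $A^*$ is the adjoint of $A$ in $\bL^2_x$. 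You instead reduce to the critical case $r=s$ by the crude bound $x^{r-s}\le a^{r-s}$, rescale $y=xt$ to bring $A$ and $A^*$ into dilation-kernel form, and apply Minkowski's integral inequality together with the homogeneity $\|x^s\vp(\cdot\,t)\|_{\bL^2_x}\le t^{-s-1}\|x^s\vp\|_{\bL^2_x}$ (and its counterpart for $A^*$). This is essentially the Schur test for the Hardy operator, and it treats $A$ and $A^*$ symmetrically without invoking duality; the convergence condition $s<\al$ appears as the integrability threshold of $\int_0^1 t^{\al-s-1}\,dt$ and $\int_1^\infty t^{s-\al-1}\,dt$, yielding the same constant $1/(\al-s)$ that the paper obtains algebraically. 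Both arguments are standard and correct; the paper's is slightly more compact once the integration-by-parts identity is seen, while yours is arguably more systematic and makes the role of the dilation structure explicit, which is useful if one wants to vary the exponents or the measure.
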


\begin{proof}
  Let us begin by proving the inequality~\eqref{A*L2}, since the fact
  that $A^*\vp(a)=0$ simplifies the integration by parts. In view of
  the formula for $A^*$, we need the Hardy inequality
\[
\int_0^ax^{2\al-2s-1}\bigg(\int_x^ay^{1-\al}\vp(y)\,
dy\bigg)^2dx\leq c a^{2r-2s}\int_0^a x^{3-2r}\vp(x)^2\, dx\,.
\]
To prove this, let us call $J^2$ the LHS of this inequality and set
\[
\psi(x):=\int_x^ay^{1-\al}\vp(y)\,dy\,.
\]
Then integrating by parts and using the Cauchy--Schwarz inequality we
find
\begin{align*}
J^2&=\int_0^ax^{2\al-2s-1}\psi^2\, dx
% =\frac1{2\al-2s}\int_0^a
% \psi^2(x^{2\al-2s})'\, dx
=\frac1{\al-s}\int_0^a \psi\,\vp\, x^{\al-2s+1}\,
dx\\
&=\frac1{\al-s}\int_0^a x^{r-s} \, (x^{\al-s-\frac12}\psi)\,
(x^{\frac32-r}\vp)\, dx\\
&\leq \frac{a^{r-s}}{\al-s}\, J\, \bigg(\int_0^a x^{3l-2r}\vp^2\, dx\bigg)^{1/2}\,,
\end{align*}
where in the second and fourth lines we have used that $s<\al$ and $r\geq
s$, respectively. This inequality is~\eqref{A*L2}.

Since this implies that
\[
A^*: L^2(x^{2-2r}\, x\, dx)\to L^2(x^{-2s}\, x\, dx)
\]
is bounded, by duality it stems that the adjoint with respect to
$\bL^2_x=L^2(x\, dx)$ is a bounded map
\[
A: L^2(x^{2s}\, x\, dx)\to L^2(x^{2r-2}\, x\, dx)
\]
with the same norm, which immediately yields~\eqref{AL2}.

Let us now pass to the pointwise bounds. To prove~\eqref{A*Linf} we
utilize the Cauchy-Schwarz inequality to write (for $\al\neq1$)
\begin{align*}
  \big|A^*\vp(x)\big|&=x^{\al-1}\bigg|\int_x^ay^{1-\al}\vp(y)\,
  dy\bigg|\\
  & \leq x^{\al-1}\bigg(\int_x^ay^{1-2\al}\,
  dy\bigg)^{1/2}\bigg(\int_x^ay\, \vp(y)^2\, dy\bigg)^{1/2}\\
&\leq \|\vp\|_{\bL^2_x}\bigg[\frac1{2-2\al}\bigg(\bigg(\frac ax\bigg)^{2-2\al}-1\bigg)\bigg]^{1/2}\,.
\end{align*}
The estimate~\eqref{A*Linf} follows from this
inequality and the elementary observation
\[
\bigg|\bigg(\frac ax\bigg)^{2-2\al}-1\bigg|^{1/2}\leq \begin{cases}
1 &\text{if } \al>1\,,\\
(a/x)^{1-\al}  &\text{if } \al<1\,.
\end{cases}
\]
The case $\al=1$ and the inequality~\eqref{ALinf} follow from an
analogous argument.
\end{proof}

In the following proposition we provide pointwise bounds for $A\vp$
and $A^*\vp$ under the assumption that we have pointwise bounds for
$\vp$. Our goal here is to relate the fall off of the former at $x=0$ to that
of the latter using power laws. We shall use the notation $x\wedge y:=\min(x,y)$.

\begin{proposition}\label{P.Apoint}
Let $\vp,\psi$ be functions satisfying
\[
|\vp(x)|\leq C_1x^s\quad\text{and} \quad |\psi(x)|\leq C_2x^{r}\,,
\]
with $s>-1-\al$ and an arbitrary real $r$. Then $A\vp$ and $A^*\vp$  obey the pointwise
bounds
\begin{align*}
\big|A\vp(x)\big|&\leq CC_1x^{s+1}\,,\\
\big|A^*\psi(x)\big|&\leq C C_2 x^{(r+1)\wedge (\al-1)}\,,
\end{align*}
where $C$ does not depend on the functions $\vp$ or $\psi$.
\end{proposition}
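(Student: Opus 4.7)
The proof proposal is a direct computation using the explicit integral formulas for $A$ and $A^*$, exactly in the spirit of the estimates already obtained in Theorem~\ref{T.estA}, but now tracking pointwise power-law behavior rather than $\bL^2_x$ norms.

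For the bound on $A\vp$, I would simply substitute the pointwise hypothesis $|\vp(y)|\leq C_1 y^s$ into the definition of $A\vp$ and compute
\[
\big|A\vp(x)\big|\leq x^{-\al}C_1\int_0^x y^{\al+s}\, dy\,.
\]
The hypothesis $s>-1-\al$ is precisely what is needed to make the integral converge near $y=0$, and evaluating yields $|A\vp(x)|\leq \frac{C_1}{\al+s+1}\, x^{s+1}$, which is the claimed bound.

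For the bound on $A^*\psi$, I would analogously start from
\[
\big|A^*\psi(x)\big|\leq x^{\al-1}C_2\int_x^a y^{1-\al+r}\, dy\,.
\]
Here the value of the integral depends on the sign of the exponent $2-\al+r$, so I would split into cases. If $2-\al+r>0$ (equivalently $r+1>\al-1$), one has $\int_x^a y^{1-\al+r}\, dy \leq \frac{1}{2-\al+r}\, a^{2-\al+r}$, which gives the decay $x^{\al-1}$ and matches the min in the statement. If $2-\al+r<0$ (equivalently $r+1<\al-1$), then the antiderivative evaluated at the lower limit dominates, producing $|A^*\psi(x)|\leq C C_2\, x^{\al-1}\cdot x^{2-\al+r}=CC_2\, x^{r+1}$, again matching the min. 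These two cases together yield the bound $|A^*\psi(x)|\leq CC_2\, x^{(r+1)\wedge(\al-1)}$.

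The only real snag is the borderline case $r=\al-2$, where the integral produces a logarithmic factor $\log(a/x)$; the main obstacle of the proof is essentially just this bookkeeping. I would handle it by absorbing the log into an arbitrarily small loss of exponent (using $\log(a/x)\leq C_\ep (a/x)^\ep$ for $\ep>0$) and then noting that since $(r+1)\wedge(\al-1)=\al-1$ in this critical case, one can trade a tiny power of $x$ against the logarithm to recover a clean power-law estimate with a possibly larger constant $C$, independent of $\vp$ and $\psi$ as required.
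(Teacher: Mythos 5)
Your proposal is correct and follows essentially the same route as the paper: substitute the pointwise hypotheses into the integral formulas for $A$ and $A^*$ and evaluate, with the hypothesis $s>-1-\al$ guaranteeing convergence of the $A\vp$ integral at the origin. One small caution: in the borderline case $r=\al-2$ your suggested fix does not actually recover the stated bound, since absorbing $\log(a/x)$ into $x^{-\ep}$ yields $x^{\al-1-\ep}$ rather than $x^{\al-1}=(r+1)\wedge(\al-1)$; the paper's own proof simply writes the antiderivative as if $r-\al+2\neq0$ and ignores this edge case entirely, so your awareness of it is a point in your favor even though the proposed patch is slightly off.
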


\begin{proof}
It is easy to find that
\[
\big|A\vp(x)\big|\leq x^{-\al}\int_0^xC_1y^{\al+s}\, dy=CC_1x^{s+1}\,.
\]
Likewise,
\begin{align*}
\big|A^*\psi(x)\big|&\leq C_2x^{\al-1}\int_x^ay^{r-\al+1}\,
dy\\
&=CC_2x^{\al-1}\big(a^{r-\al+2}-x^{r-\al+2}\big)\leq CC_2x^{(r+1)\wedge(\al-1)}\,.
\end{align*}
\end{proof}

%\subsection*{Estimates for functions in $\bH^k$}

In the rest of this section we shall see how the previous estimates
for the operators $A$ and $A^*$ are of use in the analysis of the
Sobolev spaces $\bH^k$. Hence we now focus our attention on the action of the operators $A,A^*$ not only on
functions of the variable $x$, but also on functions that depend on
$x$ and $\te$. An easy result that we will need later on
is the following. We recall that we are using the
shorthand notation $L^2_\te$ for the space of square-integrable functions on $\SS^{n-2}$.

\begin{proposition}\label{P.vDv}
Let $v(x,\te)$ be a function in $\bL^2$ such that $\bD_x v$ is also is
$\bL^2$. Then 
\[
v=\begin{cases}
A(\bD_xv) &\text{if }\al\geq 1\,,\\
A(\bD_xv)+x^{-\al}\ms G(\te) &\text{if }\al<1\,.
\end{cases}
\]
Here $G(\te)$ is some function in $L^2_\te$, which corresponds
to the trace of the function $x^\al v$ on the set $\{x=0\}$ and
satisfies
\[
\|G\|_{L^2_\te}\leq C\big(\|v\|_{\bL^2}+\|\bD_xv\|_{\bL^2}\big)\,.
\]
If $w(x,\te)$ is a function in $\bL^2$ such that $\bD_x^*w\in\bL^2$,
then
\[
w=A^*(\bD_xw)+x^{\al-1}\ms H(\te)
\]
for some function $H(\te)\in L^2_\te$ with
\[
\|H\|_{L^2_\te}\leq C\big(\|w\|_{\bL^2}+\|\bD_x^*w\|_{\bL^2}\big)\,.
\]
\end{proposition}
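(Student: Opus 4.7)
The plan is to isolate the kernel of $\bD_x$ (respectively $\bD_x^*$) inside the class of $\bL^2$ functions whose distributional twisted derivative lies in $\bL^2$, and then read off the trace $G$ (respectively $H$) from the unique $x$-behavior that functions in that kernel can display. The basic algebraic fact is that $A$ is a right inverse of $\bD_x$ and $A^*$ is a right inverse of $\bD_x^*$; although these identities are verified pointwise only for smooth $\vp$, they extend to all $\vp\in\bL^2_x$ in the sense of distributions because $A$ and $A^*$ are bounded on $\bL^2_x$ by Theorem~\ref{T.estA} with $r=1$, $s=0$ (recall $\al>0$).

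First I would set $\tilde v:=v-A(\bD_x v)$, which lies in $\bL^2$ since $\bD_x v\in\bL^2$ and $A$ is bounded on $\bL^2_x$. By the previous observation, $\bD_x\tilde v=0$ in $\cD'((0,a)\times\SS^{n-2})$, which is equivalent to $\pd_x(x^\al\tilde v)=0$. A standard slicing/Fubini argument (testing against $\eta(x)\chi(\te)$ with $\chi$ smooth on $\SS^{n-2}$ and $\eta\in C_c^\infty(0,a)$) then shows that $x^\al\tilde v$ is independent of $x$ modulo null sets, producing a measurable function $G(\te)$ with $\tilde v(x,\te)=x^{-\al}G(\te)$ a.e.

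Next the square-integrability of $\tilde v$ yields the stated dichotomy. Indeed
\begin{equation*}
\|\tilde v\|_{\bL^2}^2 \;=\; \|G\|_{L^2_\te}^2 \int_0^a x^{1-2\al}\,dx.
\end{equation*}
For $\al\geq1$ the radial integral diverges, forcing $G\equiv0$ and $v=A(\bD_x v)$; for $\al<1$ it equals $a^{2-2\al}/(2-2\al)$, so $G\in L^2_\te$ with
\begin{equation*}
\|G\|_{L^2_\te}\leq c\,a^{\al-1}\|\tilde v\|_{\bL^2}\leq c\,a^{\al-1}\bigl(\|v\|_{\bL^2}+\|A(\bD_x v)\|_{\bL^2}\bigr)\leq C\bigl(\|v\|_{\bL^2}+\|\bD_x v\|_{\bL^2}\bigr),
\end{equation*}
the last inequality again using Theorem~\ref{T.estA}. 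The statement for $w$ is the mirror image: $\tilde w:=w-A^*(\bD_x^*w)$ satisfies $\pd_x(x^{1-\al}\tilde w)=0$, hence $\tilde w=x^{\al-1}H(\te)$, and because $\int_0^a x^{2\al-1}\,dx=a^{2\al}/(2\al)$ is finite for \emph{every} $\al>0$, no dichotomy arises: $H\in L^2_\te$ always, with the bound $\|H\|_{L^2_\te}\leq C(\|w\|_{\bL^2}+\|\bD_x^* w\|_{\bL^2})$ following by the same estimate.

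The main subtlety lies in the slicing step. The product $x^\al\tilde v$ need not be in $\bL^2$ itself — it lives in the $\al$-dependent weighted space $L^2(x^{1-2\al}dx\,d\te)$ — so one must take some care to justify via Fubini that for a.e.\ $x$ the slice $\tilde v(x,\cdot)$ coincides with a single element of $L^2(\SS^{n-2})$ and that this element is independent of $x$. Once this measure-theoretic point is in place, the rest of the proof is a straightforward application of the mapping properties provided by Theorem~\ref{T.estA}.
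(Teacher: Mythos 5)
Your proof is correct and follows essentially the same route as the paper: decompose $v$ as $A(\bD_x v)$ plus a kernel element of $\bD_x$, identify the kernel as $x^{-\al}G(\te)$, and use square-integrability to force $G\equiv 0$ when $\al\geq 1$, with the norm bound coming from Theorem~\ref{T.estA}. The only difference is that you spell out the distributional/Fubini justification for the ODE step that the paper's proof states without comment; the norm estimate and the mirror argument for $w$ are the same.
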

\begin{proof}
Since $A$ is a right inverse of $\bD_x$ and $A$ maps $\bL^2$ to itself
by Proposition~\ref{T.estA}, the expression for $v$
follows from the fact that the only functions in the kernel of $\bD_x$
are those of the form $x^{-\al}G(\te)$, and that they are in $\bL^2$
if and only if $\al<1$. The estimate for the norm of $G$ follows from
\begin{align*}
\| x^{-\al}G\|_{\bL^2}&=\frac{a^{1-\al}}{\sqrt{2-2\al}}
\|G\|_{L^2_\te}\\
&\leq \|v \|_{\bL^2}+\|A(\bD_xv) \|_{\bL^2} \\
&\leq
\|v \|_{\bL^2}+ C\|\bD_xv \|_{\bL^2} \,,
\end{align*}
where we have used~\eqref{AL2} to estimate the norm of $A(\bD_x
v)$. By construction, $G(\te)$ is the trace of
$x^\al \,v(x,\te)$ on $\{x=0\}$. The proof of the formula for $w$
follows the same lines.
\end{proof}

\begin{remark}
  Notice that, for $\al<1$, a function $v\in\bH^1$ with zero trace on $\{x=a\}$ is in $\bH^1_0$ if
  and only if the function $G(\te)$ considered in
  Proposition~\ref{P.vDv} is zero, that is, if $v=A(\bD_xv)$. When $v\in\bH^1$ one can show
  that the trace $G(\te)$ to $\{x=0\}$ actually belongs to $H^{\al}(\SS^{n-2})$,
  but we will not need this fact.
\end{remark}

Since $C^\infty\cpt((0,a)\times\SS^{n-2})$ is clearly dense in
$\bL^2$, a corollary of the previous proposition is the following
characterization  of the twisted Sobolev space $\bH^1_0$ with zero
trace. We recall that a function is differentiable in a closed
interval if it differentiable in a open interval containing it.

\begin{corollary}\label{C.density}
  The Sobolev space $\bH_0^1$ is the completion in the $\bH^1$-norm of the space
  $C^\infty\cpt((0,a)\times\SS^{n-2})$ and any $v\in\bH^1_0$ satisfies
\begin{equation}\label{L2H1}
\|v\|_{\bL^2}\leq ca\|\bD_xv\|_{\bL^2}\,,
\end{equation}
where the constant $c$ is independent of $a$. For $\al<1$, the space
  $x^{-\al}\, C^\infty\cpt([0,a)\times\SS^{n-2})$ is dense in the space of
  $\bH^1$ functions with zero trace on $\{x=a\}$. 
\end{corollary}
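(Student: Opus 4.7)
The proof naturally splits into three parts: the Poincar\'e-type inequality~\eqref{L2H1}, the density of $C^\infty\cpt((0,a)\times\SS^{n-2})$ in $\bH^1_0$, and, for $\al<1$, the density of $x^{-\al}\,C^\infty\cpt([0,a)\times\SS^{n-2})$ in the space of $\bH^1$ functions with zero trace at $\{x=a\}$. The first is essentially immediate: for $v\in\bH^1_0$ the vanishing of the trace at $\{x=0\}$ forces the function $G(\te)$ of Proposition~\ref{P.vDv} to be zero (automatic when $\al\geq 1$, since $x^{-\al}G\notin\bL^2$ in that range, and by definition of $\bH^1_0$ when $\al<1$), so $v=A(\bD_x v)$. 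Slice-by-slice application of the estimate~\eqref{AL2} with $s=0$ and $r=1$, followed by integration in $\te$, yields $\|v\|_{\bL^2}\leq ca\|\bD_x v\|_{\bL^2}$ with an $a$-independent constant.

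For density of $C^\infty\cpt$ in $\bH^1_0$, I proceed by a two-step cutoff and mollification. Given $v\in\bH^1_0$, pick $\eta_\ep\in C^\infty([0,a])$ that vanishes on $[0,\ep]$, equals $1$ on $[2\ep,a]$, and has $|\eta'_\ep|\leq C/\ep$. Since $\bD_x(\eta_\ep v)=\eta'_\ep v+\eta_\ep\bD_x v$, the only problematic term is $\eta'_\ep v$. The representation $v=A(\bD_x v)$ together with Cauchy--Schwarz yields, pointwise in $\te$ and after integration,
\[
\int_{\SS^{n-2}}|v(x,\te)|^2\,d\te\leq\frac{h(x)}{2\al}\,,\qquad h(x):=\int_{\SS^{n-2}}\!\int_0^x|\bD_x v|^2\,y\,dy\,d\te\,,
\]
with $h(x)\to 0$ as $x\to 0$, so $\|\eta'_\ep v\|_{\bL^2}^2\leq Ch(2\ep)\to 0$. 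Dominated convergence handles the remaining contributions, so $\eta_\ep v\to v$ in $\bH^1$. On the support $\{x\geq\ep\}$ of $\eta_\ep v$, the twist $\al/x$ is bounded and $\bH^1$ is equivalent to the standard $H^1$, so classical $H^1_0$ density (a cutoff near $x=a$ based on $v|_{x=a}=0$, then mollification) provides an approximation by $C^\infty\cpt((0,a)\times\SS^{n-2})$ functions; a diagonal extraction completes the argument.

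For the third claim (with $\al<1$), I would use the isometric change of variable $w:=x^\al v$, under which $\bD_x v=x^{-\al}\pd_x w$ and $D_\te v=x^{-\al}D_\te w$. The subspace of $\bH^1$ consisting of functions vanishing at $\{x=a\}$ is thus isometric to the weighted Sobolev space of $w$ with $w,\pd_x w,D_\te w\in L^2(x^{1-2\al}\,dx\,d\te)$ and $w|_{x=a}=0$, and $x^{-\al}\,C^\infty\cpt([0,a)\times\SS^{n-2})$ corresponds under this identification to $C^\infty\cpt([0,a)\times\SS^{n-2})$. The condition $\al<1$ is precisely what makes $x^{1-2\al}$ locally integrable on $[0,a]$ (equivalently, it lies in the Muckenhoupt $A_2$ class), and a cutoff near $x=a$ followed by mollification in both variables (using e.g.\ an even reflection across $x=0$) yields the required density. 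The main technical point is the cutoff near $x=0$ in the second step: the potentially singular term $\eta'_\ep v$ is tamed precisely by the vanishing-at-$\{x=0\}$ condition encoded in the representation $v=A(\bD_x v)$.
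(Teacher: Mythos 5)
Your Poincar\'e inequality argument coincides with the paper's (both rest on the representation $v=A(\bD_x v)$ from Proposition~\ref{P.vDv} and the mapping bound~\eqref{AL2}), but the two density arguments go by genuinely different routes. The paper works ``from the derivative side'': it approximates $\bD_x v$ (and, in the $\al<1$ case, the trace $G(\te)$) by smooth compactly supported data and then applies the right inverse $A$, so that the approximants are automatically in the kernel-free image of $A$ and vanish near $x=0$ by construction. You instead cut off and mollify $v$ directly: the key step is showing that the commutator term $\eta'_\ep v$ produced by the cutoff is small, and you accomplish this by converting $v=A(\bD_x v)$ into the pointwise Hardy bound $\int_{\SS^{n-2}}|v(x,\te)|^2\,d\te\le h(x)/(2\al)$ with $h(x)\to 0$, after which the truncated function lives in a region where $\bH^1$ is equivalent to ordinary $H^1$ and classical density applies. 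For the $\al<1$ clause you go further afield, conjugating by $x^\al$ to recast the problem as density of smooth functions in a Sobolev space weighted by the $A_2$ weight $x^{1-2\al}$, and appealing to even reflection plus mollification; the paper instead simply approximates $A(\bD_x u)$ and $G(\te)$ separately. Both routes are correct. Your approach is more self-contained (it does not lean on the $A$--$A^*$ machinery beyond establishing the Hardy bound, and the $A_2$ reformulation makes the role of the restriction $\al<1$ conceptually transparent), while the paper's is shorter because the operator $A$ manufactures approximants whose vanishing at $x=0$ is structural rather than enforced by a cutoff. One small point worth making explicit in your write-up: when you conclude the second step ``standard $H^1_0$ density'' applies on $\{x\ge\ep\}$, you should note that the equivalence constants between $\bH^1$ and $H^1$ there depend on $\ep$, which is harmless for the diagonal extraction but should be acknowledged.
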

\begin{proof}
  The bound of the $\bL^2$ norm of $v$ in terms of that of $\bD_xv$ is
  an immediate consequence of the expression $v=A(\bD_x v)$ and the
  estimates for $A$ proved in Theorem~\ref{T.estA}. Since
  $C^\infty\cpt((0,a)\times\SS^{n-2})$ is clearly dense in $\bL^2$,
  there is a smooth, compactly supported function $F(x,\theta)$ that
  approximates $\bD_xu$ in the $\bL^2$ norm. By the estimates proved
  in Theorem~\ref{T.estA} and the expression of $A$, it then follows
  that $A(F)$ is in $C^\infty\cpt((0,a]\times\SS^{n-2})$ and
  approximates $A(\bD_xu)$ in the $\bL^2$ norm.

  By Proposition~\ref{P.vDv}, for $\al\geq1$ this proves that
  $C^\infty\cpt((0,a]\times\SS^{n-2})$ is dense in the space of
  $\bH^1$ functions whose trace to $\{x=0\}$ is zero. Since the
  $\bH^1$-norm is equivalent to the standard $H^1$-norm in a
  neighborhood of the other component of the boundary, $\{x=a\}$, it
  then follows that $C^\infty\cpt((0,a)\times\SS^{n-2})$ is dense in
  $\bH^1_0$ for $\al\geq1$.  For $\al<1$, it suffices to write
\[
u=A(\bD_xu)+x^{-\al}G(\te)
\]
and combine the discussion of $A(\bD_xu)$ with the observation that
the function $G(\te)$ can be approximated by a smooth function $\tilde
G\in C^\infty(\SS^{n-2})$.
\end{proof}

It should be noted that in the case $\alpha<1$, Proposition~\ref{P.vDv} and Corollary~\ref{C.density} were proved by Warnick in~\cite{Warnick}.

To pass from estimates in twisted Sobolev spaces to pointwise estimates, the key is the
following Morrey-type inequality for $\bH^m$. It is worth noticing
that, although the functions in this space are defined on an
$(n-1)$-dimensional manifold, we will only get pointwise estimate for
$m<\frac n2$, instead of for $m<\frac{n-1}2$ as one would in standard
Sobolev spaces $H^m(\RR^{n-1})$. This can be understood to be a
consequence of the singular behavior of the measure and of the twist
factor at the conformal boundary. Likewise, we are interested in controlling the growth of functions
near the conformal boundary. Before stating the result, it is convenient to introduce some
notation for ordered twisted derivatives and write
\begin{equation}\label{D(m)}
\bD_x^{(m)}v:=\begin{cases}
(\bD_x^*\bD_x)^\frac m2v & \text{if } m \text{ is even},\\[1mm]
\bD_x (\bD_x^*\bD_x)^{\frac{m-1}2}v & \text{if } m \text{ is odd}.
\end{cases}
\end{equation}
This will be used throughout this paper.

\begin{theorem}\label{T.Morrey}
Let $v\in\bH^m$ with $m>\frac n2$. Then for any non-negative integers
$i,j$ with
\[
i+j\leq m-\frac n2\,,
\]
the following inequalities hold:
\begin{enumerate}
\item If $\al>1$,
\[
\big|D_\te^i\bD_x^{(j)} v(x,\te)\big|\leq C\, \|v\|_{\bH^m}\, x^{(m-j-1)\wedge \al} \,.
\]

\item If $\al<1$,
\[
\big|D_\te^i\bD_x^{(j)} v(x,\te)\big|\leq C\,
\|v\|_{\bH^m}\,  \begin{cases}
x^{-\al} & \text{if } j \text{ is even},\\
x^{\al-1} & \text{if } j \text{ is odd}.
\end{cases}
\]

\item If $\al=1$, the estimate in~(i) still holds after replacing $m$ by $m-\de$ in the
  exponent, with any $\de>0$.
\end{enumerate}
\end{theorem}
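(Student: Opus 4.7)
The plan is to combine the Sobolev embedding $H^p(\SS^{n-2})\hookrightarrow L^\infty(\SS^{n-2})$ for an integer $p>(n{-}2)/2$ with the radial tools developed in Section~\ref{S.Sobolev}: the $L^\infty_x$ bounds \eqref{ALinf} and \eqref{A*Linf}, the representation formulas of Proposition~\ref{P.vDv}, and the pointwise propagation of decay from Proposition~\ref{P.Apoint}. First I would reduce to $i=0$: since $D_\te$ commutes with both $\bD_x$ and $\bD_x^*$, an immediate induction on the definition \eqref{bH} gives $D_\te^i v\in\bH^{m-i}$ with $\|D_\te^i v\|_{\bH^{m-i}}\leq\|v\|_{\bH^m}$, so applying the bounds for $(0,j,m-i)$ to $D_\te^i v$ recovers the general case.

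With $i=0$ fixed, I would run a downward induction on $j$, using the identities supplied by Proposition~\ref{P.vDv} (valid for $\al\geq 1$):
\[
\bD_x^{(j)}v=\begin{cases} A\bigl(\bD_x^{(j+1)}v\bigr) & j\text{ even},\\ A^*\bigl(\bD_x^{(j+1)}v\bigr)+x^{\al-1}H_j(\te) & j\text{ odd},\end{cases}
\]
with trace $\|H_j\|_{L^2_\te}\leq C\|v\|_{\bH^m}$. At the top of the induction I would produce an $L^\infty$ bound on $\bD_x^{(j_*)}v$ without decay, by first using sphere Sobolev to dominate $|\bD_x^{(j_*)}v(x,\te)|$ by $\sum_{\ell\leq p}\|D_\te^\ell\bD_x^{(j_*)}v(x,\cdot)\|_{L^2_\te}$ and then using \eqref{ALinf}/\eqref{A*Linf} to turn each such $\ell$-summand into an $\bL^2_x$ norm of $D_\te^\ell\bD_x^{(j_*+1)}v$; the hypothesis $m>n/2$ is exactly what allows the resulting budget $j_*+p+1$ of derivatives to fit inside $\bH^m$. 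The inductive step then inserts the current pointwise bound $|\bD_x^{(j+1)}v(x,\te)|\leq C\|v\|_{\bH^m}x^s$ into the representation and invokes Proposition~\ref{P.Apoint}: each application of $A$ raises the exponent by one, each application of $A^*$ does the same but saturates at $x^{\al-1}$, and the trace term contributes $x^{\al-1}|H_j(\te)|$ (whose $L^\infty_\te$ control I would recover by spending a few extra angular derivatives through the sphere Sobolev embedding). Once the exponent saturates at $\al$ one arrives at the stated power $(m-j-1)\wedge\al$ of case (i).

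For case (ii) $\al<1$, the representation formulas of Proposition~\ref{P.vDv} now contain genuinely singular trace terms $x^{-\al}G(\te)$ (when inverting $\bD_x$, $j$ even) and $x^{\al-1}H(\te)$ (when inverting $\bD_x^*$, $j$ odd); these traces dominate the chain of estimates and produce the blowups $x^{-\al}$ and $x^{\al-1}$ stated in the theorem, consistently with the factor $(a/x)^{1-\al}$ that now appears in \eqref{A*Linf}. The borderline case $\al=1$ of part (iii) picks up the logarithm $\log(a/x)$ from the $\al=1$ branch of \eqref{A*Linf}; absorbing this logarithm into a power loss $x^\de$ is exactly the $\de$-shift stated in the claim. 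The principal technical obstacle will be bookkeeping the singular traces $G,H$ in cases (ii) and (iii) while simultaneously performing the sphere-Sobolev step, because the traces need to be controlled in $L^\infty_\te$ rather than $L^2_\te$; this forces a tight balance between the number of angular derivatives spent on sphere Sobolev and the number spent on the radial $A,A^*$ inversions, and it is here that the assumption $m>n/2$ is used sharply.
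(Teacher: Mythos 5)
Your proposal is correct and takes essentially the same route as the paper's proof. The paper first establishes a radial version, Lemma~\ref{L.Morrey}, for functions in $\bH^m_xL^\infty_\te$ by precisely the downward propagation you describe---representing successive twisted derivatives $\bD_x^{(j)}v$ via Proposition~\ref{P.vDv}, seeding the induction at the top with the $L^\infty_x$ bounds~\eqref{ALinf}/\eqref{A*Linf}, and improving the $x$-decay one step at a time via Proposition~\ref{P.Apoint}---and then deduces Theorem~\ref{T.Morrey} by Sobolev embedding on $\SS^{n-2}$. The only structural difference is that the paper factors the sphere-Sobolev step out cleanly into the lemma (replacing $L^2_\te$ control by an $L^\infty_\te$ hypothesis once and for all), whereas you interleave it with each step of the radial induction; the content and tools are identical.

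One bookkeeping point worth noting: your reduction to $i=0$ by applying the $(0,j,m-i)$ instance to $D_\te^i v$ charges the $i$ angular derivatives against the radial budget, so it would yield the exponent $(m-i-j-1)\wedge\al$ rather than the stated $(m-j-1)\wedge\al$. The radial decay rate should depend only on the number of \emph{radial} derivatives available, which is exactly what the separation in Lemma~\ref{L.Morrey} (where the angular regularity enters only as an $L^\infty_\te$ hypothesis) tracks correctly. Since in the physically relevant regime $\al<1$ the exponent in the theorem does not involve $m$ at all, this only matters in part~(i) when $m-j-1<\al$; still, to reproduce the stated exponent faithfully you should keep the angular and radial derivative budgets separate as the paper's lemma does, rather than absorbing the angular derivatives into a lower-order $\bH^{m-i}$.
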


\begin{remark}\label{R.uniform}
When $v\in \bH^1_xH^{\si}_\te\cap \bH^1_0$ with $\si>s+\frac n2-1$ for some
nonnegative integer~$s$, the estimate for $v$ replaced by the uniform bound
\[
\|D_\te^l v \|_{L^\infty}\leq C_{\si}\,
\|v\|_{\bH^1_xH^{\si}_\te}
\]
for $0\leq l\leq s$.
\end{remark}

By the Sobolev embedding theorem, Theorem~\ref{T.Morrey} and the
subsequent remark are a straightforward
consequence of the following

\begin{lemma}\label{L.Morrey}
Let $v\in\bH^m_xL^\infty_\te$, with $m\geq1$. Then, for any integer
$j\leq m-1$, the following 
inequalities hold:
\begin{enumerate}
\item If $\al>1$,
\[
\big|\bD_x^{(j)} v(x,\te)\big|\leq C\, \|v\|_{\bH^m_xL^\infty_\te}\, x^{(m-j-1)\wedge \al} \,.
\]

\item If $\al<1$,
\[
\big|\bD_x^{(j)} v(x,\te)\big|\leq C\,
\|v\|_{\bH^m_xL^\infty_\te}\, \begin{cases}
x^{-\al} & \text{if } j \text{ is even},\\
x^{\al-1} & \text{if } j \text{ is odd}.
\end{cases}
\]
If $v\in \bH^1_xL^\infty_\te\cap \bH^1_0$, the estimate for $v$ can be
refined to
\[
\|v \|_{L^\infty}\leq C\,
\|v\|_{\bH^1_xL^\infty_\te}\,.
\]

\item If $\al=1$, the estimate~(i) still holds after replacing $m$ by $m-\de$ in the
  exponent, with any $\de>0$.
\end{enumerate}
\end{lemma}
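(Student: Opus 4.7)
The plan is to proceed by downward induction on $j$, using Proposition~\ref{P.vDv} to peel one twisted derivative off at each step. Setting $w_j := \bD_x^{(j)} v$, the definitions give $w_{j+1} = \bD_x w_j$ when $j$ is even and $w_{j+1} = \bD_x^* w_j$ when $j$ is odd, so Proposition~\ref{P.vDv} yields
\[
w_j = A(w_{j+1}) + x^{-\al}\ms G_j(\te), \qquad w_j = A^*(w_{j+1}) + x^{\al-1}\ms H_j(\te),
\]
respectively, with $G_j$ absent when $\al\geq 1$ and $\|G_j\|_{L^2_\te}+\|H_j\|_{L^2_\te}\leq C\|v\|_{\bH^m_xL^\infty_\te}$. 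Because $v\in\bH^m_xL^\infty_\te$ and all the identities involve only integration in $x$, one can run the whole argument pointwise in $\te$, upgrading the $L^2_\te$ control of $G_j,H_j$ to $L^\infty_\te$ control.

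For the base case $j=m-1$, I combine $w_m\in\bL^2_xL^\infty_\te$ with Theorem~\ref{T.estA}\eqref{ALinf} (when $A$ is applied) or \eqref{A*Linf} (when $A^*$ is applied, under $\al>1$) to obtain $|w_{m-1}(x,\te)|\leq C\|v\|_{\bH^m_xL^\infty_\te}$, which is the claim at exponent $0=(m-j-1)\wedge\al$. For the inductive step, given a pointwise bound $|w_{j+1}(x,\te)|\leq C\|v\|_{\bH^m_xL^\infty_\te}\ms x^{\sigma_{j+1}}$, Proposition~\ref{P.Apoint} produces $|A(w_{j+1})|\leq C'\|v\|\ms x^{\sigma_{j+1}+1}$ or $|A^*(w_{j+1})|\leq C'\|v\|\ms x^{(\sigma_{j+1}+1)\wedge(\al-1)}$, supplemented in the latter case by the kernel contribution $|x^{\al-1}H_j(\te)|\leq C''\|v\|\ms x^{\al-1}$. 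Propagating these exponents through the alternating sequence of $A$ and $A^*$ applications yields Case~(i); the decisive observation is that any kernel $x^{\al-1}$ generated at an $A^*$-step is immediately promoted by the next $A$-step via $A(x^{\al-1})=\frac{1}{2\al}x^\al$, so the iteration can saturate at the threshold $\al$ rather than at $\al-1$.

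Case~(ii) ($\al<1$) is treated by the same iteration, except that Proposition~\ref{P.vDv} now forces both $x^{-\al}G_j$ (at every $A$-step) and $x^{\al-1}H_j$ (at every $A^*$-step); these singular kernels dominate the $A$ and $A^*$ contributions and directly give the rates $x^{-\al}$ for $j$ even and $x^{\al-1}$ for $j$ odd. The refinement for $v\in\bH^1_0$ follows from Corollary~\ref{C.density}: there $v=A(\bD_x v)$ with no kernel term, and Theorem~\ref{T.estA}\eqref{ALinf} gives the uniform bound $\|v\|_{L^\infty}\leq C\|v\|_{\bH^1_xL^\infty_\te}$ immediately. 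Case~(iii) ($\al=1$) differs only in that \eqref{A*Linf} acquires a factor $\log(a/x)$, which can be absorbed into $x^{-\de}$ for arbitrary $\de>0$; since the iteration has finite length, choosing $\de$ small enough at the outset recovers the stated bound with $m$ replaced by $m-\de$.

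The main obstacle is the exponent bookkeeping in Case~(i), in particular the interplay between the cap at $\al-1$ imposed by each $A^*$-step and the unit power recovered at the subsequent $A$-step. Without the identity $A(x^{\al-1})\propto x^\al$ the iteration would stall at $\al-1$ and could never reach the advertised threshold $\al$. A secondary technicality is that the traces $G_j,H_j$ must be controlled in $L^\infty_\te$ rather than in the $L^2_\te$ supplied by Proposition~\ref{P.vDv}; this is handled by applying that proposition pointwise in $\te$, which is legitimate since both $w_j$ and $w_{j+1}$ are $L^\infty$ in $\te$ by hypothesis.
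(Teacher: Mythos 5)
Your proof follows essentially the same route as the paper: you peel one twisted derivative at a time with Proposition~\ref{P.vDv}, use Theorem~\ref{T.estA}\eqref{ALinf}/\eqref{A*Linf} for the base case $j=m-1$, propagate the decay rate with Proposition~\ref{P.Apoint}, and upgrade the kernel estimates from $L^2_\te$ to $L^\infty_\te$ by running the one-dimensional argument pointwise in $\te$ --- precisely what the paper does, written out for $m=1,2$ and then left as an induction. The remark about $A(x^{\al-1})\propto x^\al$ is a useful way to explain why the even-index exponents in case~(i) saturate at~$\al$ rather than stalling at $\al-1$, and your treatment of cases~(ii) and~(iii) matches the paper's (kernels dominate for $\al<1$; the $\log(a/x)$ factor is absorbed into $x^{-\de}$ for $\al=1$).
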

\begin{proof}
In the proof of this theorem we will
use the notation
\[
v_{j}:=\bD_x^{(j)} v\,.
\]
To keep things concrete, we will see how the result is proved for
small values of $m$.

When $m=1$, we have that $v_1\in \bL^2_x L^\infty_\te$, so it follows
from Theorem~\ref{T.estA} that
\[
|Av_1(x,\te)|\leq\sup_\te\|A v_1(\cdot,\te)\|_{L^\infty_x}\leq C \sup_\te
\|v_1(\cdot,\te)\|_{\bL^2_x}=C\|v_1\|_{\bL^2_xL^\infty_\te}\leq C\|v\|_{\bH^1_xL^\infty_\te}\,,
\]
where $\sup$ stands for the essential supremum.
Proposition~\ref{P.vDv} ensures that $v=Av_1+x^{-\al}G_0(\te)$, the
second summand being absent if $\al\geq1$ or $v\in\bH^1_0$. When these conditions are not satisfied, we can
estimate the second summand by noticing that
\begin{align*}
\|G_0\|_{L^\infty_\te}&=
C\|x^{-\al}G_0(\te)\|_{\bL^2_xL^\infty_\te}\leq
C\|v\|_{\bL^2_xL^\infty_\te}+ C\|Av_1\|_{\bL^2_xL^\infty_\te}\\
&\leq C\|v\|_{\bL^2_xL^\infty_\te}+ C\|v_1\|_{\bL^2_xL^\infty_\te}\leq C\|v\|_{\bH^1_xL^\infty_\te}
\end{align*}
using again Theorem~\ref{T.estA}. The estimate for $v$ then follows.

When $m=2$, we have an $\bL^2L^\infty$ bound on $v_2$, so
Theorem~\ref{T.estA} ensures that 
\[
|A^*v_2(x,\te)|\leq C\|v\|_{\bH^2_xL^\infty_\te}x^{-\be}\,,
\]
where 
\[
\be:=\begin{cases}
0 &\text{if }\al>1\,,\\
\de &\text{if }\al=1\,,\\
1-\al &\text{if }\al=1\,.
\end{cases}
\]
Here $\de$ is an arbitrarily small positive constant that we introduce
to take care of the logarithmic term in
Theorem~\ref{T.estA}. Proposition~\ref{P.vDv} then gives
\[
v_1=A^*v_2+ x^{\al-1}G_1(\te)\,,
\]
so with the above bound for $A^*v_2$ and using that
\begin{align*}
\|G_1\|_{L^\infty_\te}&=
C\|x^{\al-1}G_1(\te)\|_{\bL^2_xL^\infty_\te}\leq
C\|v_1\|_{\bL^2_xL^\infty_\te}+ C\|A^*v_2\|_{\bL^2_xL^\infty_\te}\\
&\leq C\|v\|_{\bH^2_xL^\infty_\te}
\end{align*}
we find
\[
|v_1(x,\te)|\leq C\|v\|_{\bH^2_xL^\infty_\te}\, x^{(\al-1)\wedge (-\be)}\,.
\]
We can use this
pointwise estimate for $v_1$ and Proposition~\ref{P.Apoint}
to infer that
\[
\big|Av_1(x,\te)\big|\leq C\|v\|_{\bH^2_xL^\infty_\te} \, x^{\al\wedge (1-\be)}
\]
Since
\[
v=Av_1+x^{-\al} G_0(\te)
\]
and $G_0$ can be controlled as in the case $m=1$ to find that it
does not appear for $\al\geq1$ or $v\in \bH^1_0$ and
satisfies $\|G_0\|_{L^\infty_\te}\leq C\|v\|_{\bH^2_xL^\infty_\te}$
otherwise, we arrive at the desired estimate in the case $m=2$.

Now that we have established the cases $m=1$ and $m=2$, the general
case follows from a totally analogous reasoning and a simple induction
argument.
\end{proof}

Later on in the paper we will need to estimate the norms of products
of functions. This will be accomplished using the following result,
which is a Moser estimate for the twisted Sobolev space
$\bH^m$. Just as in Theorem~\ref{T.Morrey}, we will require functions
in $\bH^m$ with $m>\frac n2$, rather than $m<\frac{n-1}2$ as in the
Euclidean case.

\begin{proposition}\label{P.multilinear}
Given $m>\frac n2$, let us consider nonnegative integers $j_1,\dots,
j_l$ and $k_1, \dots, k_l$ with total
sum
\[
j_1+\cdots+j_l+k_1+\cdots+k_l\leq m\,.
\]
Furthermore, let us set
\[
\eta:=\begin{cases}
0& \text{if }\al>1\,,\\
\al\wedge(1-\al)& \text{if }\al>1\,.
\end{cases}
\]
Then the inequality
\[
\big\| x^{(l-1)\eta} \bD_x^{(k_1)}D_\te^{j_1}u_1\cdots
\bD_x^{(k_l)}D_\te^{j_l}u_l\big\|_{\bL^2}\leq C \|u_1\|_{\bH^m}\cdots\|u_l\|_{\bH^m}
\]
holds for any functions $u_1,\dots, u_l$ in $\bH^m$. For $\al=1$,
the result is still true for any positive (but arbitrarily small) $\eta$.
\end{proposition}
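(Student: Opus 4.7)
This is a twisted-Sobolev analog of the classical Moser product estimate. The plan is to distribute derivatives in the standard way, place the factor with the largest number of derivatives in $\bL^2$, and control the remaining $l-1$ factors pointwise using the Morrey-type inequality of Theorem~\ref{T.Morrey}. The weight $x^{(l-1)\eta}$ in the statement is chosen to compensate for the singularities at the conformal boundary that arise from the Morrey bounds when $\alpha<1$.

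After reordering the factors so that $p_s:=j_s+k_s$ is non-increasing, the constraint $\sum_s p_s\leq m$ together with monotonicity forces $p_s\leq m/s\leq m/2$ for every $s\geq 2$, which combined with the hypothesis $m>\tfrac n2$ brings each of the later factors within the range of Theorem~\ref{T.Morrey}. For the remaining factor $s=1$, the bound $\|D_\te^{j_1}\bD_x^{(k_1)}u_1\|_{\bL^2}\leq\|u_1\|_{\bH^m}$ follows from the definition~\eqref{bH} of $\bH^m$ together with the fact that $\bD_x$ commutes with $D_\te$.

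When $\alpha>1$ one has $\eta=0$, and the Morrey bound of Theorem~\ref{T.Morrey}(i) is uniform in $x$ in the relevant range of indices, so multiplying it by the $\bL^2$ bound on the first factor directly yields the claim. When $\alpha<1$, Theorem~\ref{T.Morrey}(ii) produces a singular bound of the form $x^{-\eta_s}$ with $\eta_s\in\{\alpha,1-\alpha\}$ according to the parity of $k_s$. The hard part --- and the step where I expect the main difficulty --- is arguing that for each of the $l-1$ later factors one may always extract the \emph{favorable} exponent $-\eta=-\min(\alpha,1-\alpha)$, so that the weight $x^{(l-1)\eta}$ cancels precisely the total singularity $x^{-(l-1)\eta}$. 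For this I would combine Proposition~\ref{P.vDv} --- which decomposes a function as a $\bL^2$-controlled piece of the form $A(\bD_x u_s)$ or $A^*(\bD_x^* u_s)$ plus a trace term $x^{-\al}G_s(\te)$ or $x^{\al-1}H_s(\te)$ --- with the pointwise estimates of Proposition~\ref{P.Apoint} and the Hardy-type mapping properties of Theorem~\ref{T.estA}. The trace contributions are controlled in $L^\infty_\te$ by $\|u_s\|_{\bH^m}$ (via Sobolev embedding on $\SS^{n-2}$ applied to the bounds for $\|G_s\|_{L^2_\te}$ and $\|H_s\|_{L^2_\te}$ in Proposition~\ref{P.vDv}), and produce exactly the favorable singular behavior.

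The case $\alpha=1$ is handled by the same argument, absorbing the logarithmic factor from Theorem~\ref{T.estA} into an arbitrarily small $\eta>0$ as permitted by the hypothesis.
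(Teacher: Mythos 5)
Your strategy (derivatives on the high factor into $\bL^2$, low factors into $L^\infty$ via Theorem~\ref{T.Morrey}) does not close the derivative count in the generality claimed, and this is a genuine gap. After reordering so that $p_s:=j_s+k_s$ is non-increasing, you correctly get $p_s\leq m/s\leq m/2$ for $s\geq2$. But to control $\bD_x^{(k_s)}D_\te^{j_s}u_s$ \emph{pointwise} via Theorem~\ref{T.Morrey}, you need the stronger condition $p_s\leq m-\tfrac n2$. Since the hypothesis is only $m>\tfrac n2$ (not $m\geq n$), the inequality $m/2\leq m-\tfrac n2$ can fail, and then $p_s>m-\tfrac n2$ is perfectly possible for $s\geq 2$. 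For a concrete instance take $l=2$, $n=10$, $m=6$ and $p_1=p_2=3$: one needs $p_2\leq m-\tfrac n2=1$, but $p_2=3$, so the Morrey estimate cannot be invoked on the second factor. Your sentence ``combined with the hypothesis $m>\tfrac n2$ brings each of the later factors within the range of Theorem~\ref{T.Morrey}'' is therefore not justified, and the easy $L^\infty\times\bL^2$ split simply does not cover the whole range of indices.

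The paper handles exactly this obstruction by \emph{interpolating} on the sphere rather than asking for a pointwise bound: it only uses the full $L^\infty$ pairing in the easy case where one of the factors actually carries no derivatives (or more generally $m-k_1-j_1>\tfrac n2$), and otherwise places the first factor in $L^\infty_x L^{p_1}_\te$ and the second in $\bL^2_x L^{p_2}_\te$, with $p_1,p_2$ coming from fractional Sobolev embeddings $H^\sigma(\SS^{n-2})\hookrightarrow L^p(\SS^{n-2})$, and checks that $1/p_1+1/p_2<1/2$ from $\sum(j_s+k_s)\leq m$ and $m>\tfrac n2$. That interpolation step is the essential ingredient missing from your proposal. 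Your secondary concern about extracting the favourable exponent for $\al<1$ is indeed a real subtlety, and the paper tackles it through the decomposition in Proposition~\ref{P.vDv} and the mapping properties of $A,A^*$ as you anticipated, but that part of your plan cannot be executed until the derivative-counting gap above is repaired.
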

\begin{proof}
  Let us prove the statement for $l=2$, which is the case that will be
  needed in this paper. The general result follows from an analogous
  argument using the generalized Cauchy--Schwarz inequality. 

  Since
\[
\|x^\eta u_1\|_{L^\infty}\leq C\|u_1\|_{\bH^m}
\]
by Theorem~\ref{T.Morrey}, we have
\begin{align}
\|x^\eta u_1\, \bD_x^{(k_2)}D_\te^{j_2}u_2\|_{\bL^2} &\leq C\|x^\de u_1\|_{L^\infty}\,
\|\bD_x^{(k_2)}D_\te^{j_2}u_2\|_{\bL^2} \notag \\
&\leq C\|u_1\|_{\bH^m}\,
\|\bD_x^{(k_2)}D_\te^{j_2}u_2\|_{\bL^2}\notag \\
&\leq C \|u_1\|_{\bH^m}\|u_2\|_{\bH^m}\,,\label{multieasy}
\end{align}
which proves the result  when $j_1$ and $k_1$ are zero. Hence, by symmetry we can henceforth assume that both
$j_1+k_1$ and $j_2+k_2$ are nonzero.

It follows from Proposition~\ref{P.vDv} that 
\begin{equation}\label{multi1}
\bD_x^{(k_1)}D_\te^{j_1}u_1= A^{\#}(\bD_x^{(k_1+1)}D_\te^{j_1}u_1)+ x^{-\eta_1}\,F(\te)\,,
\end{equation}
where $A^{\#}$ (resp.\ $\eta_1$) stands for $A$ or $A^*$ (resp.\
$-\al$ or $\al-1$) depending on the parity of
$k_1$. The terms with exponent $\eta_1=-\al$ do not appear if
$\al\geq1$. Taking $s$ derivatives with respect to $\te$ in the
identity~\eqref{multi1} and using the estimates for the operators
$A$ and $A^*$ proved in Theorem~\ref{T.estA}, we immediately find that
\begin{align*}
\big\| x^\de \bD_x^{(k_1)}
D_\te^{j_1+s}u_1\big\|_{L^\infty_xL^2_\te}&\leq C\big( \| \bD_x^{(k_1)}
D_\te^{j_1+s}u_1\|_{\bL^2} + \| \bD_x^{(k_1+1)}
D_\te^{j_1+s}u_1\|_{\bL^2}\big)\\
&\leq C\|u_1\|_{\bH^{j_1+k_1+s+1}}\,.
\end{align*}
Since $j_1+k_1$ is now at most $m-1$, by the Sobolev embedding theorem this yields
\[
\|x^\eta \bD_x^{(k_1)}
D_\te^{j_1}u_1\big\|_{L^\infty_xL^{p_1}_\te}\leq C\|x^\eta \bD_x^{(k_1)}
D_\te^{j_1}u_1\big\|_{L^\infty_x H^{m-j_1-k_1-1}_\te}\leq C\|u_1\|_{\bH^m}\,,
\]
where the exponent $p_1\in[2,\infty]$ is $\infty$ if $m-k_1-j_1>\frac
n2$ and the reciprocal of 
\[
\frac12-\frac{m-k_1-j_1-1}{n-2}
\]
if $m-k_1-j_1<\frac n2$. In the limiting case, $m-k_1-j_1=\frac n2$, we can
take any finite value of $p_1$. 

In the first case, namely, $m-k_1-j_1>\frac n2$, we then have an
$L^\infty$ bound for $x^\eta \bD_x^{(k_1)}
D_\te^{j_1}u_1$, so the estimate follows just as
in~\eqref{multieasy}, reversing the roles of $u_1$ and $u_2$. Hence
let us assume that we are not in this case and notice that the inequality
\[
\|\bD_x^{(k_2)} D_\te^{j_2}u_2\|_{\bL^2_xL^{p_2}_\te}\leq C
\|\bD_x^{(k_2)} D_\te^{j_2}u_2\|_{\bL^2_xH^{m-j_2-k_2}_\te}\leq C\|u_2\|_{\bH^m}
\]
holds provided that the exponent $p_2$ is chosen as
\[
p_2:=\begin{cases}
\infty &\text{if } m-k_2-j_2> \tfrac n2\,,\\[1mm]
(\frac12-\frac{m-k_2-j_2}{n-2})^{-1} &\text{if } m-k_2-j_2< \tfrac n2\,.
\end{cases}
\]
When $m-k_2-j_2= \tfrac n2$, one can take any finite $p_2$. Since
\[
j_1+j_2+k_1+k_2\leq m \quad \text{and}\quad m>\frac n2\,,
\]
a straightforward computation shows that
\[
\frac1{p_1}+\frac1{p_2}<\frac12\,.
\]
(In the limiting case, of course, one has to choose the exponent
large enough.) Hence one can use the Cauchy--Schwarz inequality to
arrive at
\begin{align*}
  \|x^\eta \bD_x^{(k_1)}D_\te^{j_1}u_1\,
  \bD_x^{(k_2)}D_\te^{j_2}u_2\|_{\bL^2}&= \bigg(\int  x^{2\eta} (\bD_x^{(k_1)}D_\te^{j_1}u_1)^2\,
  (\bD_x^{(k_2)}D_\te^{j_2}u_2)^2\, x\, dx\, d\te\bigg)^{\frac12}\\
&\leq   \|x^\eta \bD_x^{(k_1)}D_\te^{j_1}u_1\|_{L^\infty_xL^{p_1}_\te}
\, \| \bD_x^{(k_2)}D_\te^{j_2}u_2\|_{\bL^2_xL^{p_2}_\te}\\
&\leq C\|u_1\|_{\bH^m}\,\|u_2\|_{\bH^m}\,,
\end{align*}
as claimed.
\end{proof}

For completeness, we shall conclude this section with a compactness
result for twisted Sobolev spaces. The proof we give
follows~\cite{HW}, where this was proved for $\al<1$ in a
significantly more general setting.

\begin{proposition}\label{P.compactness}
Let $u_j$ be a sequence bounded in $\bH^1$ that converges weakly to
some $u\in \bH^1$. Then $u_j$ also converges strongly, that is, $\|u_j-u\|_{\bL^2}\to0$.
\end{proposition}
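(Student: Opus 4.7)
The plan is a cutoff argument that reduces matters to the standard Rellich--Kondrachov theorem on a compact piece away from the conformal boundary, with the near-boundary contribution controlled uniformly via the Hardy-type machinery of Section~\ref{S.Sobolev}. Set $w_j := u_j - u$, so that $w_j \rightharpoonup 0$ weakly in $\bH^1$ and $\|w_j\|_{\bH^1} \leq M$ for some constant $M$. Fix $\eta>0$; the goal is to show $\|w_j\|_{\bL^2}^2 < \eta$ for all sufficiently large $j$, with $\eta>0$ arbitrary.

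For the near-boundary piece, Proposition~\ref{P.vDv} lets me write
\[
w_j = A(\bD_x w_j) + x^{-\al}\ms G_j(\te),
\]
where the second term is absent when $\al\geq 1$ and $\|G_j\|_{L^2_\te}\leq C\|w_j\|_{\bH^1}\leq CM$ otherwise. Since $A\vp(x)$ depends only on the values of $\vp$ on $(0,x)$, the proof of~\eqref{AL2} with $r=1$, $s=0$ carried out on $(0,\epsilon)$ rather than $(0,a)$ gives
\[
\|A(\bD_x w_j)\|_{\bL^2((0,\epsilon)\times\SS^{n-2})}\leq c\epsilon\,\|\bD_x w_j\|_{\bL^2}\leq c\epsilon M,
\]
while a direct computation with the weight $x^{1-2\al}$ yields, for $\al<1$,
\[
\|x^{-\al}G_j\|_{\bL^2((0,\epsilon)\times\SS^{n-2})}^2 = \frac{\epsilon^{2-2\al}}{2-2\al}\,\|G_j\|_{L^2_\te}^2\leq C\epsilon^{2-2\al}M^2.
\]
Both bounds tend to zero as $\epsilon\to 0$ uniformly in $j$, so I can fix $\epsilon$ small enough that the $\int_0^\epsilon$ contribution to $\|w_j\|_{\bL^2}^2$ is at most $\eta/2$ for every $j$.

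For the bulk piece, I note that on $[\epsilon,a]\times\SS^{n-2}$ the measure $x\,dx\,d\te$ is equivalent to the standard Lebesgue measure, and since $x\geq\epsilon$ there, the twisted derivative $\bD_x v = v_x + (\al/x)v$ belongs to $L^2$ if and only if $v_x$ does. Hence the restriction map $\bH^1((0,a)\times\SS^{n-2})\to H^1([\epsilon,a]\times\SS^{n-2})$ is continuous, and the classical Rellich--Kondrachov theorem gives a compact embedding into $L^2([\epsilon,a]\times\SS^{n-2})$. Weak convergence $w_j\rightharpoonup 0$ in $\bH^1$ implies weak convergence of the restrictions in $H^1([\epsilon,a]\times\SS^{n-2})$, so the standard subsequence principle (any subsequence admits a strongly $L^2$-convergent sub-subsequence, whose only possible limit is $0$) forces strong convergence of the full sequence in $\bL^2([\epsilon,a]\times\SS^{n-2})$; this contribution to $\|w_j\|_{\bL^2}^2$ is therefore below $\eta/2$ once $j$ is large enough. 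Combining the two pieces completes the proof. The main subtlety is the case $\al<1$, where the boundary trace $G_j$ does not vanish on $\{x=0\}$, but this is exactly compensated for by the fact that the weight $x^{-\al}$ is still square-integrable against $x\,dx$ on $(0,\epsilon)$ with integral proportional to $\epsilon^{2-2\al}\to 0$.
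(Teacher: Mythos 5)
Your proof is correct, but it takes a genuinely different route from the one in the paper. You cut the domain into a near-boundary shell $(0,\epsilon)\times\SS^{n-2}$ and an interior piece $[\epsilon,a]\times\SS^{n-2}$: on the shell you control the $\bL^2$ norm uniformly in $j$ via the explicit representation $w_j=A(\bD_x w_j)+x^{-\al}\ms G_j$ from Proposition~\ref{P.vDv}, combined with the Hardy-type bound~\eqref{AL2} restricted to $(0,\epsilon)$ (which is legitimate because $A$ is causal in~$x$, i.e.\ $A\vp(x)$ only sees $\vp|_{(0,x)}$) and the elementary computation $\|x^{-\al}G_j\|_{\bL^2((0,\epsilon)\times\SS^{n-2})}^2=\epsilon^{2-2\al}\|G_j\|_{L^2_\te}^2/(2-2\al)$ for $\al<1$; on the interior piece the $\bH^1$ norm controls the ordinary $H^1([\epsilon,a]\times\SS^{n-2})$ norm (with an $\epsilon$-dependent constant, which is harmless since $\epsilon$ is fixed first) and the classical Rellich--Kondrachov theorem plus the subsequence principle does the rest. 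The paper instead works uniformly over the whole domain: it proves a weighted Poincar\'e inequality~\eqref{Poincare} on small boxes $V=(x_0,x_0+h)\times B$ with the twist factor $\chi$, sums over a cover, and obtains the Cauchy estimate $\|u_j-u_k\|_{\bL^2}^2\leq\ep_{jk}+C_\de h^{2-\de}\sup_j\|u_j\|_{\bH^1}^2$ before letting $h\to0$. Your decomposition is the more standard and perhaps more transparent one --- it isolates the boundary singularity as a small perturbation and reduces the rest to classical compactness, making visible exactly where the nonzero trace $G_j$ enters when $\al<1$. The paper's argument, which follows~\cite{HW}, is more self-contained (no appeal to the Euclidean Rellich theorem), treats $\al<1$ and $\al\geq1$ symmetrically through the single weight $\chi$, and is designed to extend to the more general setting of~\cite{HW} where one cannot as conveniently split off a clean boundary collar. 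Both are sound proofs of the proposition.
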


\begin{proof}
To estimate the $\bL^2$-norm of $u_j-u_k$, let us start by dividing
the set $(0,a)\times\SS^{n-2}$ in convex regions of the form 
\[
V:=(x_0,x_0+h)\times B\,,
\]
where $h$ is a small positive number and $B$ is a small set in $\SS^{n-2}$. Without loss of generality,
we can take Cartesian coordinates in $B$ (which, with some abuse of
notation, will be denoted by $\te=(\te_1,\dots,\te_{n-2})$) and
assume that $B$ is the set $\{\max_j|\te_j|<h\}$. It is clear that
we can cover $(0,a)\times\SS^{n-2}$ by a finite collection $\cV_h$ of sets as above, so that 
\begin{equation}\label{cV}
 \| w \|_{\bL^2}^2 \leq \sum_{V\in\cV_h}  \|
w \|_{\bL^2(V)}^2\leq C  \| w \|_{\bL^2}^2 
\end{equation}
for some $C$ independent of $h$ and all $w\in\bL^2$. We will
eventually apply this inequality to the difference $w:=u_j-u_k$, where
$j$ and $k$ are large integers.

To estimate the norm of a function on the set $V\in\cV_h$ we
will use the Poincar\'e inequality
\begin{equation}\label{Poincare}
\big\|w\big\|_{\bL^2(V)}^2\leq \frac{ \big(\int_V w\, \chi\, x\, dx\,
  d\te\big)^2}{\int_V \chi^2\, x\, dx\, d\te}+
C_\de h^{2-\de}\bigg( \Big\|\chi\,\pd_x\Big(\frac w\chi\Big)\Big\|_{\bL^2(V)}^2+ \big\|D_\te w\big\|_{\bL^2(V)}^2\bigg)\,,
\end{equation}
valid for any $w\in\bH^1$ with $\de$ being an arbitrarily small positive constant and 
\[
\chi(x):=\begin{cases}
1 &\text{if } \al\geq1\,,\\
x^{-\al} &\text{if }  \al<1\,.
\end{cases}
\]
Let us now apply Eq.~\eqref{Poincare} to $w:=u_j-u_k$. The fact that
the sequence $u_j$ is weakly convergent ensures that 
\[
\ep_{jk}:=\int_V (u_j-u_k)\, \chi\, x\, dx\, d\te\to0
\]
as $j,k\to\infty$, so from~\eqref{cV} and~\eqref{Poincare} we get
\begin{align}\label{Cauchy}
\big\| u_j-u_k\big\|_{\bL^2}^2 &\leq \ep_{jk}+C_\de
h^{2-\de}\sum_{V\in \cV_h}\bigg( \Big\|\chi\,\pd_x\Big(\frac
{u_j-u_k}\chi\Big)\Big\|_{\bL^2(V)}^2+ \big\|D_\te u_j-D_\te u_k\big\|_{\bL^2(V)}^2\bigg)\,.
\end{align}
When $\al<1$, 
\[
\chi\,\pd_x\Big(\frac {u_j-u_k}\chi\Big) = \bD_x (u_j- u_k)\,,
\]
while for $\al\geq1$ we have
\begin{align*}
\sum_{V\in \cV_h}\Big\|\chi\,\pd_x\Big(\frac
{u_j-u_k}\chi\Big)\Big\|_{\bL^2(V)}^2&\leq
C\big\|\pd_x(u_j-u_k)\big\|_{\bL^2}^2\\
&\leq C\Bigg( \|\bD_x(u_j-u_k)\|_{\bL^2}+\al\bigg\|\frac
{u_j-u_k}x\bigg\|_{\bL^2} \Bigg)\\
&= C\Bigg(\|\bD_x(u_j-u_k)\|_{\bL^2}+\al\bigg\|\frac
{A(\bD_x(u_j-u_k))}x\bigg\|_{\bL^2}\Bigg)\\
&\leq C \|\bD_x(u_j-u_k)\|_{\bL^2}
\end{align*}
by Theorem~\ref{T.estA} and Proposition~\ref{P.vDv}. As one can take
$h$ arbitrarily small and $\|u_j\|_{\bH^1}\leq C$ for all $j$ by hypothesis, it
then follows from Eq.~\eqref{Cauchy} and the previous discussion that $u_j$
is a Cauchy sequence in $\bL^2$, as claimed.

Hence it only remains to prove the Poincar\'e
inequality~\eqref{Poincare}. By Corollary~\ref{C.density} it is enough
to prove it for $w=\chi\, F$, where $F\in
C^\infty([0,a]\times\SS^{n-2})$. As $F$ is smooth, for $(x,\te)$ and $(x',\te')$ in the set $V$ one can write
\begin{multline*}
F(x', \te')-F(x,\te)=\int_x^{ x'}\pd_s
F(s,\te_1,\dots,\te_{n-2})\, ds + \int_{\te_1}^{\te_1'}
\pd_sF(x',s,\te_1,\dots,\te_{n-2})\, ds\\
+\cdots +\int_{\te_{n-2}}^{\te_{n-2}'}
\pd_sF(x',s,\te_1',\dots,\te_{n-3}',s)\, ds\,.
\end{multline*}
Squaring both sides of the equation and using an elementary identity
we find
\begin{multline}\label{FF}
\big[F(x', \te')-F(x,\te)\big]^2\leq (n-1)\Bigg[\bigg(\int_x^{x'}\pd_s
F(s,\te_1,\dots,\te_{n-2})\, ds\bigg)^2 \\
+ \bigg(\int_{\te_1}^{\te_1'}\pd_s
F(x',s,\te_2,\dots,\te_{n-2})\, ds\bigg)^2 +\cdots + \bigg(\int_{\te_{n-2}}^{\te_{n-2}'}\pd_s
F(x',\te_1',\dots,\te_{n-3}',s)\, ds\bigg)^2\Bigg]\,.
\end{multline}

Suppose $\al\geq1$, so that $F=w$ and the inequality~\eqref{Poincare} is just
the ordinary Poincar\'e inequality with some control on
the dependence of the constants on the size of the domain. Let us integrate the
inequality~\eqref{FF} over $V\times V$ with respect to the measure
\begin{equation}\label{measure}
x\, dx\, x'\, dx'\, d\te\, d\te'\,.
\end{equation}
The integral of the LHS is then 
\[
\int_{V\times V} \big[F(x', \te')-F(x,\te)\big]^2\, x\, dx\, x'\, dx'\, d\te\, d\te'= 2|V|\big\|w\|_{\bL^2(V)}^2-2\bigg(\int_V w\, x\, dx\,
  d\te\bigg)^2\,,
\]
with 
$$
|V|:=\int_V x\, dx\, d\te
$$
being the measure of
the set $V$. The integral of the first summand in the RHS
of~\eqref{FF} gives
\begin{align*}
\int_{V\times V} \bigg(\int_x^{x'}&\pd_s
w(s,\te_1,\dots,\te_{n-2})\, ds\bigg)^2 x\, dx\, x'\, dx'\, d\te\,
d\te' \\
&\leq\int \bigg|\int_x^{x'}\frac{ds'}{s'}\bigg|\bigg(\int_{x_0}^{x_0+h}s\,\big[\pd_s
w(s,\te_1,\dots,\te_{n-2})\big]^2\, ds\bigg)^2 x\, dx\, x'\, dx'\, d\te\,
d\te'\\[1mm]
&=I_1\,\big\|\pd_x w\big\|_{\bL^2}\leq CI_1\,\big\|\bD_x w\big\|_{\bL^2(V)}\,,
\end{align*}
where
\begin{align*}
I_1&:=\int_V\bigg(\int_{x_0}^{x_0+h}\bigg|\int_x^{x'}\frac{ds'}{s'}\bigg|\,
x\, dx \bigg)\,x'\, dx'\,d\te'\\
&=\int_{(-h,h)^{n-2}}\bigg(\int_{x_0}^{x_0+h}\int_{x_0}^{x_0+h}x\, x'\,\big|\log x-\log
x'\big|\,  dx'\,  dx\bigg)\,d\te'\,.
\end{align*}

We shall next prove that 
\begin{equation}\label{I1}
I_1\leq C h^{2-\de}|V|\,.
\end{equation}
for any $\de>0$. For this, let us first assume that $x_0<10 h$. By the
symmetry of the integrand under the exchange of $x$ and $x'$ and the fact that $|\log x|\leq c_\de x^{-\de}$ for $x\in(0,a)$, we have then 
\[
I_1\leq 2\bigg(\int_{0}^{11h}x\, |\log x|\,
dx\bigg)\bigg(\int_V x'\, dx'\, d\te'\bigg)<C_\de h^{2-\de}|V|\,.
\]
When $x_0\geq10h$, we can use that there is
some $\bar x$ between $x$ and $x'$ such that
\begin{align*}
I_1&\leq \int_{(-h,h)^{n-2}}\bigg(\int_{x_0}^{x_0+h}\int_{x_0}^{x_0+h}\frac{|x-x'|}{\bar x}\,
x\, x'\, dx'\, dx\bigg)\, d\te'\\
&\leq h \bigg(\int_{x_0}^{x_0+h}
\frac{x}{\bar x}\, dx\bigg)\bigg(\int_Vx'\, dx'\, d\te'\bigg)\leq \frac{11}{9}h^2|V|\,,
\end{align*}
so the inequality~\eqref{I1} is proved. 

The integral of any other summand in the inequality~\eqref{FF} with
respect to the measure~\eqref{measure} can be estimated using a
similar but simpler argument as
\begin{align*}
\int_{V\times V} \bigg(\int_{\te_m}^{\te_m'}&\pd_s
w(x',\dots,\te_{m-1}',s,\dots,\te_{n-2})\, ds\bigg)^2 x\, dx\, x'\, dx'\, d\te\,
d\te' \\
&\leq\int_{V\times V} xx'|\te_m-\te_m'|\bigg(\int_{-h}^{h}\big[\pd_s
w(x',\dots,\te_{m-1}',s,\dots,\te_{n-2})\big]^2\bigg)^2 \\[1mm]
&\leq Ch^2|V|\|\pd_{\te_j}w\|_{\bL^2(V)}^2\,,
\end{align*}
which finishes the proof of the Poincar\'e inequality~\eqref{Poincare}
for $\al\geq1$. The case of $\al<1$ is analogous, the only difference
being that one must integrate the inequality~\eqref{FF} with respect
to the measure
\[
\chi(x)^2\, x\, dx\, \chi(x')^2\,x'\, dx'\, d\te\, d\te'
\]
instead of~\eqref{measure}.
\end{proof}

\section{Elliptic estimates at infinity}\label{Ell}
\label{S.elliptic}

One of the key ingredients in our proof of the well-posedness property
for the mixed initial-boundary value problem corresponding to
holography in asymptotically AdS manifolds consists in estimates in
twisted Sobolev spaces for the elliptic part $L_g$ of the hyperbolic
operator $P_g$. These are an analog for asymptotically AdS
spaces of the estimates for asymptotically Euclidean spaces derived by
Christodoulou and Choquet-Bruhat in~\cite{CC}. The statements and
method of proof, however, are totally different: in the asymptotically
Euclidean setting, the proof hinges on an integral inequality due to
Nirenberg and Walker~\cite{Nirenberg} and it is enough to consider
standard Sobolev spaces with dimension-dependent polynomial
weights. For more general elliptic operators, related estimates in weighted
Sobolev spaces can be found in~\cite{Rabier}. It should be noticed
that, in view of our future applications to energy inequalities for
the wave equations, here we need a different approach yielding
estimates in {\em twisted}\/ Sobolev spaces $\bH^m$. 

To define what we mean by the elliptic part of $P_g$, observe that, by 
Eq.~\eqref{eq-u}, the operator $P_g$ reads as
\[
P_gw=-\pd_{t}^2w+b_i\, \pd_t\pd_{\te^i}w+ b_0\, \pd_t\pd_xw + b\,
\pd_tw+ L_gw\,.
\]
The operator $L_g$, which we define as the elliptic part of $P_g$, is an elliptic operator of second order in the space
variables $(x,\te)$ (whose coefficients also depend on time) and which is of the form
\begin{align}
L_g w=-\bD_x^*\bD_x w+ \De_\te w + \cO(x)\, D_{x\te}w +\cO(x^2)\,
  D_{x\te}^2w.\label{Lg}
\end{align}
We will consider the equation 
\begin{equation}\label{LguF}
L_g w=F \quad \text{in }
(0,a)\times\SS^{n-2}
\end{equation}
with suitable Dirichlet boundary conditions
\begin{equation}\label{Dirichlet}
x^{\al}
w|_{x=0}= w|_{x=a}=0\,.
\end{equation}
When considering
weak solutions to the equations, these conditions will simply
translate as the requirement that the solution $w$ lies in $\bH^1_0$.

There are two important aspects we need to pay attention to in the derivation of the estimates at infinity for the equation~\eqref{LguF}: the way the twisted
derivative and its adjoint enter the estimates, and the dependence of
the various constants on the small parameter $a$. We will first consider the simpler problem
\begin{equation}\label{ellip1}
\bD_x^*\bD_x w-\De_\te w=F(x,\te) \quad\text{in } (0,a)\times\SS^{n-2}\,,
\end{equation}
again with the boundary conditions~\eqref{Dirichlet}, with the
obvious notion of weak solutions $w\in\bH^1_0$. To spell out the
details in the simplest case, we will say that a function $w\in
\bH^1_0$ is a {\em weak solution}\/ of the problem~\eqref{ellip1} if
\begin{equation}\label{ellip2}
\int \big(\bD_xw\, \bD_xv +\nabla_\te w\cdot \nabla_\te
v\big)\, x\, dx\, d\te= \int F v \, x\, dx\, d\te
\end{equation}
for all $v\in \bH^1_0$. Here $\nabla_\te$ corresponds to the covariant
derivative on the $(n-2)$-sphere, the dot product is also taken with
respect to the sphere metric and hereafter all integrals are taken
over $(0,a)\times\SS^{n-2}$ unless otherwise stated.

Some arguments will be made slightly easier by keeping track of the
dependence of some constants on the width~$a$ of
this chart. For this, it is convenient to replace the variable
$x\in(0,a)$ by a rescaled variable
\[
\xi:=x/a\,,
\]
which takes values in $(0,1)$. With some abuse of notation, let us
denote the twisted derivatives with respect to $\xi$ by
\[
\bD_\xi w:=\pd_\xi w+\frac\al \xi w\,, \qquad \bD_\xi^* w=-\pd_\xi
w+\frac{\al-1} \xi w
\]
and define the ordered $m^{\text{th}}$ twisted derivative
$\bD_\xi^{(m)}$ similarly (cf.\ Eq.~\eqref{D(m)}).
In terms of this new variable, Eq.~\eqref{ellip1} reads as
\begin{equation}\label{ellip1b}
\frac{\bD_\xi^*\bD_\xi w}{a^2}-\De_\te w=F \quad\text{in } (0,1)_{\xi}\times
\SS^{n-2}_\te\,,\qquad \xi^{\al}
w|_{\xi=0}=w|_{\xi=1} =0\,,
\end{equation}
so that
\begin{equation}\label{ellip2b}
\int \bigg(\frac{\bD_\xi w\, \bD_\xi v}{a^2} +\nabla_\te w\cdot \nabla_\te
v\bigg)\, \xi\, d\xi\, d\te=\int  F v \, \xi\, d\xi\, d\te
\end{equation}
for any  $v\in \bH^1_0$.

The dependence of the various estimates on the small constant~$a$ will be
described then in terms of the norms
\begin{subequations}\label{norms-a}
\begin{align}
\|w\|_{\bH^0_a}&:=\|w\|_{\bL^2}\,,\\
\|w\|_{\bH^1_a}&:=\frac{\|w\|_{\bL^2}}a+ \frac{\|\bD_\xi w\|_{\bL^2}}a
+\|D_\te w\|_{\bL^2}\,,\\
\|w\|_{\bH^m_a}&:= \frac{\|w\|_{\bL^2}}a +\sum_{l=1}^m\bigg( \|D_\te^l
w\|_{\bL^2}+ \frac{\|D_\te^{l-1}\bD_\xi w\|_{\bL^2}}a\bigg) + \sum_{l=0}^{m-2}
\frac{\|D_\te^l \bD_\xi^{(m-l)}w\|_{\bL^2}}{a^2}\,,
\end{align}
\end{subequations}
where $m\geq2$. We will also need the norms $\|\cdot\|_{\bH^m_1}$,
which are defined as above after substituting the parameter~$a$ by $1$.

The basic estimate for the simplified problem~\eqref{ellip1}
is the following, which generalizes the $\bH^2$ estimate obtained by Warnick~\cite{Warnick} in the case $\alpha<1$ to $\bH^{m+2}$ estimates in the full range $\alpha>0$:

\begin{theorem}\label{T.ellip}
Suppose $w\in \bH^1_0$ is a weak solution  of the
problem~\eqref{ellip1}, with  $F\in\bH^m$ for some
  $m\geq0$. Then $w\in \bH^{m+2}$ and satisfies the estimate
\begin{equation*}
% \frac{\|w\|_{\bL^2}}{a}+ \sum_{l=1}^{m+2}\bigg(\|D_\te^lw\|_{\bL^2}+
% \frac{\|D_\te^{l-1}\bD_\xi w\|_{\bL^2}}a\bigg)
% +\frac1{a^2}\sum_{l=2}^m\|D_\te^l \bD_\xi^{(m+2-l)}w\|_{\bL^2}\\
% \leq C_m \sum_{l=0}^m\sum_{k=0}^l
% \|D_\te^{k}\bD_\xi^{(l-k)}F\|_{\bL^2}\,,
\|w\|_{\bH^{m+2}_a}\leq c_m\|F\|_{\bH^m_1}\,,
\end{equation*}
where the constant $c_m$ does not depend on~$a$.%  In particular,
% \[
% \|w\|_{\bH^{m+2}}\leq c_{m,a}\|F\|_{\bH^m}\,.
% \]
\end{theorem}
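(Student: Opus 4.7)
The plan is a standard three-step elliptic regularity argument adapted to the twisted setting: (1) a base energy estimate from testing against $w$, (2) tangential regularity obtained by differentiating in $\te$ and re-testing, and (3) recovery of normal twisted derivatives by algebraic manipulation of the equation. The rescaling $\xi=x/a$ is the main device that turns the $a$-dependence into bookkeeping: after rescaling, the problem becomes $a^{-2}\bD_\xi^*\bD_\xi w-\De_\te w=F$ with weak formulation \eqref{ellip2b}, and the definition of $\|\cdot\|_{\bH^m_a}$ in \eqref{norms-a} is precisely calibrated so that $a$-independent estimates for the rescaled problem translate into the claimed bound.

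For the base case $m=0$, I would take $v=w$ in~\eqref{ellip2b} to obtain
\[
\frac{\|\bD_\xi w\|_{\bL^2}^2}{a^2}+\|\nabla_\te w\|_{\bL^2}^2=\int F w\,\xi\,d\xi\,d\te\leq \|F\|_{\bL^2}\|w\|_{\bL^2},
\]
and then apply the twisted Poincar\'e inequality~\eqref{L2H1} from Corollary~\ref{C.density}, which gives $\|w\|_{\bL^2}\leq c\|\bD_\xi w\|_{\bL^2}$ in the rescaled variable, to close the estimate and conclude $\|w\|_{\bH^1_a}\leq c\|F\|_{\bL^2}$. The second-order piece of the $\bH^2_a$-norm, namely $a^{-2}\|\bD_\xi^*\bD_\xi w\|_{\bL^2}$, then follows by rearranging the equation itself, $a^{-2}\bD_\xi^*\bD_\xi w=F+\De_\te w$, once we have controlled $\De_\te w$ by the tangential step below.

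For the tangential regularity, the crucial observation is that $D_\te$ commutes with $\bD_\xi$, $\bD_\xi^*$ and with the measure $\xi\,d\xi\,d\te$, and that $D_\te^l w$ remains in $\bH^1_0$ whenever $w\in\bH^1_0$ (since the boundary condition only involves the $\xi$-variable). Differentiating~\eqref{ellip2b} $l$ times in $\te$ and testing against $D_\te^l w$ yields, for every $0\leq l\leq m$,
\[
\frac{\|D_\te^l \bD_\xi w\|_{\bL^2}}{a}+\|D_\te^{l+1} w\|_{\bL^2}\leq c\,\|D_\te^l F\|_{\bL^2}+c\,\frac{\|D_\te^{l} w\|_{\bL^2}}{a},
\]
and a further application of Poincar\'e absorbs the last term. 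This inductively controls every summand of $\|w\|_{\bH^{m+2}_a}$ except those involving at least two powers of $\bD_\xi$.

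The remaining mixed terms $a^{-2}\|D_\te^l\bD_\xi^{(m+2-l)} w\|_{\bL^2}$ are then recovered algebraically from the equation. Applying $D_\te^l\bD_\xi^{(k)}$ to $a^{-2}\bD_\xi^*\bD_\xi w=F+\De_\te w$ and using $\bD_\xi^{(k+2)}w=\bD_\xi^{(k)}(\bD_\xi^*\bD_\xi w)$ (which follows from~\eqref{D(m)}), one expresses each of these mixed derivatives in terms of $D_\te^l \bD_\xi^{(k)} F$ and tangentially differentiated lower-order quantities already controlled above; a finite induction on the number of $\bD_\xi$-factors finishes the proof. The induction on $m$ itself is immediate once the tangential energy inequality is in place.

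The main obstacle is ensuring that the test functions $D_\te^l w$ genuinely belong to $\bH^1_0$ and that all integrations by parts are justified: this requires approximating $w$ by elements of $C^\infty\cpt((0,a)\times\SS^{n-2})$ via Corollary~\ref{C.density}, so that the trace condition $\xi^\al w|_{\xi=0}=0$ (which is weaker than $w|_{\xi=0}=0$ when $\al<1$) is correctly handled in every integration by parts. Once this density step is carried out, keeping track of the powers of $a$ in~\eqref{norms-a} through the rescaling is bookkeeping rather than analysis.
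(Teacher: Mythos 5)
Your approach is essentially the one used in the paper: rescale to $\xi = x/a$, obtain the base $\bH^1_a$ estimate by testing with $w$ and invoking the twisted Poincar\'e inequality from Corollary~\ref{C.density}, recover tangential regularity by differentiating in $\te$ and re-testing, and then recover the twisted $\xi$-derivatives algebraically from $a^{-2}\bD_\xi^*\bD_\xi w = F+\De_\te w$. Two points, however, need repair before the argument closes.

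\emph{Justification of the tangential step.} You correctly identify that one must justify using $D_\te^l w$ as a test function, but the proposed remedy --- approximating $w$ by $C^\infty_{\mathrm c}((0,a)\times\SS^{n-2})$ via Corollary~\ref{C.density} --- is circular for this purpose: density in $\bH^1$ gives no control whatsoever on the higher angular derivatives of $w$, which is precisely what the estimate is supposed to produce. The standard and correct device, which the paper uses, is to replace $\partial_{\te^j}$ in the test function by finite-difference quotients (alternatively, a Galerkin argument), so that the would-be test function stays in $\bH^1_0$ without presupposing regularity. The paper also replaces the coordinate derivatives $\partial_{\te^j}$ by a finite family of compactly supported vector fields $Y_j=\sum_k Y_{jk}(\te)\partial_{\te^k}$ spanning $T\SS^{n-2}$, since the $\partial_{\te^j}$ are not global; your sketch silently passes over this, which is routine but worth saying.

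\emph{Treatment of the commutator terms.} Your tangential inequality carries an error term $c\,\|D_\te^l w\|_{\bL^2}/a$ on the right which you then propose to ``absorb by Poincar\'e''. As written this absorbs into $a^{-1}\|D_\te^l \bD_\xi w\|_{\bL^2}$ only after paying an extra factor of $a$, so the constant stays under control only for $a$ small --- which is not assumed in the statement of Theorem~\ref{T.ellip} (smallness of $a$ is reserved for Corollary~\ref{C.ellip}). The paper instead organizes the commutator terms as bilinear expressions in $(D_\te w, D_\te^2 w)$ and in $(a^{-1}\bD_\xi w, a^{-1}D_\te\bD_\xi w)$, applies Young's inequality to split off a small multiple of the top-order terms, and controls the remaining lower-order factors by the $\bH^1_a$ estimate already in hand. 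That gives the $a$-independent constant $c_m$ directly, without any smallness restriction. The same mechanism iterates in $m$, each level using the preceding one rather than Poincar\'e.

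With these two corrections the rest of your outline --- the algebraic recovery of the mixed $\bD_\xi^{(k)}D_\te^l$ derivatives from the equation and the bookkeeping of powers of $a$ through the norms~\eqref{norms-a} --- matches the paper's argument.
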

\begin{proof}
% It follows from Corollary~\ref{C.density} that if $w\in\bH^1_0$, then 
% \[
% \|w\|_{\bH^1}:=\|w\|_{\bL^2} + \|\bD_xw\|_{\bL^2} +\|D_\te
% w\|_{\bL^2}\leq \frac Ca\|\bD_\xi w\|_{\bL^2}+\|D_\te w\|_{\bL^2}\,.
% \]
% The symmetric bilinear form $B:\bH^1_0\times \bH^1_0\to\RR$ given by
% \[
% B[w,v]:=\int_{(0,1)\times \SS^{n-2}}  \bigg(\frac{\bD_\xi w\, \bD_\xi v}{a^2} +\nabla_\te w\cdot \nabla_\te
% v\bigg)\, \xi\, d\xi\, d\te
% \]
% is then coercive, so the Riesz representation theorem ensures that for
% each $F\in \bL^2$ there is a unique $w\in \bH^1_0$ satisfying
% \[
% B[w,v]=\int_{(0,1)\times \SS^{n-2}} F v \, \xi\, d\xi\, d\te
% \]
% for all $v\in \bH^1_0$. The existence and uniqueness of weak solutions
% follow,
The $\bH^1_a$~estimate
\begin{equation}\label{H1ellip}
\frac{\|\bD_\xi w\|_{\bL^2}}a+\|D_\te w\|_{\bL^2}\leq c\|F\|_{\bL^2}
\end{equation}
follows immediately from the identity~\eqref{ellip2b} after taking $v=w$.
This implies that $\|w\|_{\bL^2}\leq ca\|F\|_{\bL^2}$ by Corollary~\ref{C.density}.

Let us now prove the $\bH^2_a$~estimates
\[
\frac{\|\bD_\xi ^* \bD_\xi  w\|_{\bL^2}}{a^2} + \frac{\|D_\te\bD_\xi  w\|_{\bL^2}}a +\|D_\te^2 w\|_{\bL^2}\leq c\|F\|_{\bL^2}\,.
\]
To explain the gist of the method, suppose we formally take $v=-Y_j^*Y_jw$ in
the identity~\eqref{ellip2b}, where $Y_j:=\pd_{\te^j}$ and $Y_j^*$ is its
formal adjoint, and sum over $j$. After integrating by parts we find that
\begin{align*}
I&:=\sum_{j=1}^{n-2}\int \bigg[\frac{\bD_\xi w\, \bD_\xi (Y_j^*Y_jw)}{a^2} +\nabla_\te w\cdot \nabla_\te
(Y_j^*Y_jw)\bigg]\, \xi\, d\xi\, d\te\\
&=\sum_{j=1}^{n-2}\int\bigg[\frac{(\bD_\xi Y_j
w)^2}{a^2}+|\nabla_\te(Y_jw)|^2+\Ga_j(D_\te w,D_\te Y_j w)\\
&\qquad \qquad \qquad \qquad \qquad \qquad \qquad \qquad \qquad +\tilde \Ga_j\bigg(\frac{\bD_\xi w}a,\frac{D_\te\bD_\xi w}a\bigg)\bigg]\, \xi\, d\xi\, d\te\,,
\end{align*}
where $\Ga_j,\tilde\Ga_j$ are bilinear functions of their entries that
depend smoothly on~$x$ and~$\te$. Therefore we find
\begin{align}
I&\geq \frac{\|D_\te\bD_\xi  w\|_{\bL^2}^2}{a^2}+ \|D_\te^2 w\|_{\bL^2}^2- c \|D_\te 
w\|_{\bL^2} \|D_\te^2 w\|_{\bL^2}- c \frac{\|\bD_\xi w\|_{\bL^2}}a \frac{\|D_\te\bD_\xi w\|_{\bL^2}}a \notag\\
&\geq c\bigg(\frac{\|D_\te\bD_\xi  w\|_{\bL^2}^2}{a^2} + \|D_\te^2 w\|_{\bL^2}^2- c \|F\|_{\bL^2} ^2\bigg)\label{I1ellip}
\end{align}
where we have used the estimate~\eqref{H1ellip} for the first
derivatives of $w$ and the elementary identity $AB\leq \de
A^2+B^2/(4\de)$. Since, by~\eqref{ellip2b},
\[
I=\sum_{j=1}^{n-2}\int F \, Y_j^*Y_jw \, \xi\, d\xi\, d\te\leq c\|F\|_{\bL^2}\|D_\te^2w\|_{\bL^2}\,,
\]
using~\eqref{I1ellip} we arrive at
\begin{equation}\label{H2ellip1}
\frac{\|D_\te\bD_\xi  w\|_{\bL^2}}a +\|D_\te^2 w\|_{\bL^2}\leq c\|F\|_{\bL^2}\,.
\end{equation}
Of course, this calculation does not make sense in this form. First of
all, one cannot take $v=Y_j^*Y_jw$ because $w$ is not, a priori,
differentiable enough. However, it is standard that this difficulty
can be overcome by replacing the partial derivatives~$\pd_{\te^j}$ in
the expression of $Y_j$ by finite differences (see e.g.~\cite{Evans}). The second problem is
that the derivatives $\pd_{\te^j}$ are not well-defined globally, as
the coordinate $\te^j$ is just local. It is well known too that this
can be circumvented either by considering a family of vector fields of the
form 
\[
Y_j=\sum_{k=1}^{n-2} Y_{jk}(\te)\, \pd_{\te^k}\,,
\] 
where $Y_{jk}$ is a smooth function supported in a coordinate patch of
the sphere $\SS^{n-2}$, and taking as many vector fields of this
form as necessary to ensure that they span the whole tangent plane at
each point of the sphere, or by using a partition of unity.  For brevity, we will skip these cumbersome
but standard details.

Once we have established the estimates~\eqref{H2ellip1}, the estimate
for $\bD_\xi ^*\bD_\xi  w$ is easy: it suffices to isolate this term in the
equation to write
\[
\frac{\|\bD_\xi ^*\bD_\xi  w\|_{\bL^2}}{a^2}=\|F+\De_\te w\|_{\bL^2}\leq \|F\|_{\bL^2}+
c\|D_\te^2 w\|_{\bL^2}\leq c \|F\|_{\bL^2}\,.
\]

The proof of the higher order estimates uses the same set of
ideas. Basically, additional regularity in $\te$ is recovered by integrating by
parts in the identity~\eqref{ellip2}. For example, if $F\in \bH^1$ one
would obtain the estimate
\[
\frac{\|D_\te^2\bD_\xi w\|_{\bL^2}}{a}+\|D_\te^3w\|_{\bL^2}\leq
c\big(\|F\|_{\bL^2}+\|D_\te F\|_{\bL^2}\big)
\]
from the identity~\eqref{ellip2} after taking $v=Y_j^*Y_k^*Y_kY_jw$, with
the same caveats as above. On the other hand,  additional (twisted)
derivatives with respect to~$x$ are recovered by isolating 
\[
\frac{\bD_\xi ^*\bD_\xi w}{a^2}=F+\De_\te w
\]
and then taking as many derivatives as needed in this equation:
\begin{align*}
  \frac{\|\bD_\xi^{(3)} w\|_{\bL^2}}{a^3}&=\|\bD_\xi  F+\De_\te \bD_\xi 
  w\|_{\bL^2}\leq \|\bD_\xi   F\|_{\bL^2}+\|\De_\te \bD_\xi   w\|_{\bL^2}\\
  &\leq c\big(\|F\|_{\bL^2}+\|D_\te F\|_{\bL^2} +\|\bD_\xi 
  F\|_{\bL^2}\big)\,.
\end{align*}
For $F$ with a larger number $m$ of derivatives in $\bL^2$, one would
repeat this process $m$~times, increasing by two the number of angular
derivatives taken in the function~$v$ and differentiating the equation
with respect to~$\xi$ after that using $\bD_\xi $ or $\bD_\xi ^*$
alternatively. The only things one has to pay attention to is that the
twisted derivative $\bD_\xi $ and its adjoint $\bD_\xi ^*$ appear in
the right places and that the powers of~$a$ do appear as in the
$\|\cdot\|_{\bH^{m+2}_a}$ norm. Details are largely straightforward and will be
omitted.
\end{proof}

The energy estimate at infinity for the full elliptic operator $L_g$
arises now as an easy corollary to Theorem~\ref{T.ellip}. We can
safely assume that the parameter $a$ is small.

\begin{corollary}\label{C.ellip}
  Let $w\in \bH^1_0$ be a weak solution of the equation $L_g w=F$ in $(0,a)\times\SS^{n-2}$,
  with $F\in \bH^m$ for some $m\geq0$. Then $w\in \bH^{m+2}$ and satisfies the estimate
\begin{equation*}
\|w\|_{\bH^{m+2}_a}\leq c_m\|F\|_{\bH^m_1}
\end{equation*}
with a constant $c_m$ independent of~$a$.
\end{corollary}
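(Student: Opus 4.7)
The plan is to derive the corollary from Theorem~\ref{T.ellip} by treating the full operator $L_g$ as a perturbation of the model operator $-\bD_x^*\bD_x + \De_\te$ already handled there. Using the decomposition~\eqref{Lg}, one rewrites the equation $L_g w = F$ in the form
\begin{equation*}
\bD_x^*\bD_x w - \De_\te w \;=\; -F + \cR w\,,\qquad \cR w \;:=\; \cO(x)\, D_{x\te}w + \cO(x^2)\, D_{x\te}^2 w\,,
\end{equation*}
and applies Theorem~\ref{T.ellip} with right-hand side $-F + \cR w$ to obtain
\[
\|w\|_{\bH^{m+2}_a} \;\leq\; c_m\,\bigl(\|F\|_{\bH^m_1} + \|\cR w\|_{\bH^m_1}\bigr)\,.
\]

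The heart of the argument is then a perturbative bound
\[
\|\cR w\|_{\bH^m_1} \;\leq\; \eta(a)\, \|w\|_{\bH^{m+2}_a}\,,
\]
with $\eta(a)\to 0$ as $a \to 0$; once this is in hand, choosing $a$ small enough that $c_m\eta(a) \leq 1/2$ allows absorption of the perturbative term and yields the claim. The mechanism is that each factor of~$x$ appearing in~$\cR$ is pointwise bounded by~$a$, so the $\cO(x)$ and $\cO(x^2)$ prefactors generate one and two factors of $a$, respectively. To convert the non-twisted second-order derivatives present in $\cO(x^2)\, D_{x\te}^2 w$ into the twisted ones that define $\bH^{m+2}_a$, one employs the algebraic identity
\[
x^2\, \pd_x^2 w \;=\; -x^2\,\bD_x^*\bD_x w - x\,\pd_x w + \al^2\, w\,,
\]
combined with the Hardy-type estimates of Theorem~\ref{T.estA} and the representation $w = A(\bD_x w)$ (valid for $w \in \bH^1_0$) provided by Proposition~\ref{P.vDv}, which trade each singular power $1/x$ for an extra twisted radial derivative falling into a norm appearing in~$\|w\|_{\bH^{m+2}_a}$.

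The main technical obstacle is the bookkeeping for $m \geq 1$: one iterates the twisted Leibniz rule $\bD_x(fg) = (\pd_x f)\,g + f\,\bD_x g$ and its angular analog, and each $\pd_x$-derivative hitting an $\cO(x^j)$ coefficient strictly degrades the available $x$-power (the $k$-th derivative is only $\cO(x^{j-k})$, hence singular when $k>j$). After systematically using the Hardy estimates of step above to convert such singular factors into additional twisted derivatives of~$w$, one is left with terms of the form $x^p\, D_\te^i\bD_x^{(q)} w$ which are bounded by $a^p$ times a quantity appearing in $\|w\|_{\bH^{m+2}_a}$ with the appropriate intrinsic $a$-weight from the definition~\eqref{norms-a}. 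The powers balance precisely because the two factors of $x$ in the leading $\cO(x^2)\,D_{x\te}^2 w$ term compensate for the two extra orders of differentiation in going from $\bH^m_1$ to $\bH^{m+2}_a$. The base case $m = 0$ is checked directly---yielding $\|\cR w\|_{\bL^2} \leq Ca\,\|w\|_{\bH^2_a}$ straight from the identity above and Proposition~\ref{P.vDv}---and the higher~$m$ are treated by a routine induction parallel to the one used in the proof of Theorem~\ref{T.ellip}.
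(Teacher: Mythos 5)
Your proposal is correct and follows essentially the same route as the paper: treat $L_g$ as a perturbation of the model operator $-\bD_x^*\bD_x+\De_\te$, apply Theorem~\ref{T.ellip} to the resulting equation, bound the remainder $\cR w$ in $\bH^m_1$ by $\cO(a)\,\|w\|_{\bH^{m+2}_a}$, and absorb it for $a$ small. The only stylistic difference is that the paper passes immediately to the rescaled variable $\xi=x/a$ and expands $L_g$ directly in terms of $\bD_\xi$, $\bD_\xi^*\bD_\xi$, $D_\te\bD_\xi$, etc., which makes the $a$-bookkeeping visible at once, whereas you keep the $\cO(x)\,D_{x\te}w+\cO(x^2)\,D_{x\te}^2w$ form and carry out the conversion to twisted derivatives afterwards via the identity $x^2\pd_x^2w=-x^2\bD_x^*\bD_xw-x\pd_xw+\al^2w$ and the Hardy estimates; both yield the same terms. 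One small point worth being aware of, though neither you nor the paper address it explicitly: to apply Theorem~\ref{T.ellip} with right-hand side $-F+\cR w$ you need $\cR w\in\bH^m$, which for $m\geq1$ requires more regularity on $w$ than the a priori $\bH^1_0$; this is handled by the usual bootstrap (establish $\bH^2$, then $\bH^3$, etc.), which is implicit in the paper's ``which proves the result provided $a$ is small enough'' and should be mentioned if one wants the induction in $m$ to be airtight.
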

\begin{proof}
It follows from Eq.~\eqref{Lg} that
\begin{multline*}
L_gw=-\frac{\bD_\xi^*\bD_\xi w}{a^2}+ \De_\te w + \cO(1)\, \bD_\xi w +
\cO(1)\, \bD_\xi^* \bD_\xi w\\
+ \cO(a)\, D_\te \bD_\xi w+\cO(a)\,
D_\te w+ \cO(a^2)\,
  D_{x\te}^2w+ \cO(1) w\,.
\end{multline*}
From this expression and the definition of the $\bH^m_a$ norms, it then follows that the function $w$ satisfies the equation
\[
-\frac{\bD_\xi^*\bD_\xi w}{a^2}+ \De_\te w=\tilde F
\]
with a function satisfying
\begin{align*}
\|\tilde F\|_{\bH^m_1}\leq \|F\|_{\bH^{m}_1} + ca\|w\|_{\bH^{m+2}_a}\,.
\end{align*}
Hence Theorem~\ref{T.ellip} yields the estimate
\[
\|w\|_{\bH^{m+2}_a}\leq c\|F\|_{\bH^{m}_1} + ca\|w\|_{\bH^{m+2}_a}\,,
\]
which proves the result provided $a$ is small enough (that is, $a$
smaller than some positive constant independent of $F$).
\end{proof}

% It is easy to check using the estimate in Theorem~\ref{T.ellip} that
% the differential operator $\bD_x^*\bD_x-\De_\te$ defines a positive
% self-adjoint operator in $\bL^2$ with domain $\bH^2\cap \bH^1_0$.

% \begin{corollary}\label{C.selfadj}
% The differential operator $\bD_x^*\bD_x-\De_\te$ defines a 
% self-adjoint operator in $\bL^2$ (let's call it $\La^2$) with domain
% $\bH^2\cap \bH^1_0$, and satisfies $\La^2> C$ for some positive constant
% independent of~$a$. Its
% positive square root,~$\La$, has
% domain $\bH^1_0$. Furthermore, given functions  $u\in
% \bH^2\cap \bH^1_0$ and $v\in \bH^1_0$, the norms of their derivatives
% can be estimated as
% \begin{align*}
% C_1\|\La v\|_{\bL^2} &\leq \|v\|_{\bH^1_a}\leq C_2\|\La v\|_{\bL^2}\,,\\[1mm]
% C_1\|\La^2 u\|_{\bL^2} &\leq \|v\|_{\bH^2_a} \leq C_2 \|\La^2 u\|_{\bL^2}
% \end{align*}
% with constants that do not depend on $a$.
% \end{corollary}

% \begin{remark}\label{R.La}
%   Notice that, for $m\geq3$, it is not true that the $\bH^m$~norm of a
%   function in $\bH^m\cap \bH^1_0$ be controlled in terms of the
%   $\bL^2$~norm of its $m^{\text{th}}$ order (twisted) derivatives, as one would
%   need more stringent boundary conditions (an easy
%   counterexample, for $\al<1$ and any $m\geq3$, is $u:=a^{2-\al}x^\al-a^\al
%   x^{2-\al}$). Of course, for $m\geq3$ the action of $\La^m$ on $\bH^m\cap \bH^1_0$
%   is not even defined.
% \end{remark}

\section{Wave propagation at infinity}\label{WaveProp}
\label{S.local}

Now that we have proved the estimates at infinity for the elliptic part $L_g$ of the wave operator $P_g$ in an asymptotically AdS chart, we are ready to move on to the next step in our proof of well-posedness, which is concerned with solving the following Cauchy problem in the
region $\{|t|<T\}$ of an asymptotically AdS patch:
\begin{subequations}\label{CauchyAdS}
\begin{align}
&P_gv=F(t,x,\te)\quad \text{in
}(-T,T)\times (0,a)\times \SS^{n-2}\,,\\
& v(0)=v_0\,,\quad \pd_t v(0)=v_1\,.
\end{align}
\end{subequations}
Here $T$ is an arbitrary real constant and we will supplement this
problem with the homogeneous Dirichlet boundary conditions~\eqref{Dirichlet}. We will use the notation 
\[
H^k_t\bH^l:=H^k((-T,T),\bH^l)
\]
and similarly for other mixed Sobolev or Lebesgue spaces. We will say a function
$F(t,x,\te)$ belongs to the spacetime Sobolev space $\bH^m_{tx\te}$
of order $m$ if $F\in H^k_t\bH^{m-k}$ for all $0\leq k\leq m$. The
corresponding norm will be denoted by $\|\cdot\|_{\bH^m_{tx\te}}$.

Obviously, the non-standard part of Eq.~\eqref{CauchyAdS} is
hidden in the twisted derivatives $\bD_x$. The key point is that, with
some work, the energy associated with the twisted Sobolev spaces
$\bH^m$ allows us to overcome this difficulty. In what follows we will
see how one can implement this idea. Again, we assume throughout this
section that $a$ is suitably small.

\begin{theorem}\label{P.H1AdS}
For any $F\in L^1_t\bL^2$, there is a unique weak solution $v\in
L^2_t\bH^1_0\cap H^1_t\bL^2$ to the
problem~\eqref{CauchyAdS}. Moreover, it satisfies the energy estimate
\begin{equation}\label{H1Pg}
\|v\|_{L^\infty_t \bH^1_a}+\|\pd_tv\|_{L^\infty_t\bL^2}\leq c\,\e^{caT}\bigg(
\|v_0\|_{ \bH^1_a}+\|v_1\|_{\bL^2}+ \int_{-T}^T\|F(t)\|_{\bL^2}\, dt\bigg)
\end{equation}
with a constant $c$ independent of $a$ and $T$.
\end{theorem}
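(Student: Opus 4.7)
The plan is to derive an energy identity adapted to the twisted derivative $\bD_x$, absorb the lower-order error terms using the smallness of the patch width $a$, apply Gronwall, and then construct solutions via a Galerkin scheme based on the Dirichlet eigenfunctions of $L_g$.

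The natural energy for the problem is
\[
E(t):=\tfrac12\big(\|\pd_tv(t)\|_{\bL^2}^2+\|\bD_xv(t)\|_{\bL^2}^2+\|\nabla_\te v(t)\|_{\bL^2}^2\big),
\]
whose square root controls $\|v(t)\|_{\bH^1_a}+\|\pd_tv(t)\|_{\bL^2}$, since the piece $\|v\|_{\bL^2}/a$ of $\|v\|_{\bH^1_a}$ is bounded by $\|\bD_xv\|_{\bL^2}$ via the Hardy-type inequality in Corollary~\ref{C.density}. The key structural observation is that the dominant part of $P_g$ is $-\pd_t^2-\bD_x^*\bD_x+\De_\te$ and that $\bD_x^*$ is the $\bL^2$-adjoint of $\bD_x$. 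Multiplying $P_gv=F$ by $\pd_tv$ against the weighted measure $x\,dx\,d\te$ and using the fact that $\pd_tv(t)\in\bH_0^1$ to discard the boundary contributions from the integration by parts, I would obtain
\[
\frac{dE}{dt}=R(t)-\int F\,\pd_tv\,x\,dx\,d\te,
\]
where $R(t)$ collects the contributions of the $\cO(x)\,D_{tx\te}v$ and $\cO(x^2)\,D_{x\te}D_{tx\te}v$ error terms in $P_g$, together with the first-order time-mixed derivatives coming from the non-diagonal metric components.

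The next step is to show $|R(t)|\leq c\,a\,E(t)$. The first-order error terms $\cO(x)\,D_{tx\te}v\cdot\pd_tv$ are handled straightforwardly using $x\leq a$, Cauchy--Schwarz, and the identity $\pd_xv=\bD_xv-\al v/x$ combined with the Hardy bound of Corollary~\ref{C.density} to convert the pointwise $v/x$ into a control by $\|\bD_xv\|_{\bL^2}$. The genuinely delicate contribution is the second-order term $\cO(x^2)\,D_{x\te}D_{tx\te}v\cdot\pd_tv$, which formally involves second derivatives of $v$: I would integrate by parts once to redistribute one derivative onto $\pd_tv$, picking up first-order quantities that, together with the $\cO(x^2)$ prefactor (bounded by $ax$), can be absorbed into $E(t)$ at the price of the prefactor $a$. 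Combined with the trivial estimate $\big|\int F\,\pd_tv\,x\,dx\,d\te\big|\leq\|F(t)\|_{\bL^2}\sqrt{2E(t)}$, the outcome is the differential inequality
\[
\frac{d}{dt}\sqrt{E(t)}\leq c\,a\sqrt{E(t)}+c\,\|F(t)\|_{\bL^2},
\]
and Gronwall's lemma produces the bound~\eqref{H1Pg}.

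To obtain existence, I would construct approximate solutions by Galerkin projection onto the span of the first $N$ Dirichlet eigenfunctions of $L_g$; the elliptic theory of Section~\ref{S.elliptic} together with the compact embedding $\bH_0^1\hookrightarrow\bL^2$ furnished by Proposition~\ref{P.compactness} guarantees that such a basis exists. Each projected equation is a linear second-order ODE system in time with a unique global solution $v^N$, and the energy estimate above applies uniformly in $N$; weak-$\ast$ compactness then yields a weak limit $v\in L^\infty_t\bH_0^1\cap W^{1,\infty}_t\bL^2$ that solves the Cauchy problem, while uniqueness is immediate from the estimate applied to the difference of two solutions. The main technical obstacle is the careful bookkeeping in $R(t)$, in particular extracting the prefactor $a$ from the $\cO(x^2)\,D_{x\te}D_{tx\te}v$ term without breaking the $\bH^1$-control of $v$, since this is the only source of second-order space-time derivatives in the energy identity; once this is handled, everything else is routine.
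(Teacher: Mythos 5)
Your overall strategy coincides with the paper's: derive a differential inequality for an energy functional, apply Gronwall, and build solutions by Galerkin projection. Two technical points, however, are handled incorrectly in the proposal and would need to be repaired.

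The first concerns the ``genuinely delicate'' second-order term $\cO(x^2)\,D_{x\te}D_{tx\te}v\cdot\pd_tv$. You assert that after a single integration by parts one is left with ``first-order quantities'' that can be pointwise absorbed into $c\,a\,E(t)$. That is not what happens. Take, for instance, $\int\cO(x^2)\pd_x^2v\cdot v_t\,x\,dx\,d\te$. Integrating by parts in $x$ produces $-\int\cO(x^2)\pd_xv\cdot\pd_xv_t\,x\,dx\,d\te$ plus lower-order terms. The surviving main term contains $\pd_xv_t$, a mixed second derivative that is \emph{not} controlled by your energy
\[
E(t)=\tfrac12\big(\|v_t\|_{\bL^2}^2+\|\bD_xv\|_{\bL^2}^2+\|\nabla_\te v\|_{\bL^2}^2\big),
\]
so it cannot be bounded by $c\,a\,E(t)$ pointwise in time. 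The correct observation is that this term is $-\tfrac12\pd_t\int\cO(x^2)(\pd_xv)^2\,x\,dx\,d\te$ up to lower-order corrections, i.e.\ a total time derivative of a quadratic form in first derivatives. That piece cannot be estimated; it has to be transferred to the left-hand side of the energy identity, i.e.\ folded into a \emph{corrected energy} that is equivalent to $E(t)$ for small $a$. This is precisely what the paper does, but pre-emptively: its energy functional in~\eqref{energy} is the full variable-coefficient bilinear form of the spatial part of $P_g$, including the cross term $2ab^i\,\pd_\xi v\,\pd_{\te^i}v$ that absorbs the divergence-form corrections $a\pd_{\te^i}^*(b^i\pd_\xi v)+a\pd_\xi^*(b^i\pd_{\te^i}v)$. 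Having done so, $\pd_tE[v](t)$ only produces commutator terms (with coefficients differentiated in $t$), all genuinely lower-order. Without this modification of the energy your differential inequality does not close.

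The second gap is in the Galerkin scheme: you propose to project onto eigenfunctions of $L_g$. But the coefficients of $L_g$ depend on $t$, so its eigenfunctions at one time have no reason to remain an invariant subspace at other times; there is no fixed eigenbasis to project onto. The paper circumvents this by using the \emph{time-independent} constant-coefficient operator $a^{-2}\bD_\xi^*\bD_\xi-\De_\te$ with Dirichlet conditions, whose eigenfunctions do provide a fixed basis (and whose self-adjointness with compact resolvent follows from the Hardy inequality of Corollary~\ref{C.density} and the compact embedding of Proposition~\ref{P.compactness}). You also do not need to appeal to the elliptic estimates of Section~\ref{S.elliptic} for this; those are needed later, for the higher-order estimates of Theorem~\ref{T.higherwave}.

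Finally, a minor but relevant point you omit: the paper records that the coefficients $b^0,b^i,\tilde b^j$ vanish at $\xi=0$, which is exactly what kills the boundary terms in the integrations by parts involving $\pd_\xi(b^0v_t)$ and its companions. Your appeal to $\pd_tv\in\bH_0^1$ is not by itself sufficient for these terms, since $\bH_0^1$ only forces vanishing of $x^\al v$ at the inner boundary; the vanishing of the coefficient is what actually makes the boundary contribution drop out.
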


\begin{proof}
It follows from Eq.~\eqref{eq-u} that the equation $P_gv=F$ can be
written as
\begin{multline}\label{energye}
\pd_{t}^2v+\frac{\bD_\xi^*(b\bD_\xi
  v)}{a^2}+\pd_{\te^i}^*(\ga^{ij}\pd_{\te^j} v) +
a\pd_{\te^i}^*(b^i \pd_\xi v)+a\pd_\xi^*(b^i \pd_{\te^i} v) \\
=F+a\pd_\xi(b^0\pd_t v)+ a\pd_{\te^i}(\tilde b^i\,\pd_tv) + \cO(1)\bD_\xi v +\cO(a)\,
D_{t\te}v+\cO(1) v\,,
\end{multline}
where $\pd_{\te^i}^*$ and $\pd_\xi^*$ are the formal adjoints of the partial derivatives
$\pd_{\te^i},\pd_\xi$ with respect to the measure $\xi\,d\xi\, d\te$ and
\begin{gather*}
b=1+\cO(a^2)\,,\quad \ga^{ij}=(g_{\SS^{n-2}})^{ij}+\cO(a^2)\,,\quad
b^0,b^i,\tilde b^j=\cO(1)\,.
\end{gather*}
Notice that $b^0$, $b^i$ and $\tilde b^j$ vanish on $\{\xi=0\}$.
Let us define the energy as
\begin{equation}\label{energy}
E[v](t):=\int_{(0,1)\times\SS^{n-2}} \bigg(v_t^2+\frac{b}{a^2}(\bD_\xi
  v)^2+\ga^{ij}\pd_{\te^i} v\, \pd_{\te^j}v+ 2a b^i \, \pd_\xi v\, \pd_{\te^i}
  v\bigg)\, \xi\, d\xi\, d\te\,.
\end{equation}
Since $a$ is small, a simple computation shows that $E(t)$ is
equivalent to the (squared) $\bH^1_a$ norm of $v(t)$ in the sense that
\begin{equation}\label{vE}
c_1  E[v](t)^{\frac12}\leq \|v(t)\|_{\bH^1_a}\leq c_2 E[v](t)^{\frac12}
\end{equation}
with constants independent of $a$.

To derive the energy estimates, in this section we will take derivatives of $v$ as if
it were smooth. It is standard that this can be justified either by
considering a smooth $v$ and the density property proved in Corollary~\ref{C.density} or
an argument using Garlerkin's method, in which we expand $v$ (e.g.) in the basis of
eigenfunctions of the operator $a^{-2}\bD_\xi^*\bD_\xi-\De_\te$ in
$(0,a)\times\SS^{n-2}$ with
Dirichlet boundary conditions. 

Using the boundary conditions to integrate by parts and the equation
satisfied by~$v$, a straightforward computation shows that
\begin{multline*}
\pd_t E[v](t)=2\int v_t\Big(F -a\pd_\xi(b^0v_t)-
a\pd_{\te^i}(\tilde b^iv_t)\\
 +\cO(1)\bD_\xi v +\cO(a)\,
D_{t\te}v+\cO(1) v\Big)\, \xi\, d\xi\, d\te\,.
\end{multline*}
All the terms but the second and third ones can be immediately
controlled as
\begin{multline*}
\int v_t\Big(F +\cO(1)\bD_\xi v +\cO(a)\,
D_{t\te}v+\cO(1) v\Big)\, \xi\, d\xi\, d\te\\
\leq
\|F(t)\|_{\bL^2}E[v](t)^{\frac12}+ caE[v](t)\,,
\end{multline*}
where we have used that $\|v(t)\|_{\bL^2}\leq C\|\bD_\xi
v(t)\|_{\bL^2}\leq caE(t)^{1/2}$ by Proposition~\ref{C.density}. The
second term can be estimated as
\begin{align*}
\int v_t\pd_\xi(b^0v_t) \, \xi\, d\xi\, d\te&=\int \pd_\xi
b^0\,v_t^2\, \xi \,d\xi\, d\te+\frac 12\int b^0 \pd_\xi(v_t^2) \, \xi\,
d\xi\, d\te\\
&=\int \Big[\xi\,\pd_\xi
b^0-\frac12\pd_\xi(\xi b^0)\Big]v_t^2\, d\xi\, d\te\leq c E[v](t)\,,
\end{align*}
where we have used that $b^0$ vanishes at $\xi=0$. The third term can
be controlled in a similar fashion, so we arrive at 
\begin{equation}\label{varEv}
\pd_t E[v](t)\leq c\|F(t)\|_{\bL^2}E[v](t)^{\frac12}+ caE[v](t)\,,
\end{equation}
which is well known to imply that
\[
E[v](t)^{\frac12}\leq c\bigg(E[v](0)^{\frac12} +\int_0^t\|F(\tau)\|_{\bL^2}d\tau\bigg)\e^{cat}
\]
by Gronwall's inequality. (Here and in what follows, we follow the
convention that the above integral is nonnegative independently of the
sign of $t$.)

Due to the equivalence of norms~\eqref{vE}, this yields the desired energy estimate~\eqref{H1Pg}. 
It is standard that this energy estimate readily implies
that there is at most a unique weak solution in $L^2_t\bH^1\cap
H^1_t\bL^2$ to the equation (see e.g.~\cite{Lady}). It also ensures that Garlerkin's method~\cite{Lady} converges
to a weak solution of the problem~\eqref{CauchyAdS}, thereby proving
the existence of a unique solution to the problem.
\end{proof}

%\begin{remark}
%CHECK THAT WE THE METRIC IS STATIONARY (=NO MIXED DERIVATIVES WITH
%TIME AND KILLING), THEN THE EXPONENTIAL FACTOR CAN BE REMOVED. BUT IT
%SHOULD NOT BE ESSENTIAL.
%\end{remark}

As is well known from the case of bounded domains in Euclidean space,
to obtain higher order estimates we need to impose some compatibility
conditions on the source function $F$ and the initial conditions, in
addition to the regularity assumptions. In the case of
$\bH^{m+1}$ estimates, the compatibility conditions correspond to the
vanishing of the trace to the boundary of the functions
\begin{equation}\label{vj}
v_j:=\pd_t^jv(0)
\end{equation}
for all
$j\leq m$ (that is, $\cB_jv\in\bH^1_0$). Using Eq.~\eqref{CauchyAdS}, it is
apparent that $v_j$ can be written in terms of the initial
conditions $v_0,v_1$ and the function $F$. A more careful look at the
algebraic structure of these functions reveals that 
\begin{equation}\label{vj2}
\|v_j\|_{\bH^k}\leq C_{j,k}\bigg(\sum_{l=0}^{j-2}
\|F\|_{W^{l,\infty}_t\bH^{k+j-2-l}}+ \|v_0\|_{\bH^{k+j}}+\|v_1\|_{\bH^{k+j-1}}\bigg)\,.
\end{equation}

% \begin{proposition}\label{P.vj} TO BE REMOVED.
% Suppose that the initial conditions are $v_0=v_1=0$. Then, for all
% $m\geq2$, the function $v_m$ satisfies
% \[
% \big\|D_\te^j \bD_\xi^{(k)}v_m\big\|_{\bL^2}\leq c\sum_{p+q+r\leq
%   m-2}\frac{\|D_\te^{j+p}\bD_\xi^{(k+q)} \pd_t^rF\|_{L^\infty_t\bL^2}}{a^q}
% \]
% with a constant independent of $a$.
% \end{proposition}

%CHANGE The following higher order estimates will be suitable for our
%purposes. Notice that in the statement we are no longer being careful
%about the dependence on the parameter $a$ of the various constants,
%since it will not be necessary for the applications that we will
%present in later sections. The concrete dependence on $a$, however,
%readily stems from the proof of the theorem. NORMS CONVENIENT LATER ON

In the next theorem, we obtain mixed energy estimates on the higher-order time derivatives of $v$ solving Eq.~\eqref{CauchyAdS}, assuming that the source term $F$, the boundary data $v_0,\,v_{1}$ and their higher time derivatives belong to suitably chosen Sobolev classes. 
\begin{theorem}\label{T.higherwave}
Let us fix an integer $m\geq1$. Assume that $F\in \bH^m_{tx\te}$,
$v_0\in \bH^{m+1}$, $v_1\in \bH^m$ and
that $v_j\in \bH^1_0$ for all $0\leq j\leq m$. Then the solution to
the problem~\eqref{CauchyAdS} is of class
\[
v\in \bH^{m+1}_{tx\te}\cap H^m_t \bH^1_0
\]
and satisfies
\begin{multline*}
\sum_{l=0}^{m+1}\big\|\pd_t^lv\big\|_{L^\infty_t \bH^{m+1-l}}\leq C_T\bigg(\|\pd_t^mF\|_{L^2_t\bL^2} +\sum_{j=0}^{m-1}\|\pd_t^jF\|_{L^\infty_t \bH^{m-1-j}} \\
+\|v_0\|_{\bH^{m+1}}+\|v_1\|_{\bH^m}\bigg)\,.
\end{multline*}
\end{theorem}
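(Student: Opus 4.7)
I would argue by induction on~$m$. The base case $m=0$ is a direct application of Theorem~\ref{P.H1AdS} (on the finite time interval $(-T,T)$ the $L^2_t\bL^2$ bound on~$F$ is stronger than the $L^1_t\bL^2$ bound required there, since $F\in\bH^0_{tx\te}=L^2_t\bL^2$). Assuming the statement for order $m-1$, I would establish the order-$m$ result in two stages: first, differentiate the equation $m$ times in time and invoke Theorem~\ref{P.H1AdS} to obtain an $L^\infty_t\bH^1$-bound on $\pd_t^m v$ and an $L^\infty_t\bL^2$-bound on $\pd_t^{m+1} v$; second, use the elliptic estimate of Corollary~\ref{C.ellip} to bootstrap these top-order time estimates into the full mixed space-time estimates, by descending induction on the number of time derivatives.

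\textbf{Top-order energy estimate.} Setting $w:=\pd_t^m v$ and differentiating $P_g v=F$ a total of $m$ times in~$t$ yields $P_g w=\pd_t^m F+[P_g,\pd_t^m]v$. Because the coefficients of $P_g$ visible in Eqs.~\eqref{eq-u} and~\eqref{energye} are smooth in all variables and their $t$-derivatives inherit the same $x$-decay as the original coefficients, the commutator $[P_g,\pd_t^m]v$ is a differential expression of total order at most $m+1$ involving only $\pd_t^j v$ for $j\leq m$, together with at most two spatial derivatives; by the inductive hypothesis its $L^2_t\bL^2$ norm is bounded by the prescribed data. The initial conditions $w(0)=v_m$ and $\pd_t w(0)=v_{m+1}$ lie in $\bH^1_0$ and $\bL^2$ respectively, by the compatibility assumption combined with~\eqref{vj2}, so Theorem~\ref{P.H1AdS} applied to the Cauchy problem for $w$ yields the desired bounds on $\pd_t^m v$ and $\pd_t^{m+1}v$.

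\textbf{Elliptic bootstrap.} For $0\leq k\leq m-1$, isolate $L_g$ in Eq.~\eqref{eq-u} to write
\[
L_g(\pd_t^k v)=\pd_t^k F+\pd_t^{k+2} v+R_k,
\]
where $R_k$ is a differential operator of order at most two in the spatial variables acting on $\pd_t^{k+1} v$ and $\pd_t^k v$, with coefficients of order $\cO(x)$ or $\cO(x^2)$. A descending induction on~$k$, starting from $k=m-1$ and using the top-order estimate as its base, shows that the right-hand side lies in $L^\infty_t\bH^{m-1-k}$: indeed, the Sobolev embedding $H^1_t\hookrightarrow L^\infty_t$ applied to $F\in\bH^m_{tx\te}$ gives $\pd_t^k F\in L^\infty_t\bH^{m-1-k}$; the term $\pd_t^{k+2} v$ is controlled at the previous step of the descent; and the remainder $R_k$ is controlled by the same, with the $\cO(x)$ and $\cO(x^2)$ prefactors absorbing any potential loss at the conformal boundary (thanks to the Hardy-type inequalities of Theorem~\ref{T.estA}). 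Corollary~\ref{C.ellip} then promotes this to $\pd_t^k v\in L^\infty_t\bH^{m+1-k}$ with the claimed estimate, closing the induction.

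\textbf{Main obstacle.} The most delicate point is the commutator estimate in the top-order stage, where one must verify that differentiating in time does not amplify the $x^{-1}$ and $x^{-2}$ singularities of $P_g$. This hinges on the asymptotically AdS structure (Definition~\ref{D.aAdS}), which confines the $t$-dependent part of the coefficients to order $\cO(x)$ or better, so that the commutator inherits the same singular behavior as $P_g$ itself and its $\bL^2$ norm is controlled by twisted norms of lower-order time derivatives. As in Theorem~\ref{P.H1AdS}, the formal time-differentiation of the equation is rigorously justified by a Galerkin approximation in the basis of eigenfunctions of $-a^{-2}\bD_\xi^*\bD_\xi-\De_\te$ with Dirichlet conditions, the compatibility assumption $v_j\in\bH^1_0$ being exactly what is needed for the approximating sequences to converge in the required topologies.
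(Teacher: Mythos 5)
The overall architecture you propose — differentiating the equation in time, invoking Theorem~\ref{P.H1AdS} for the top-order time derivative, then bootstrapping spatial regularity with Corollary~\ref{C.ellip} — is a clean and natural plan, and your Stage~2 (the descending elliptic bootstrap) is essentially the same device used in the paper's proof and in the proof of Corollary~\ref{C.ellip}. However, your Stage~1 has a genuine gap that the two-stage decoupling cannot repair, and that the paper's proof is carefully structured to avoid.

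The problem is in the commutator estimate. From Eq.~\eqref{eq-u}, the $t$-dependent part of $P_g$ is $\cO(x)\,D_{tx\te}+\cO(x^2)\,D_{x\te}D_{tx\te}$. Expanding $[\pd_t^m,P_g]v$ by the Leibniz rule, the $j=1$ term of the second-order piece produces contributions of the schematic form $\cO(x^2)\,D_{x\te}^2\,\pd_t^{m-1}v$ and $\cO(x^2)\,D_{x\te}\,\pd_t^{m}v$. To place these in $L^2_t\bL^2$ (or even $L^1_t\bL^2$, as needed to apply Theorem~\ref{P.H1AdS}), you would need $\pd_t^{m-1}v\in L^\infty_t\bH^2$ and $\pd_t^m v\in L^\infty_t\bH^1$ — but the inductive hypothesis at order $m-1$ furnishes only $\pd_t^{m-1}v\in L^\infty_t\bH^1$ and $\pd_t^m v\in L^\infty_t\bL^2$, each exactly one spatial derivative short. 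The $\cO(x^2)$ prefactor does not recover the missing derivative: it is essential for keeping the singular weights inside $\bL^2$, but it does not trade weight for regularity. This gap is already present at $m=1$, where the commutator $[\pd_t,P_g]v$ contains $\cO(x^2)\pd_x^2 v$ yet the base case only supplies $v\in L^\infty_t\bH^1$.

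The reason your Stage~1 cannot be patched by Stage~2 is a circularity: the $\bH^2$-control of $\pd_t^{m-1}v$ that Stage~1 needs is exactly one of the conclusions of Stage~2, and Stage~2 in turn takes Stage~1 as its base. The paper's proof breaks this circle by refusing to decouple: it considers the cumulative energy $E_k(t)=\sum_{j=0}^{k}E[\pd_t^j v](t)$ (carrying only first-order information on all time derivatives simultaneously) and proves the pointwise-in-$t$ elliptic a~priori estimate $\|\pd_t^j v(t)\|_{\bH^{k+1-j}_a}\leq c\bigl(E_k(t)+\text{data}\bigr)$ before running Gronwall. The higher spatial regularity is thus available at each time $t$ as a function of $E_k(t)$ itself, so the commutator can be absorbed into the differential inequality $\pd_tE_k\leq caE_k+\dots$ rather than needing to be bounded in advance. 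This simultaneous energy--elliptic coupling inside the Gronwall loop is the key idea missing from your proposal; without it, the induction on $m$ as you set it up does not close.
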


\begin{proof}
In fact, we will prove the more precise estimate
\begin{multline}\label{est-prop53}
\sum_{l=0}^{m+1}\big\|\pd_t^lv\big\|_{L^\infty_t \bH^{m+1-l}_a}\leq
c_T\,\bigg(\|\pd_t^mF\|_{L^1_t\bL^2} +\sum_{j=0}^{m-1}\|\pd_t^jF\|_{L^\infty_t \bH^{m-1-j}_1} \\
+\sum_{j=0}^{m-1}\|v_j\|_{\bH^1_a}+\|v_{m+1}\|_{\bL^2}\bigg)\,,
\end{multline}
where the constant is independent of $a$ and depends nicely on $T$.
Let us first prove the result for $m=1$, using the same notation as in
the proof of Proposition~\ref{P.H1AdS} without further notice. 

Consider the energy
\[
E_1(t):=E[v](t)+E[v_t](t)\,.
\]
The equation~\eqref{CauchyAdS} implies that for a.e.~$t$ the function $v$ satisfies the
elliptic equation
\[
L_gv=-v_{tt}+F+a\pd_\xi(b^0\pd_t v)+ a\pd_{\te^i}(\tilde b^i\,\pd_tv) + \cO(1)\bD_\xi v +\cO(a)\,
D_{t\te}v+\cO(1) v\,,
\]
whose RHS can be estimated in $\bL^2$ norm by
\[
c(E_1(t)^{\frac12}+\|F(t)\|_{\bL^2})
\]
on account of the equivalence~\eqref{vE} and the bounds for the
coefficients of the equation. By Theorem~\ref{T.ellip}, this yields
the a priori estimate 
\begin{equation}\label{ellipH2wave}
\|v(t)\|_{\bH^2_a}\leq c(E_1(t)^{\frac12}+\|F(t)\|_{\bL^2})
\end{equation}

Let us now compute the variation of $E_1(t)$. One can easily check
that
\begin{multline*}
\pd_t E_1(t)=2\int v_{tt}\Big(\pd_tF-[\pd_t,P_g]v -a\pd_\xi(b^0v_{tt})-
a\pd_{\te^i}(\tilde b^iv_{tt})\\
  +\cO(1)\bD_\xi v_t +\cO(a)\,
 D_{t\te}v_t+\cO(1) v_t\Big)\, \xi\, d\xi\, dt
\end{multline*}
Since
\[
\big\|[\pd_t,P_g]v(t)\big\|_{\bL^2}\leq ca\|v(t)\|_{\bH^2_a}\leq ca(E_1(t)^{\frac12}+\|F(t)\|_{\bL^2})
\]
by Eq.~\eqref{ellipH2wave} and the other terms are basically as in
Proposition~\ref{P.H1AdS}, one can then use
Eq.~\eqref{varEv} and argue as in the proof of the aforementioned
proposition, mutatis mutandis, to find that
\begin{equation}\label{secondeq}
\pd_t E_1(t)\leq ca E_1(t)+c\big( \|F(t)\|_{\bL^2}+
\|\pd_tF(t)\|_{\bL^2}\big)\, E_1(t)^{\frac12}\,,
\end{equation}
This readily yields 
\[
E_1(t)^{\frac12}\leq c\,\e^{cat}\bigg(E_1(0)^{\frac12}+\int_0^t\big(\|F(\tau)\|_{\bL^2}+ \|\pd_tF(\tau)\|_{\bL^2}\big)\,d\tau\bigg)\,,
\]
which, when combined with the elliptic estimate~\eqref{ellipH2wave}
and the norm equivalence~\eqref{vE}, proves the
estimate~\eqref{est-prop53} for $m=1$.

The general case is proved using the same
ideas. Basically, one considers the energies
\[
E_k(t):=\sum_{j=0}^k E[\pd_t^jv](t)\,,
\]
with $k\leq m$. By taking time derivatives in the equation and
using elliptic estimates as in~\eqref{ellipH2wave}, one finds that for
a.e.~$t$ we have
\[
\sum_{j=0}^{k-1} \|\pd_t^jv(t)\|_{\bH^{k+1-j}_a}\leq
  c\bigg(E_k(t)+\sum_{j=0}^{k-1}\|\pd_t^jF(t)\|_{\bH^{k-1-j}_1}\bigg)\,.
\]
Arguing now as in the proof of~\eqref{secondeq}, one then finds that
$E_k$ satisfies the estimate
\begin{equation}\label{pdtEk}
\pd_t E_k(t)\leq caE_k(t)+ c E_k(t)^{\frac12}
\sum_{j=0}^{k}\|\pd_t^jF(t)\|_{ \bH^{k-j}_1}\,,
\end{equation}
to which we can apply Gronwall's inequality. This yields the desired estimates arguing as above. The details are omitted.
\end{proof}

Our final result for this section gives us estimates for the time evolution of the $\bH^{k}$ norm of $v$ and $\partial_{t}v$ again in terms of $F$ and the initial data $v_{0}\,v_{1}$. 
\begin{proposition}\label{P.HkEvolution}
Under the same hypotheses of Theorem~\ref{T.higherwave}, there is a
constant $c$ that does not depend on $T$ such that for a.e.~$t$
\[
\|v(t)\|_{\bH^{m+1}}+\|\pd_tv(t)\|_{\bH^m}\leq c\,
\e^{ca|t|}\bigg(\|v_0\|_{\bH^{m+1}} + \|v_1\|_{\bH^{m}} +
\int_0^t\|F(\tau)\|_{\bH^m}\, d\tau\bigg)\,.
\]
\end{proposition}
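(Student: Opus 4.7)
The plan is to decouple the contribution of the initial data from that of the source via Duhamel's principle, which lets us avoid the time derivatives $\pd_t^j F$ that naturally appear in the higher-order energy inequality (\ref{pdtEk}) and replace them with the bare $L^1_t \bH^m$ norm of $F$ appearing on the right-hand side of the proposition.

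First I would establish the bound in the homogeneous case $F\equiv 0$. Inspecting the proof of Theorem \ref{T.higherwave}, the energy inequality (\ref{pdtEk}) collapses in this case to
\[
\pd_t E_k(t)\leq ca\, E_k(t)\,,
\]
so Gronwall's inequality gives $E_k(t)^{1/2}\leq \e^{ca|t|/2}E_k(0)^{1/2}$ without any extra source contribution. Combining this with the elliptic estimate in the spirit of (\ref{ellipH2wave}), which converts bounds on $\pd_t^j v$ into bounds on the full $\bH^{m+1}$ regularity of $v$ via Corollary~\ref{C.ellip}, and with the norm equivalence (\ref{vE}) at each level, one obtains the desired homogeneous estimate
\begin{equation}\label{hom-est-plan}
\|w(t)\|_{\bH^{m+1}}+\|\pd_t w(t)\|_{\bH^m}\leq c\,\e^{ca|t|}\big(\|w(0)\|_{\bH^{m+1}}+\|\pd_t w(0)\|_{\bH^m}\big)
\end{equation}
for any solution $w$ of $P_g w=0$ with compatible data, with constant $c$ depending on $m$ and $a$ but not on $t$.

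Next I would split the inhomogeneous solution as $v=w+u$, where $w$ solves $P_g w=0$ with $(w(0),\pd_t w(0))=(v_0,v_1)$ and $u$ solves $P_g u=F$ with zero initial data. The bound for $w$ is given directly by (\ref{hom-est-plan}). For $u$ I invoke Duhamel's principle: for each $s\in[0,t]$, let $U_s(\tau)$ be the solution on $\tau\geq s$ of $P_g U_s=0$ with $U_s(s)=0$ and $\pd_\tau U_s(s)=-F(s)$. Since the coefficients of $P_g$ satisfy the form in (\ref{eq-u}) with a first-order time part, a direct computation (differentiating under the integral, using $U_s(s)=0$ at the boundary and $P_g U_s=0$ inside) shows that $u(t)=\int_0^t U_s(t)\,ds$ indeed solves $P_g u=F$ with zero initial data. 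Applying the homogeneous bound (\ref{hom-est-plan}) to each $U_s$ (regarded as starting at time $s$ rather than $0$, which is permissible since nothing in the energy argument depended on the initial time being $0$) yields
\[
\|U_s(t)\|_{\bH^{m+1}}+\|\pd_t U_s(t)\|_{\bH^m}\leq c\,\e^{ca|t-s|}\|F(s)\|_{\bH^m}\,,
\]
and integrating in $s$, using $\e^{ca|t-s|}\leq \e^{ca|t|}$ for $s$ between $0$ and $t$, gives the claimed bound for $u$.

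The main technical obstacle will be justifying the Duhamel decomposition, since the auxiliary Cauchy problems for $U_s$ must satisfy the compatibility conditions of Theorem~\ref{T.higherwave} at the initial time~$s$; expanding those conditions through the equation $P_g U_s=0$ reduces them to algebraic combinations of $F(s)$ and its spatial derivatives, which need not a priori lie in $\bH^1_0$. I would handle this in the standard way: first approximate $F$ by smooth, compactly supported functions for which the compatibility conditions at every $s$ are automatic, deduce the estimate in this dense subclass, and then pass to the limit using the a priori bound itself together with the linearity of the problem. The transition from the $\bH^k_a$ norms used internally to the plain $\bH^k$ norms in the statement is immediate for any fixed $a$, since the constant $c$ in the proposition is allowed to depend on $a$.
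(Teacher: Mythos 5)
Your proposal takes a genuinely different route from the paper. The paper proves the proposition directly, by introducing the higher-order energy
\[
e_m(t)=\sum_{j+|\beta|\le m}\int\Big[(\bD_\xi^{(j)}D_\te^\beta v_t)^2+\tfrac{b}{a^2}(\bD_\xi^{(j+1)}D_\te^\beta v)^2+\cdots\Big]\,\xi\,d\xi\,d\te,
\]
showing $e_m(t)\sim\|v(t)\|_{\bH^{m+1}}^2+\|\pd_t v(t)\|_{\bH^m}^2$, and then verifying a single Gronwall inequality $\pd_t e_m\le c\,e_m+c\,e_m^{1/2}\|F(t)\|_{\bH^m}$; the delicate point there is that the ordering of the twisted derivatives must be respected because the commutator $[\bD_\xi,\bD_\xi^*]=(1-2\al)/\xi^2$ is singular. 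You instead extract a homogeneous estimate from the proof of Theorem~\ref{T.higherwave}, decompose $v=w+u$, and represent $u$ by Duhamel, trading the time derivatives $\pd_t^j F$ appearing in \eqref{pdtEk} for the bare $L^1_t\bH^m$ norm of $F$. This is an attractive strategy because it avoids constructing $e_m$ and the associated twisted-derivative bookkeeping. The Duhamel computation itself is sound: the first-order-in-$t$ terms of $P_g$ commute with $\int_0^t\cdot\,ds$ precisely because $U_s(s)=0$, and the homogeneous estimate really does hold from an arbitrary initial time $s$ with a constant independent of $s$, since the energy inequality depends only on the uniform bounds on the coefficients of $P_g$ over the patch.

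However, there is a genuine gap in the way you dispose of the compatibility conditions, and approximating $F$ alone does not close it. For the decomposition $v=w+u$ you must apply the homogeneous estimate to $w$, which requires $w_j:=\pd_t^j w(0)\in\bH^1_0$ for all $j\le m$. But $w_j$ is determined by $(v_0,v_1)$ through the equation $P_g w=0$, whereas the hypothesis of Theorem~\ref{T.higherwave} only gives $v_j\in\bH^1_0$, where $v_j$ is determined by $(v_0,v_1,F)$ jointly. Already for $m\ge 2$ one has $w_2=v_2+F(0)$ (up to the lower-order terms of $P_g$), so $w_2\in\bH^1_0$ is equivalent to $F(0)\in\bH^1_0$, which is not assumed. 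Replacing $F$ by a smooth compactly supported $F^{(n)}$ fixes the compatibility for the Duhamel pieces $U_s^{(n)}$ (as you note), but leaves $w$ and its compatibility functions $w_j$ untouched; indeed, as you approximate $F$ the piece $u_2^{(n)}$ migrates toward $-F^{(n)}(0)\in\bH^1_0$ while $w_2=v_2-u_2$ remains fixed, so the needed regularity of $w_2$ is not recovered in the limit. To salvage the argument you would need to approximate $(v_0,v_1,F)$ simultaneously in a way that respects the constraints $v_j\in\bH^1_0$ and at the same time decouples into separately compatible $(v_0^{(n)},v_1^{(n)})$ and $F^{(n)}$ — a genuinely more delicate approximation than what you sketch, and one that in particular requires approximating $v_0\in\bH^{m+1}\cap\bH^1_0$ by functions vanishing to higher order near $\{x=0\}$. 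The paper's direct Gronwall argument for $e_m$ sidesteps all of this because it operates on the a priori level for the fixed solution $v$, with no decomposition and hence no extra compatibility requirements beyond those already assumed.
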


\begin{proof}
With the notation of the proof of Theorem~\ref{P.H1AdS} (cf.\ Eq.~\eqref{energye}), let us
set
\begin{multline*}
e_m(t):=\sum_{j+|\be|\leq m}\int_{(0,1)\times\SS^{n-2}} \bigg[
\big(\bD_\xi^{(j)}D_\te^\be v_t\big)^2 +\frac b{a^2}
\big(\bD_\xi^{(j+1)}D_\te^\be v\big)^2 \\
+\ga^{ij} \, 
\bD_\xi^{(j)} D_\te^\be\pd_{\te^i}v \,
\bD_\xi^{(j)} D_\te^\be\pd_{\te^j}v +2ab^i \,
\bD_\xi^{(j)}D_\te^\be\pd_\xi v\, \bD_\xi^{(j)}D_\te^\be
\pd_{\te^i}v\bigg]\, \xi\, d\xi\, d\te\,,
\end{multline*}
where $\be=(\be_1,\dots,\be_{n-2})$ denotes a multiindex and, as usual, we are abusing the
notation for angular derivatives. One should compare this expression
with that of the energy $E[v](t)$, see Eq.~\eqref{energy}. Arguing as in
the case of $E[v](t)$, it is easy to see that $e_m(t)$ satisfies
\begin{equation}\label{emHm}
\frac{e_m(t)}{c}\leq \|v(t)\|_{\bH^{m+1}}^2+\|\pd_tv(t)\|_{\bH^m}^2\leq
c\, e_m(t)\,.
\end{equation}

A tedious but straightforward
computation similar to the ones carried out in the proof of
Theorem~\ref{T.higherwave} shows that the derivative of $e_m$ can be
estimated by
\[
\pd_t e_m(t)\leq c e_m(t)+ce_m(t)^{\frac12}\,\|F(t)\|_{\bH^m}\,.
\]
Since the commutator 
\[
[\bD_\xi,\bD_\xi^*]w=\frac{1-2\al}{\xi^2}
\]
is singular at $\xi=0$, the key point to check in order to derive this
inequality is that all the twisted derivatives appear with the right
ordering. Once the differential inequality has been established,
the proposition follows from the norm equivalence~\eqref{emHm} and
Gronwall's inequality.
\end{proof}

\section{Peeling off large solutions near the conformal boundary}
\label{S.boundary}

We now move on the case of solutions of the wave equations~\eqref{LW}
and~\eqref{NLW} with nontrivial boundary conditions at the conformal
infinity. Again, we work in an asymptotically AdS chart and our
specific goal for this section is to show how we can reduce the
analysis of a solution with nontrivial boundary conditions at infinity
to that of the sum of certain terms that are large at infinity, but
essentially controlled, and another solution that depends on the
boundary datum in a complicated way but is smaller at infinity. We
refer to this method of passing from the boundary datum to this sum of
large but controlled terms and a smaller function, which is described in
Theorem~\ref{T.large}, as a peeling off of the large solutions near
the conformal boundary. 

We recall that the nonlinearities that we are considering in
Eq.~\eqref{NLW} are of the form~\eqref{nonlinF}. Throughout this paper
we will assume that the exponent $q$ of this nonlinearity satisfies
\[
q\geq \al+2\,.
\]
More general nonlinearities can be dealt with using the
same ideas, but this choice of $F(\nabla\phi)$ will allow for a more
concrete presentation. 

The nonlinear wave equation~\eqref{NLW} with
nonlinearity~\eqref{nonlinF} reads in an asymptotically AdS
chart as
\[
P_g u=Q(u,u)\,,
\]
where we have used the relationship~\eqref{BoxPg} to write the
nonlinear term as
\[
Q(u,v):=x^{q-\frac{n+3}2}\, \widetilde\Ga(t,x,\te)\,
g^{\mu\nu}\,\pd_\mu\big(x^{\frac{n-1}2}u\big)\, \pd_\mu\big(x^{\frac{n-1}2}v\big)\,.
\]
Observe that $\widetilde\Ga$ is smooth up to the boundary (that is, in
$\RR\times[0,a]\times\SS^{n-2}$) and that Greek subscripts run over the set
$\{t,x,\te^1,\dots, \te^{n-2}\}$. The linear wave equation corresponds
to the case where $Q$ is identically zero, cf.~Eq.~\eqref{LW}.

To describe the kind of terms that appear in the equations, we will
introduce the notation $\cO\pol(x^s h^{\leq k})$ to denote functions
that are of order $x^s$ and whose dependence on $(t,\te)$ can be
controlled in terms of a polynomial of the first $k$ derivatives of certain
function $h(t,\te)$. Specifically, given a smooth enough function
$h(t,\te)$, a real $s$ and an integer $k$, we will say that a function
$H(t,x,\te)$ is of order $\cO\pol(x^s h^{\leq k})$ if it can be
written as a finite sum
\[
H(t,x,\te)=\sum_{i=1}^{N_H} a_i(x)\, H_i(t,\te)
\]
in which the summands obey the bounds
\begin{equation}\label{polbounds}
\big|\pd_x^la_i(x)\big|\leq C_{H,l}x^{s-l}\,,\qquad \big|D_{\te
  t}^lH_i(t,\te)\big|\leq C_{H,l}\sum_{|J|\leq l+k}
\prod_{i=1}^{M}|D_{t\te}^{J_i} h(t,\te)|\,.
\end{equation}
Here 
\[
J=(J_1,\dots,J_{M})
\]
is a set of $M$ nonnegative integers, with $M=M_{H,l}$ also depending
on $H$ and the number of derivatives considered, and $|J|$ stands for
the sum of these integers. A crucial property of the summands is that,
by the Moser inequalities, whenever $l+k>\frac{n-1}2$ we have
\begin{equation}\label{boundHi}
\|H_i\|_{H^l_{t\te}}\leq  C_l(M)\, C_{H,l}\,\big(\|h\|_{H^{l+k}_{t\te}} + \|h\|_{H^{l+k}_{t\te}}^M\big)\,.
\end{equation}

When the bounds~\eqref{polbounds} hold with $M=1$, we drop the
subscript to write $\cO(x^sh^{\leq k})$. This case is particularly
relevant because it appears in the analysis of the linear wave
equation. Of course, in this case the estimate~\eqref{boundHi} is
valid for all $l\geq0$.

In the following lemma we describe the basic step of the peeling off
procedure. For simplicity of exposition, {\em we henceforth assume that $2\al$ is not an
  integer}\/, although it will be clear from the proofs that all the
results we prove in this section remain valid when $2\al\in\NN$ (and
for real values of $r,s$) provided one includes suitable logarithmic
terms in the statements when necessary (see Remark~\ref{R.log}
below). The point here is to observe the powers of $x$ that appear in
the different terms and how the implicit constants (namely $C_{H,l}$, $N_H$ and $M=M_{H,l}$, in
the above notation for the function $H\in \cO\pol(x^s h^{\leq k})$)
are controlled in terms of the initial functions $F$ and $G$. To be
more precise, in this section we will say that certain quantity $H\in \cO\pol(x^s h^{\leq
  k})$ is {\em controlled}\/ in terms of $F\in \cO\pol(x^{s'} h^{\leq k'})$ if
the constants of $H$ can be estimated in terms of those of $F$ as
\begin{align*}
C_{H,l}+ M_{H,l}+N_H\leq C\Big(\sup_{0\leq j\leq l}C_{F,j},\sup_{0\leq
  j\leq l}M_{F,j},N_F\Big)\,,
\end{align*}
where the function $C(\cdot,\cdot,\cdot)$ is independent of $H$ and $F$.

\begin{lemma}\label{L.reduction}
Suppose that the function $v$ satisfies an equation of the form
\begin{equation}\label{eq.Lred}
P_g v+Q(v,G)-Q(v,v)=F\,,
\end{equation}
with
\begin{align*}
F\in \cO\pol(x^{-\al+r}h)\,,\qquad 
G\in \cO\pol(x^{-\al+s}\tilde h)\,, 
\end{align*}
for some nonnegative integers $r,s$. Then one can take a function
\begin{equation}\label{eq.Lred2}
v_1:=v+\cO\pol(x^{-\al+r+2}h)+\cO\pol(x^{-\al+r+2}\tilde h)
\end{equation}
that satisfies an equation of the form
\begin{multline}\label{eq.Lred3}
P_g v_1+Q\big(v_1,\cO\pol(x^{-\al+s}\tilde h)+\cO\pol(x^{-\al+r+2}
h)\big)-Q(v_1,v_1)\\
=\cO\pol(x^{-\al+r+2}h^{\leq2})+\cO\pol(x^{-\al+r+2}\tilde
h^{\leq2})\,.
\end{multline}
Furthermore:

\begin{enumerate}
\item All the terms of the form $\cO\pol(x^s h^{\leq k})$ (resp.\
  $\cO\pol(x^s \tilde h^{\leq k})$) are controlled in terms of $F$
  (resp.\ $G$) in the sense specified above.

\item 
If $v$ satisfies the boundary condition $x^\al v|_{x=0}=0$, then we also have $x^\al v_1|_{x=0}=0$. 

\item If $F$ and $G$ are
supported in the region $\{x<a_0\}$, then one can ensure that the
various terms $\cO\pol(\cdots)$ arising in Eqs.~\eqref{eq.Lred2} and~\eqref{eq.Lred3}
are also supported in this region. 

\item If the equation is
linear (i.e., $Q:=0$), the statement remains valid with
$\cO\pol(\cdots)$ replaced by $\cO(\cdots)$.
\end{enumerate}
\end{lemma}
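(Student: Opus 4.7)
The plan is to build an explicit correction $w = w^{(0)} + w^{(1)}$ whose two summands are profile ansatz functions $w^{(k)} \sim c_k\, x^{-\al+r+k+2}\,\hat F_k(t,\te)$, chosen so that $P_g w$ cancels the leading $x^{-\al+r}$ and $x^{-\al+r+1}$ components of $F$. Setting $v_1 := v - w$ then gives $v_1 - v \in \cO\pol(x^{-\al+r+2} h)$, matching~\eqref{eq.Lred2} (the $\tilde h$ summand is allowed to be zero). The new equation~\eqref{eq.Lred3} is produced by substituting $v = v_1 + w$ into~\eqref{eq.Lred} and reorganising the nonlinear terms using the symmetry of $Q$.

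\textbf{Profile extraction and the key identity.} Each summand $a_i(x) H_i(t,\te)$ of $F$ is first Taylor-expanded as
\[
a_i(x) = A_{i,0}\, x^{-\al+r} + A_{i,1}\, x^{-\al+r+1} + \tilde a_i(x), \qquad |\pd_x^l \tilde a_i(x)| \leq C\, x^{-\al+r+2-l},
\]
which the derivative bounds in the definition of $\cO\pol(x^{-\al+r} h)$ allow (one integrates the higher-derivative estimate twice from $x=0$, using that $r$ is an integer). Defining $\hat F_k := \sum_i A_{i,k} H_i$ and $F^{(\geq 2)} := \sum_i \tilde a_i H_i \in \cO\pol(x^{-\al+r+2} h)$ yields the decomposition $F = x^{-\al+r}\hat F_0 + x^{-\al+r+1}\hat F_1 + F^{(\geq 2)}$. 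Next, using $\pd_x^2 + x^{-1}\pd_x - \al^2 x^{-2} = -\bD_x^* \bD_x$, a direct calculation gives, for any smooth profile $f(t,\te)$ and integer $k \geq 0$,
\[
P_g\bigl(x^{-\al+r+k+2}\, f\bigr) = (r+k+2)(r+k+2-2\al)\, x^{-\al+r+k}\, f + R_k,
\]
with $R_k \in \cO\pol(x^{-\al+r+k+2} f^{\leq 2})$ collecting the contributions of $-\pd_t^2 f$, $\De_\te f$, and the subprincipal $\cO(x),\cO(x^2)$ terms of $P_g$ from~\eqref{eq-u}. Because $2\al \notin \NN$, the scalar $(r+k+2)(r+k+2-2\al)$ is nonzero for $k \in \{0,1\}$, so
\[
w := \sum_{k=0}^{1} \frac{\hat F_k(t,\te)}{(r+k+2)(r+k+2-2\al)}\, x^{-\al+r+k+2} \;\in\; \cO\pol(x^{-\al+r+2} h)
\]
produces $P_g w = x^{-\al+r}\hat F_0 + x^{-\al+r+1}\hat F_1 + R$ with $R \in \cO\pol(x^{-\al+r+2} h^{\leq 2})$.

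\textbf{Equation for $v_1$ and auxiliary claims.} Substituting $v = v_1 + w$ into~\eqref{eq.Lred} and using the bilinearity and symmetry of $Q$ (so that $Q(v,v) = Q(v_1,v_1) + 2Q(v_1,w) + Q(w,w)$ and $Q(v,G) = Q(v_1,G) + Q(w,G)$), one obtains
\[
P_g v_1 + Q(v_1, G - 2w) - Q(v_1, v_1) = F^{(\geq 2)} - R - Q(w, G) + Q(w, w).
\]
The ``new $G$'' is $G - 2w \in \cO\pol(x^{-\al+s}\tilde h) + \cO\pol(x^{-\al+r+2} h)$, as demanded by~\eqref{eq.Lred3}. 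A power count using $w \sim x^{-\al+r+2}$, $G \sim x^{-\al+s}$, the asymptotic form $g^{\mu\nu} = \cO(x^2)$ of the inverse metric, and the standing assumption $q \geq \al + 2$, shows that $Q(w,w)$ and $Q(w,G)$ both carry far more powers of $x$ than $-\al+r+2$; hence they fit into $\cO\pol(x^{-\al+r+2} h^{\leq 2}) + \cO\pol(x^{-\al+r+2} \tilde h^{\leq 2})$, the cross term $Q(w,G)$ being split into its $h$- and $\tilde h$-polynomial factors via $|ab| \leq \tfrac12(a^2+b^2)$. The remaining assertions are now routine: (i) the new profiles and the terms $Q(w,w), Q(w,G)$ are built from $F$ and $G$ in a transparent way, so the $\cO\pol$ constants are controlled as stated; (ii) $x^\al w = \cO(x^{r+2})$ vanishes at $x = 0$ since $r \geq 0$, preserving the Dirichlet trace; (iii) multiplying $w$ by a smooth cutoff supported in $\{x < a_0\}$ leaves the construction intact, with the commutator error absorbed into $R$; (iv) when $Q \equiv 0$, no cross term is generated and the profile extraction is linear in the data, so $\cO\pol$ collapses to $\cO$.

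\textbf{Main obstacle.} The main technical point is the profile extraction: one must justify the two-term Taylor expansion of $a_i$ at $x=0$ from the family of bounds $|\pd_x^l a_i(x)| \leq C\, x^{-\al+r-l}$ and track the constants carefully enough to meet the control hypotheses of~(i). The role of the hypothesis $2\al \notin \NN$ is to keep the denominator $(r+k+2)(r+k+2-2\al)$ away from zero; in the resonant case one would have to insert logarithmic profiles $x^{-\al+r+k+2}\log x \cdot \hat F_k$ into $w$, precisely the complication forecast in the standing assumption preceding the lemma.
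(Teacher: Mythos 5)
Your argument is correct and follows the same core plan as the paper's proof — extract the leading two powers of $x$ from $F$, cancel them with explicit profiles via $-\bD_x^*\bD_x(x^s)=(s^2-\al^2)x^{s-2}$, and absorb the remainder of $P_g$ plus the quadratic interactions into the error — but with one genuine structural simplification. The paper performs the cancellation in two rounds ($v\to v_0\to v_1$), allowing for the possibility that the cross terms $Q(w,G)$ and $Q(w,w)$ reintroduce $x^{-\al+r}$ and $x^{-\al+r+1}$ contributions that must themselves be peeled off (the functions ``$H_2$ and $H_3$ \dots which can be identically zero'' appearing in the paper's proof). You instead build a single correction $w$ and dispose of $Q(w,G)$ and $Q(w,w)$ directly by a power count: with $w\sim x^{-\al+r+2}$, $G\sim x^{-\al+s}$, $g^{\mu\nu}=\cO(x^2)$ and the standing assumption $q\geq\al+2$, the $x$-exponent of $Q(w,G)$ is $q+\tfrac{n-5}{2}-2\al+r+s+2\geq\tfrac{n+3}{2}-\al+r+s$, which exceeds $-\al+r+2$ by $\tfrac{n-1}{2}+s>0$; so the paper's second round is in fact always vacuous under its own hypothesis on $q$. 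That is a useful clarification. The one step you (and the paper, with its ``it is clear that $F$ can be written as\dots'') should justify more carefully is the two-term Taylor expansion of the coefficients $a_i(x)$: the defining bounds $|\pd_x^la_i|\leq C_lx^{s-l}$ alone do not force such an expansion — $a_i(x)=x^s\sin(\log x)$ satisfies them and has no leading coefficient at $x=0$ — and your suggested justification by integrating the derivative estimates does not close the gap. The step is harmless in context, since inside Theorem~\ref{T.large} (where the lemma is applied) every $a_i$ arising is of the explicit form $\chi(x)x^s$ with $\chi$ smooth and $\chi\equiv1$ near $x=0$, so the expansion is exact; but as a self-contained lemma the hypothesis on $\cO\pol$ needs to be strengthened accordingly for the profile extraction to be legitimate.
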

\begin{proof}
It is clear that
$F$ can be written as
\[
F=x^{-\al+r}H_1(t,\te)+ x^{-\al+r+2}H_2(t,\te)+\cO\pol(x^{-\al+r+2}h)\,,
\]
where the functions $H_j$ are bounded by powers of $h$ as in
Eq.~\eqref{polbounds}. Let us denote by $\chi(x)$ a smooth function
supported in $\{x<a_0\}$ and equal to $1$ in $\{x<a_0/2\}$. Setting
\[
v_0:=v+\frac{\chi(x)\,x^{-\al+r+2} H_1(t,\te)}{\al^2-(\al-r-2)^2} +
\frac{\chi(x)\, x^{-\al+r+3} H_2(t,\te)}{\al^2-(\al-r-3)^2}
\]
and making use of the elementary identity
\[
\bD_x^*\bD_x(x^s)=(\al^2-s^2)x^{s-2}\,,
\]
we find after a short computation that
\begin{multline*}%\label{eq-red4}
P_gv_0-Q\big(v_0, \cO\pol(x^{-\al+s}\tilde
h)+\cO\pol(x^{-\al+r+2}h)\big) + Q(v_0,v_0)\\
= \cO\pol(x^{-\al+r+2}h^{\leq 2})+\cO\pol(x^{-\al+r+s}h\tilde h) +
\cO\pol\big(x^{-\al+r+s+2}(h\tilde h)^{\leq 1}\big)\,.
\end{multline*}
Using that
\[
\cO\pol\big(x^s(GH)^{\leq k}\big)\in \cO\pol(x^sG^{\leq k}) +  \cO\pol(x^sH^{\leq k})
\]
we can now write the RHS of this equation as
\[
x^{-\al+r} H_2(t,\te)+ x^{-\al+r+1} H_3(t,\te)+
\cO\pol(x^{-\al+r+2}h^{\leq 2})+ \cO\pol\big(x^{-\al+r+s+2}\tilde h^{\leq 1}\big)\,,
\]
where $H_2$ and $H_3$ (which can be identically zero) can be controlled pointwise in terms of powers
of $h$ and $\tilde h$. If we now define
\[
v_1:=v_0+\frac{\chi(x)\,x^{-\al+r+2} H_2(t,\te)}{\al^2-(\al-r-2)^2} +\frac{\chi(x)\,x^{-\al+r+3} H_3(t,\te)}{\al^2-(\al-r-2)^2}\,,
\]
we readily find that $v_1$ satisfies an equation of the
form~\eqref{eq.Lred3}. The statement about the boundary conditions of
$v_1$ and the support of the various terms $\cO\pol(\cdots)$ stems
from the construction, as does the fact that the statement remains
valid when $Q$ is zero and all the subscripts are dropped in the terms
$\cO\pol(\cdots)$. 
\end{proof}

\begin{remark}\label{R.log}
The argument is actually valid for all real values of $\al$ and $r$,
the only difference being that when $\al=-\al+r+m$ for some
nonnegative integer $m\leq3$, the summand that we use to construct $v_1$
or $v_2$ whose denominator would apparently be singular is replaced by
a term $\log x\, x^\al H_j(t,\te)$.
\end{remark}

After describing the basic step of the peeling off procedure, we are
ready to state and prove the main result of this section:

\begin{theorem}\label{T.large}
Suppose that $u$ satisfies the equation
\[
P_gu=Q(u,u)
\]
with boundary condition $x^\al u|_{x=0}=f(t,\te)$. Then, for any positive
integer $k$, one can take a function 
\[
u_k=u-\sum_{j=0}^{k-1}\cO\pol(x^{-\al+2j} f^{\leq 2j})
\]
that satisfies an equation of the form
\[
P_g u_k=Q(u_k,u_k)+ Q\bigg(u_k,\sum_{j=0}^{k-1}\cO\pol(x^{-\al+2j} f^{\leq
  2j})\bigg)+ \cO\pol(x^{-\al+2k-2} f^{\leq2k})
\]
and the homogeneous boundary condition $x^\al
u_k|_{x=0}=0$. Furthermore,

\begin{enumerate}
\item All the terms $\cO\pol(x^{-\al+r}f^{\leq 2j})$ that appear can be
assumed to be supported in the region $\{x< a_0\}$ for any fixed
$a_0$. 
\item The implicit constants that appear in the terms
$\cO\pol(x^sf^{\leq k})$ admit bounds independent of $f$. 

\item If the equation is
linear ($Q:=0$), the statement remains valid with
$\cO\pol(\cdots)$ replaced by $\cO(\cdots)$.
\end{enumerate}
\end{theorem}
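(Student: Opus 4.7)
The plan is to induct on $k$, using Lemma~\ref{L.reduction} for the inductive step and handling the base case by hand in order to absorb the inhomogeneous boundary datum.

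For the base case $k=1$, let $\chi(x)$ be a smooth cutoff supported in $\{x<a_0\}$ and equal to $1$ near $x=0$, and set $w_0:=\chi(x)\,x^{-\al}f(t,\te)\in\cO\pol(x^{-\al}f^{\leq 0})$ and $u_1:=u-w_0$. Since $x^\al w_0|_{x=0}=f$, this immediately gives $x^\al u_1|_{x=0}=0$. Substituting $u=u_1+w_0$ into $P_g u=Q(u,u)$ and using the bilinearity and symmetry of $Q$ rearranges to
\[
P_g u_1+Q(u_1,-2w_0)-Q(u_1,u_1)=Q(w_0,w_0)-P_g w_0.
\]
Two elementary computations then control the right-hand side. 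First, the identity $\bD_x^*\bD_x(x^{-\al})=(\al^2-\al^2)x^{-\al-2}=0$ shows that the potentially leading $x^{-\al-2}$ contribution in $P_g w_0$ cancels exactly, leaving $P_g w_0\in\cO(x^{-\al}f^{\leq 2})$ with only $\pd_t^2 f$ and $\De_\te f$ in the top-order term together with the harmless $\cO(x)$ and $\cO(x^2)$ remainders in $P_g$. Second, since $g^{\mu\nu}=\cO(x^2)$ and the hypothesis $q\geq\al+2$ holds, direct inspection of
\[
Q(w_0,w_0)=x^{q-\frac{n+3}2}\widetilde\Ga\,g^{\mu\nu}\pd_\mu\!\bigl(x^{\frac{n-1}2-\al}\chi f\bigr)\pd_\nu\!\bigl(x^{\frac{n-1}2-\al}\chi f\bigr)
\]
yields $Q(w_0,w_0)\in\cO\pol(x^{-\al+r_0}f^{\leq 2})$ with $r_0\geq 0$. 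Together, the right-hand side lies in $\cO\pol(x^{-\al}f^{\leq 2})$, which is simultaneously the conclusion of the theorem at $k=1$ and the hypothesis of Lemma~\ref{L.reduction} at level $r=0$, $s=0$.

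For the inductive step, assume the theorem at level $k\geq 1$, so that $u_k$ satisfies an equation of the form
\[
P_g u_k+Q\!\Bigl(u_k,-\textstyle\sum_{j=0}^{k-1}\cO\pol(x^{-\al+2j}f^{\leq 2j})\Bigr)-Q(u_k,u_k)=\cO\pol(x^{-\al+2k-2}f^{\leq 2k}).
\]
This fits the hypothesis of Lemma~\ref{L.reduction} with $r=2k-2$, $s=0$, and with $h,\tilde h$ encoding the relevant derivatives of $f$. Invoking the lemma produces
\[
u_{k+1}:=u_k+\cO\pol(x^{-\al+2k}f^{\leq 2k})
\]
satisfying an equation of the form claimed at level $k+1$: the newly added correction is exactly the $j=k$ summand in $\sum_{j=0}^{k}\cO\pol(x^{-\al+2j}f^{\leq 2j})$, and the error sharpens from $\cO\pol(x^{-\al+2k-2}f^{\leq 2k})$ to $\cO\pol(x^{-\al+2k}f^{\leq 2k+2})$. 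The homogeneous boundary condition is preserved because the correction carries a prefactor $x^{-\al+2k}$ with $2k\geq 2$, so it vanishes after multiplication by $x^\al$ at $x=0$. Assertions (i)--(iii) then propagate directly from the corresponding clauses (iii), (i) and (iv) of Lemma~\ref{L.reduction}: every correction inherits the cutoff $\chi$ and is therefore supported in $\{x<a_0\}$; the implicit constants are controlled at each step in terms of those at the previous step, with all $f$-dependence tracked by the $f^{\leq m}$ notation, yielding bounds independent of $f$; and when $Q\equiv 0$ every polynomial dependence collapses to a linear one, so $\cO\pol$ may be replaced by $\cO$ throughout.

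The main obstacle is the base case. Everything downstream reduces mechanically to Lemma~\ref{L.reduction}, but the two computations controlling $P_g w_0$ and $Q(w_0,w_0)$ are where the structure of the problem enters critically: the cancellation of the $x^{-\al-2}$ term in $P_g w_0$ is what makes it possible to impose boundary data of the prescribed form at all, and the constraint $q\geq\al+2$ is precisely the minimal condition ensuring that the quadratic nonlinearity does not reintroduce a worse-than-$x^{-\al}$ singularity. Once these two points are verified, the induction runs automatically, with the only further bookkeeping being the gain of two derivatives of $f$ at each application of the lemma.
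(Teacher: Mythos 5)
Your proposal is correct and follows essentially the same approach as the paper's proof: you make the identical choice $u_1 = u - \chi(x)\,x^{-\al}f$ for the base case and then invoke Lemma~\ref{L.reduction} iteratively for the inductive step. The one thing you add is a welcome expansion of the paper's terse ``we immediately find that\ldots'': you explicitly verify the cancellation $\bD_x^*\bD_x(x^{-\al})=0$ (which is what prevents the Cauchy data term from introducing a singularity worse than $x^{-\al}$) and check that $Q(w_0,w_0)$ stays within $\cO\pol(x^{-\al}f^{\leq 2})$ because of the weight $x^q$ in the nonlinearity and $g^{\mu\nu}=\cO(x^2)$. One minor caveat on your closing remark: the hypothesis $q\geq\al+2$ is sufficient for $Q(w_0,w_0)\in\cO\pol(x^{-\al}f^{\leq 2})$, but the genuinely sharp threshold from your own power counting is $q\geq\al+\tfrac{5-n}{2}$ (and in any case the nonlinear theorem later imposes the stronger $q>\al+m+\eta+\tfrac{5-n}{2}$), so calling $q\geq\al+2$ ``precisely the minimal condition'' overstates it slightly; this does not affect the validity of the proof.
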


\begin{proof}
Let $\chi(x)$ be a smooth function supported in $\{x<a_0\}$ that is
identically equal to $1$ is $\{x<a_0/2\}$.
Setting
\[
u_1:=u-\chi(x)\, x^{-\al}f(t,\te)
\]
we immediately find that $x^\al u_1|_{x=0}=0$ and
\[
P_g
u_1=Q(u_1,u_1)+Q\big(u_1,\cO\pol(x^{-\al}f)\big)+\cO\pol(x^{-\al}f^{\leq
  2})\,,
\]
where the terms $\cO\pol(x^{-\al}f)$ and $\cO\pol(x^{-\al}f^{\leq 2})$
are supported in $\{x<a_0\}$. This proves the statement for $k=1$. The
general case then follows by repeatedly applying
Lemma~\ref{L.reduction} to this equation.
\end{proof}

% \begin{remark}\label{R.bounds}
% It stems from the proof of Lemma~\ref{L.reduction} that the various
% constants implicit in the $\cO\pol(\cdots)$ terms admit natural
% bounds, which implies that suitable norms of these summands can be
% estimated in terms of the function $f$. For instance, the term
% $u_j:=\cO\pol(x^{-\al+2j}f^{\leq 2j})$ that appears in the definition of
% $u_k$ can be estimated in terms of polynomial function $p(f)$ of $f$, 
% with $p(0)=0$, as
% \begin{align*}
% \sup_{x\in(0,a)}  x^{\al-2j}\,\pd_x^l
% \|u_j(\cdot,x,\cdot)\|_{H^m_{t\te}} &\leq
% C_{lm}\|p(f)\|_{H^{m+2j}_{t\te}}\\
% &\leq C_{lm}(\|f\|_{L^\infty_{t\te}})\,\|f\|_{H^{m+2j}_{t\te}}\,,
% \end{align*}
% where to pass to the last line we have used a Moser estimate. When the
% equation we consider is linear ($Q\equiv 0$), the subscript can be
% dropped in $\cO\pol$ (that is, one can take $p(f)=f$ above), so the
% constant in the last line of the above inequality is independent of
% $\|f\|_{L^\infty_{t\te}}$.

% \end{remark}

\section{Holography for the linear wave equation}
\label{S.LW}

We are now ready to show the well-posedness of the
Klein--Gordon equation in a general asymptotically AdS manifold. Our
well-posedness theorem corresponding to the holographic prescription
in this geometric setting (Theorem~\ref{T.LW}) will follow by
combining the energy estimates obtained in Theorems~\ref{P.H1AdS}
and~\ref{T.higherwave} for the wave propagation at infinity, the
peeling properties obtained for the solutions of the linear problem
near the conformal boundary obtained in Theorem~\ref{T.large}, and
standard estimates for the wave equation away from the conformal boundary.

First of all, let us give a precise definition of what an
asymptotically AdS spacetime is. We recall that a Lorentzian manifold
$\cM$ is said to be {\em asymptotically AdS}\/ (cf.\ e.g.~\cite{HT})
if there is a spatially compact set $\cK$ such that $\cM\minus \cK$ is
the union of asymptotically AdS patches. Therefore, $\cM$ is covered
by a finite number of coordinate charts
$\cU_1,\dots,\cU_N,\cV_1,\cdots, \cV_M$, where the patches $\cU_j$ are
asymptotically AdS in the sense of Definition~\ref{D.aAdS} and the
patches $\cV_k$ are contained in~$\cK$ (and therefore ``regular''). We
also assume that $\cM$ is time-oriented, meaning that there is a
nowhere vanishing timelike vector field $\cT$, and that this vector field
coincides with the partial derivative $\pd_t$ in each asymptotically
AdS patch. For simplicity, we will also assume that the time function
$t$ is globally defined in $\cM$, satisfies $\cT t=1$ and foliates
$\cM$ as
\[
\cM=\bigcup_{\tau\in\RR}\cM_\tau\,,\qquad \cM_\tau:=\big\{t=\tau\big\}\,.
\]
We will also use the notation 
\[
\cM_{(T_1,T_2)}=\bigcup_{T_1<\tau<T_2}\cM_\tau
\]
for the part of the manifold sandwiched between two time
slices. Notice that we are not making the assumption that the defining
function $x$ of the conformal boundary is defined in the whole
manifold.

When solving the linear wave equation in $\cM$, the
finite speed of propagation ensures that, if $\phi$ solves the equation
\begin{subequations}\label{eq-cM}
\begin{gather}
\square_g\phi-\mu\phi=F\quad\text{in } \cM\,,\\
\phi|_{\cM_0}=\phi_0\,,\qquad \cT\phi|_{\cM_0}=\phi_1
\end{gather}
\end{subequations}
with $F,\phi_0,\phi_1$ supported in the coordinate chart $\cV_k$ (resp.\ $\cU_j$), for time
$t$ in some small enough interval $(-T,T)$ the function $\phi(t)$ is
supported in a lightly larger set $\cV_k'$ that is still contained in
$\cK$ (resp.\ in another slightly larger asymptotically AdS chart
$\cU_j'$). To exploit this propagation property to immediately derive
global estimates for the wave equation in $\cM$, let us define the
twisted Sobolev norm in $\cM$ at time
$t$, denoted
\[
\|\phi(\tau)\|_{\bH^k(\cM_t)}\,,
\]
as the sum of the usual $H^k$ norm of $\phi(\tau)$ in the spacelike regions
$\cV_k\cap \cM_\tau$ and the twisted norm $\|u(\tau)\|_{\bH^k(\cU_j)}$
corresponding to the asymptotically AdS charts (again, the
relationship between $\phi$ and $u$ in the asymptotically AdS chart is
given by~\eqref{phiu}). The key here is that, near the conformal
boundary, the norm we take for $\phi$ is obtained by transplanting the norm of
the associated function $u$ which we can control with our estimates in an
asymptotically AdS chart. By the propagation properties of the wave
equation, it then follows
that the estimates proved in Theorems~\ref{P.H1AdS}
and~\ref{T.higherwave}, together with the well known results for the
wave equation in globally hyperbolic spaces, yield $\bH^k(\cM_t)$-energy
estimates for the linear wave equation in $\cM$ analogous to those
proved in asymptotically AdS charts.

In view of the applications to Physics, an important technical
observation, due to Holzegel and Warnick~\cite{HW}, is that one can
also allow for the presence of black holes in $\cM$ as long as these
black holes are stationary and one can derive suitable energy
estimates for the wave equation using a redshift argument
(see~\cite{redshift,Tataru}). This corresponds to taking in the
definition of our manifold $\cM$ not only ``regular'' coordinate
patches $\cV_j$ and asymptotically AdS patches $\cU_k$, but also
stationary black hole patches. The Sobolev spaces $\bH^k(\cM_\tau)$
defined above are now defined by adding another term that is read off
from the energy inequalities in stationary black holes. Details,
including the definition of stationary black hole, will not be needed
in this paper and can be found e.g. in~\cite{HW,Tataru}. Note that the
timelike Killing vector of each black hole patch will simply be the
vector field $\cT$.

Let us summarize the conclusion of the preceding discussion in the
following theorem, where of course $\bH^1_0(\cM_\tau)$ stands for the
completion of $C^\infty\cpt(\cM_\tau)$ in the $\bH^1(\cM_\tau)$
norm. The corresponding spacetime norm $\bH^k(\cM)$ is defined in the
obvious fashion and we set
\[
\|\phi\|_{L^\infty\bH^k(\cM_{(T_1,T_2)})}:=\sup_{T_1<\tau<T_2}\|\phi\|_{\bH^k(\cM_\tau)}\,.
\]
In order to derive higher-order estimates, we need to
assume that the functions $v_j$ defined in Eq.~\eqref{vj} vanish at
the conformal boundary, which is the set $\{x=0\}$ in each
asymptotically AdS chart. An equivalent, more convenient way of doing
this more invariantly is to demand that
$\cT^j\phi|_{\cM_0}\in\bH^1_0(\cM_0)$. Of course, this quantity can be
written in terms of the initial conditions $\phi_0,\phi_1$ and the
function $F$ alone.

\begin{theorem}\label{T.wave} 
  Let us fix an integer $m\geq0
$. Assume that $F\in \bH^m(\cM)$,
  $\phi_0\in \bH^{m+1}(\cM_0)$, $\phi_1\in \bH^m(\cM_0)$ and that $\cT^j\phi|_{\cM_0}\in
  \bH^1_0(\cM_0)$ for all $0\leq j\leq m$. Then the
  problem~\eqref{eq-cM} has a unique solution in $\cM$, which is of
  class $\bH^{m+1}(\cM_{(-T,T)})$ for any $T$ and satisfies the estimates
\begin{multline}\label{est-1}
\sum_{l=0}^{m+1}\big\|\cT^l\phi\big\|_{L^\infty\bH^{m+1-l}(\cM_{(-T,T)})}\leq
C_{T}\bigg(\sum_{j=0}^{m-1}\|\cT^jF\|_{L^\infty\bH^{m-1-j}(\cM_{(-T,T)})} \\
+\|\cT^mF\|_{\bL^2(\cM_{(-T,T)})} 
+\|\phi_0\|_{\bH^{m+1}(\cM_0)}+\|\phi_1\|_{\bH^m(\cM_0)}\bigg)\,.
\end{multline}
Furthermore, for a.e.~$\tau$ we also have
\begin{multline}\label{est-2}
  \|\phi\|_{\bH^{m+1}(\cM_\tau)}+ \|\cT \phi\|_{\bH^{m}(\cM_\tau)}\leq C\,
  \e^{C|\tau|}\bigg(\|\phi_0\|_{\bH^{m+1}(\cM_0)}+ \| \phi_1\|_{\bH^{m}(\cM_0)}
  \\
  + \int_0^\tau \|F\|_{\bH^m(\cM_s)}\, ds\bigg)\,,
\end{multline}
where the constant $C$ is independent of $\tau$.
\end{theorem}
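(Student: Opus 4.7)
The plan is to deduce Theorem~\ref{T.wave} by patching the local results already proved in the previous sections: Theorem~\ref{T.higherwave} and Proposition~\ref{P.HkEvolution} on each asymptotically AdS patch $\cU_j$, classical higher-order energy estimates for strictly hyperbolic equations on each regular chart $\cV_k\subset\cK$, and, on any stationary black hole patches, the redshift estimates of~\cite{redshift,Tataru,HW}. The key geometric input is finite speed of propagation for $\square_g-\mu$, which is valid because the conformal boundary is a timelike hypersurface on which the Dirichlet-type condition is imposed.

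First, for existence and uniqueness, I would fix $T>0$, pick a smooth partition of unity $\{\chi_j,\eta_k\}$ subordinate to the finite cover $\{\cU_j\}\cup\{\cV_k\}$, and proceed on successive short time strips. On each strip, local-in-time solutions are produced chart by chart: by the Galerkin approximation used in the proof of Theorem~\ref{P.H1AdS} on each $\cU_j$, by classical theory on each $\cV_k$, and by the redshift argument on any black hole chart. These pieces are superposed via the partition of unity; the length $\Delta t$ of each strip is chosen so short (relative to the distance from $\supp\chi_j$ to $\pd\cU_j$ measured in the characteristic cone of $g$) that finite propagation guarantees the cutoff $\chi_j\phi(t)$ stays supported inside $\cU_j$ throughout the strip. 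Iterating over $\lceil 2T/\Delta t\rceil$ strips exhausts $[-T,T]$, and the absence of finite-time blow-up follows from the a priori estimates below.

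For the estimates, I would pass from $\phi$ to $u:=x^{(1-n)/2}\phi$ inside each $\cU_j$ via~\eqref{BoxPg}, apply Theorem~\ref{T.higherwave} to $\chi_j u$ (which satisfies $P_g(\chi_j u)=\chi_j P_g u + [P_g,\chi_j]u$), and apply the classical higher-order energy estimate to $\eta_k\phi$ on each $\cV_k$. The definition of $\|\cdot\|_{\bH^k(\cM_\tau)}$ as the sum of the twisted contributions from the $\cU_j$ and the untwisted contributions from the $\cV_k$ then makes the per-chart bounds add up to~\eqref{est-1} directly. The commutators $[P_g,\chi_j]u$ and $[\square_g,\eta_k]\phi$ are first-order operators supported in the transition region, where both the twisted and the untwisted norms are equivalent to the standard $H^k$ norm; they contribute terms that are controlled by $\|\phi\|_{\bH^m}$ at the previous level and absorbed into the right-hand side via Gronwall's inequality. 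The pointwise-in-$\tau$ estimate~\eqref{est-2} is obtained in exactly the same manner, but with Proposition~\ref{P.HkEvolution} replacing Theorem~\ref{T.higherwave} on each $\cU_j$, which is what produces the exponential factor $e^{C|\tau|}$ with a constant independent of $\tau$.

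The main obstacle I expect is the bookkeeping around the partition of unity: one must arrange that the transition regions where $\chi_j\neq 0,1$ are strictly inside $\cU_j\cap\{x>\delta\}$ for some $\delta>0$, so that on the support of $[P_g,\chi_j]u$ the coordinate $x$ is bounded away from zero and the twisted norm is equivalent to the ordinary Sobolev norm. This is what makes the commutator terms admissible as lower-order perturbations. Once this geometric setup is in place, the compatibility condition $\cT^j\phi|_{\cM_0}\in\bH^1_0(\cM_0)$ translates locally, after multiplying by the cutoff and passing to the twisted unknown, into the condition $v_j\in\bH^1_0$ required by Theorem~\ref{T.higherwave} on each $\cU_j$, and the global estimates~\eqref{est-1} and~\eqref{est-2} follow by summing the per-chart inequalities and invoking Gronwall.
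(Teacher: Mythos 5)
Your proof is correct and follows essentially the same route as the paper's: patch the local results (Theorems~\ref{P.H1AdS} and~\ref{T.higherwave} and Proposition~\ref{P.HkEvolution} on the AdS charts, classical energy estimates on the regular charts, redshift estimates on any black hole charts) using finite speed of propagation and a time-stepping argument. The one minor point of divergence is that the paper localizes the \emph{data} (decomposing $F,\phi_0,\phi_1$ into chart-supported pieces and tracking the support of each resulting solution) and thereby avoids commutator terms, whereas you localize the \emph{solution} by multiplying by cutoffs $\chi_j,\eta_k$ and absorbing the resulting first-order commutators $[P_g,\chi_j]u$, $[\square_g,\eta_k]\phi$ through Gronwall; both variants are standard, and your observation that the commutator is supported in $\{x>\delta\}$, where twisted and untwisted norms agree, is exactly what makes the second variant go through.
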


\begin{proof}
  The existence of a unique solution to the problem~\eqref{eq-cM} in
  $\cM_{(-T,T)}$ and the fact that it satisfies the
  estimate~\eqref{est-1} in this set follows from the above argument
  provided $T$ is small enough. Notice that the smallness of $T$ is
  only used to ensure that for all times $|t|<T$, the support of the solution to the
  problem \eqref{eq-cM} with $\phi_0$, $\phi_1,$ and $F$ supported in
  the set $\cV_k$ is contained in the slightly larger set $\cV_k'$,
  and similarly with data supported in $\cU_j$. This means that we can
  choose $T$ to be independent of the particular choice of the
  data. Therefore, it is standard that we can repeat the argument,
  now using the functions $(\phi|_{\cM_T},\cT\phi|_{\cM_T})$ as initial
  conditions on $\cM_T$, to eventually derive that the solution exists
  globally and satisfies the bound~\eqref{est-1} for any finite $T$.

The estimate~\eqref{est-2} is then an immediate consequence of
Proposition~\ref{P.HkEvolution} (and of the standard energy estimates for
wave equations in the non-asymptotically AdS charts $\cV_k$).
\end{proof}

Note that the RHS of the inequality~\eqref{est-1} is clearly finite
for $F\in \bH^m(\cM)$. Our next objective is to consider the case
where we have nontrivial boundary conditions at some of the ends of
the asymptotically AdS manifold, as in Eq.~\eqref{BCs2}. 

To begin with, let us make precise the meaning of these boundary
conditions. To thins end, let us recall that $\cM$ has $N$
asymptotically AdS ends, which are covered by patches
$\cU_1,\dots,\cU_N$. Given a function
\begin{equation}\label{datumf}
f=\big(f_1(t,\te),\dots, f_N(t,\te)\big)\,,
\end{equation}
we will say that the field $\phi$
satisfies the boundary condition
\begin{equation}\label{BCs-cM}
x^{\al-\frac{n-1}2}\phi|_{\pd\cM}=f
\end{equation}
in the manifold if at each asymptotically AdS patch $\cU_j$ we have 
\[
x^{\al-\frac{n-1}2}\phi|_{x=0}=f_j\,.
\]
Notice that the variables $(t,\te)$ take values in
$\RR\times\SS^{n-2}$ and that $f$ can be understood as a function
defined on $\pd\cM$. In fact, we will use the notation
\[
\|f\|_{H^k(\pd\cM)}:=\sum_{j=1}^N\|f_j\|_{H^k_{t\te}}\,.
\]

We are now ready to prove that, given a smooth enough datum $f$ on the
conformal boundary, there is a unique solution to the Klein--Gordon
equation that satisfies the boundary condition~\eqref{BCs-cM}, and
that one can satisfactorily estimate this solution in terms of the
boundary datum. In the following theorem, we show how the solution
decomposes as the sum of terms that arise from the peeling-off procedure
described in Section~\ref{S.boundary} and another function that is, in a
way, smaller at infinity. We emphasize that, although the estimates
are presented in twisted Sobolev spaces, Theorem~\ref{T.Morrey}
allows us to convert them into pointwise bounds.

We will state this result for the case where there is
no source term, the boundary datum $f$ is supported in the region
$\{t>0\}$ and the equation has vanishing initial conditions, as this
is the case of direct interest in cosmology and the statement is
simpler. We are thus led to the problem
\begin{subequations}\label{LW-bdry}
\begin{gather}
\square_g\phi-\mu\phi=0\quad\text{in } \cM\,,\\
\phi|_{\cM_0}=0\,,\qquad \cT\phi|_{\cM_0}=0\,.
\end{gather}
\end{subequations}
Of course, adding general initial conditions (compatible with
the chosen boundary datum) and source terms is immediate in view of
Theorem~\ref{T.wave}.

\begin{theorem}\label{T.LW} 
Given a nonnegative integer $m$, let us take any integer
\begin{equation}\label{k>}
k>\frac{m+1+\al}2
\end{equation}
and a boundary datum $f=(f_1,\dots,f_N)$ as in~\eqref{datumf}, where
we assume that $f_j\in H^{m+2k}_{t\te}$ is supported in the region
$\{t>0\}$. Then the problem~\eqref{LW-bdry} has a unique solution,
which is in $\bH^{m+1}\loc(\cM)$ and can be written as
\[
\phi=\sum_{j=0}^k \psi_j\,.
\]
For $j<k$, the function $\psi_j(t,x,\te)$ is supported in the asymptotically AdS
patches $\bigcup_{i=1}^N\cU_i$ and reads as
\begin{equation}\label{asympt-uj}
\psi_j(t,x,\te)=x^{-\al+2j+\frac{n-1}2}\, \chi_j(x)\, H_j(t,\te)\,,
\end{equation}
where $\chi_j(x)$ is a smooth function that is identically $1$ in a
neighborhood of $x=0$ and 
\begin{equation}\label{bound-Hj}
\|H_j\|_{H^{m+2k-2j}_{t\te}}\leq C \|f\|_{H^{m+2k}(\pd\cM)}
\end{equation}
for some constant independent of $f$.  The function $\psi_k(t,x,\te)$
is in $\bH^{m+1}(\cM_{(-T,T)})$ for all $T$ and is bounded as
\begin{equation}\label{bound-phik}
\sum_{l=0}^{m+1}\|\cT^l \psi_k\|_{L^\infty\bH^{m+1-l}(\cM_{(-T,T)})}\leq C_T\|f\|_{H^{m+2k}(\pd\cM)}\,.
\end{equation}
\end{theorem}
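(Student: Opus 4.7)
The plan is to construct $\phi$ by peeling off the singular parts near the conformal boundary via Theorem~\ref{T.large} and then solving the resulting inhomogeneous wave equation with \emph{homogeneous} boundary data via Theorem~\ref{T.wave}. Working in each asymptotically AdS patch $\cU_i$ with the variable $u:=x^{(1-n)/2}\phi$, the relation~\eqref{BoxPg} turns~\eqref{LW-bdry} locally into $P_gu=0$ with boundary trace $x^\al u|_{x=0}=f_i$. Applying Theorem~\ref{T.large} in the linear case $Q\equiv0$ with this value of $k$ produces a decomposition
\[
u \;=\; \sum_{j=0}^{k-1}\chi_j(x)\,x^{-\al+2j}\,H_j(t,\te) \;+\; u_k,
\]
in which each $H_j$ is linear in derivatives of $f_i$ up to order $2j$ (yielding~\eqref{bound-Hj}), while $u_k$ is forced to satisfy $P_gu_k=G_k$ with $G_k\in\cO(x^{-\al+2k-2}f^{\leq 2k})$ and the homogeneous condition $x^\al u_k|_{x=0}=0$. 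Setting $\psi_j:=x^{(n-1)/2}\chi_j(x)\,x^{-\al+2j}H_j(t,\te)$ for $j<k$ (extended trivially outside the corresponding $\cU_i$) gives the explicit singular terms demanded by~\eqref{asympt-uj}.

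By linearity, defining $\psi_k:=\phi-\sum_{j<k}\psi_j$ reduces the theorem to producing a $\psi_k$ that satisfies
\[
\square_g\psi_k-\mu\psi_k=F_k,\qquad \psi_k|_{\cM_0}=\cT\psi_k|_{\cM_0}=0,\qquad x^{\al-\frac{n-1}2}\psi_k|_{\pd\cM}=0,
\]
where, via~\eqref{BoxPg}, the source $F_k$ corresponds near the conformal boundary to $[1+\cO(x^2)]x^{(n+3)/2}G_k$ and is identically zero away from $\pd\cM$. I therefore \emph{take} $\psi_k$ to be the function provided by Theorem~\ref{T.wave}; then $\phi:=\sum_{j\leq k}\psi_j$ automatically solves~\eqref{LW-bdry}. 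Since $f$ is supported in $\{t>0\}$, each $\psi_j$ vanishes on $\cM_0$ together with all its time derivatives; in particular the compatibility conditions $\cT^j\psi_k|_{\cM_0}\in\bH^1_0(\cM_0)$ required by Theorem~\ref{T.wave} hold vacuously for $0\leq j\leq m$.

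The main quantitative step is to verify that $F_k\in\bH^m(\cM)$ with $\|F_k\|_{\bH^m(\cM)}\leq C\|f\|_{H^{m+2k}(\pd\cM)}$, as this is what feeds~\eqref{est-1} to produce~\eqref{bound-phik}. Using the elementary identity $\bD_x^*\bD_x(x^s)=(\al^2-s^2)x^{s-2}$, one sees that any pure radial factor $x^{-\al+2k-2}$ differentiated $j$ times via $\bD_x^{(j)}$ yields a constant multiple of $x^{-\al+2k-2-j}$, so for $j+|\beta|\leq m$,
\[
\big\|\bD_x^{(j)}D_\te^\beta\big(x^{-\al+2k-2}H(t,\te)\big)\big\|_{\bL^2}^2 \leq C\bigg(\int_0^a x^{2(2k-2-j-\al)+1}\,dx\bigg)\,\|D_\te^\beta H(t,\cdot)\|_{L^2_\te}^2,
\]
and the radial integral converges precisely when $2k>m+\al+1$, which is the assumption~\eqref{k>}. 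Combining this with~\eqref{polbounds}--\eqref{boundHi} (and an identical argument for time derivatives up to order $m$) gives the required bound on $\|F_k\|_{\bH^m(\cM)}$; Theorem~\ref{T.wave} then delivers $\psi_k\in\bH^{m+1}(\cM_{(-T,T)})$ together with the estimate~\eqref{bound-phik}.

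Uniqueness is immediate: two solutions differ by a solution of~\eqref{LW-bdry} with $f=0$ and zero initial data, which must vanish by Theorem~\ref{T.wave} with $F=0$. I expect the main bookkeeping obstacle to lie in making the peeling construction of Lemma~\ref{L.reduction} coherent across the finitely many asymptotically AdS patches $\cU_1,\dots,\cU_N$ (so that each $\psi_j$ is a globally defined function on $\cM$ supported in $\bigcup_i\cU_i$) and in tracking carefully how derivatives of $f_i$ accumulate through the $k$ iterations, in order to match the exponent $m+2k-2j$ appearing in~\eqref{bound-Hj}. The condition $k>(m+1+\al)/2$ is the critical threshold at which the peeling process has been iterated just enough times for the leftover source to lie in $\bH^m$, making this choice of $k$ essentially sharp for the proposed argument.
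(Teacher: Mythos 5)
Your proposal follows essentially the same route as the paper's proof: peel off the singular layers via Theorem~\ref{T.large} in the linear case, reduce to a problem with homogeneous boundary data and a source $F_k\in\cO(x^{-\al+2k-2}f^{\leq 2k})$, verify that $x^{-\al+2k-2}\in\bH^m_x$ precisely under the threshold~\eqref{k>} (the radial-integral check you carry out is the same calculation the paper compresses into a single sentence), and then conclude by applying Theorem~\ref{T.wave}. The observations about globality across the charts $\cU_i$, the vanishing of compatibility data from the support hypothesis on $f$, and uniqueness by linearity all match the paper's reasoning.
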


\begin{proof}
Let us recall that, in an asymptotically AdS patch, the equations
$\Box_{g}\phi-\mu\phi=0$ and $P_gu=0$ are related by the
transformation~\eqref{phiu}. Therefore, by applying Theorem~\ref{T.large} in each asymptotically AdS chart of
$\cM$, we infer that there are functions $u_j$ of order
$\cO(x^{-\al+2j}f^{\leq 2j})$ supported in these
charts and such that the function
\[
\psi_k:=\phi-\sum_{j=0}^{k-1}x^{\frac{n-1}2}u_j
\]
satisfies the equation
\begin{equation*}%\label{phik}
\square_g\psi_k-\mu\psi_k=x^{\frac{3-n}2}F
\end{equation*}
and the boundary condition
\[
x^{\al-\frac{n-1}2}\psi_k|_{\pd\cM}=0\,.
\]
Here $F$ is a function supported in the asymptotically AdS charts and
of order $\cO(x^{-\al+2k-2}f^{\leq 2k})$.  Notice that the functions
$\psi_j:=x^{\frac{n-1}2}u_j$ and $x^{\frac{3-n}2}F$ are obviously well
defined globally because $u_j$ is zero outside the asymptotically AdS
charts, and the expression $f^{\leq k}$ for the $N$-component function
$f$ has the obvious meaning. Furthermore, all the implicit functions
in the terms $\cO(\cdots)$ are uniformly bounded, and the fact that
the estimates~\eqref{asympt-uj} and~\eqref{bound-Hj} are satisfied is
an immediate consequence of Theorem~\ref{T.large} and the definition
of $\psi_j$.

Since $u_j=\cO(x^{-\al+2j}f^{\leq 2j})$ and $f$ is supported in the
set $\{t>0\}$, it follows from Theorem~\ref{T.large} that $u_j$ is
also supported in this set, so $\psi_k$ also satisfies the initial
condition
\[
\psi_k|_{\cM_0}=0\,,\qquad \cT\psi_k|_{\cM_0}=0\,.
\]
In an asymptotically AdS chart, the equation satisfied by $u_{k}$ reads
as
\[
P_g u_k=\tilde F
\]
with $\tilde F=\cO(x^{-\al+k-2}f^{\leq 2k})$.

Since the function $x^{-\al+2k-2}$ belongs to the Sobolev space
$\bH^m_x$ when the condition~\eqref{k>} holds,
% and vanishes at $x=0$ when
% \[
% \al\geq1\quad\text{or} \quad -\al+2k-2-m>-\al
% \]
% as a consequence of, e.g., the characterization of boundary traces
% presented in Corollary~\ref{C.density}, 
an easy computation shows that the function $\tilde F$ belongs
to the space $\bH^m(\cM)$ with norm
\[
\|\tilde F\|_{\bH^m(\cM)}\leq C\|f^{\leq 2k}\|_{H^m_{t\te}}\leq C\|f\|_{H^{m+2k}_{t\te}}\,.
\]
Hence we can apply Theorem~\ref{T.wave} to show that there is a unique
solution $\psi_k$ to the above initial-boundary value problem, which
satisfies the bound~\eqref{bound-phik}.
\end{proof}

\section{Application to nonlinear wave equations}\label{Nonlinappl}
\label{S.NLW}

% COMENTARIO EVANS p670: Contraccion en norma debil (H1) + estimaciones en
% norma fuerte (Hk) $\implies$ existencia local no solo para dato pequeno,
% sino para todo dato.

%A RESULT WE NEED TO PUT RIGHT BEFORE PROPOSITION 3.8.

%BACK TO THIS SECTION

Our goal in this section is to prove the local well-posedness of the
nonlinear wave equation~\eqref{NLW} in an asymptotically AdS manifold
with a nontrivial boundary datum~$f$ at conformal infinity
(Eq.~\eqref{BCs2}). As before, we will assume that the nonlinearity is
of the form~\eqref{nonlinF}. To derive this result, which is stated
below as Theorem~\ref{T.nonlinear}, we will make use of the full range
of results obtained in the previous sections on the behavior of the
linear wave equation and the properties of twisted Sobolev spaces. 

Specifically, let us fix some positive number $T$ and consider the problem
\begin{subequations}\label{nonlinear}
\begin{gather}
\square_g\phi-\mu\phi= \Ga\, g(\nabla\phi,\nabla\phi)\quad\text{in } \cM_{(-T,T)}\,,\\
\phi|_{\cM_0}=0\,,\qquad \cT\phi|_{\cM_0}=0
\end{gather}
with the boundary condition
\begin{equation}
x^{\al-\frac{n-1}2}\phi|_{\pd\cM_{(-T,T)}}=f\,,
\end{equation}
\end{subequations}
with $\pd\cM_{(-T,T)}$ denoting the portion of the boundary $\pd\cM$
where $|t|<T$ and $f$ is as in Eq.~\eqref{datumf}. As before, $\Ga$ has the behavior
\[
\Ga=x^q \,\widehat\Ga(t,x,\te)
\]
in each asymptotically AdS chart, where the function $\widehat\Ga$ is smooth up to
the boundary, and the exponent $q$ can be different in each
asymptotically AdS chart. Just as in Theorem~\ref{T.LW}, we can safely
assume that $f$ is identically zero in the region $\{t\leq 0\}$.

The main result of this section is the following theorem, which proves that
the problem~\eqref{nonlinear} is locally well posed in suitable
twisted Sobolev spaces. Notice that the estimate~\eqref{bound-psij} below
makes sense because $\psi_j$ is supported in the asymptotically AdS
charts. By Theorem~\ref{T.Morrey}, the Sobolev estimates established
here immediately yield pointwise estimates for the solution. 

To prove this theorem, we start by peeling off the layers of the
solution that are large at infinity, and then use a suitable bootstrap
argument to control the remaining part of the solution. In the second
argument we need to use the properties of the twisted Sobolev spaces to
prove that certain nonlinear function of the solution is locally
Lipschitz continuous, which is done in Lemma~\ref{L.lipschitz} below.

\begin{theorem}\label{T.nonlinear}
Let us fix an integer $m>\frac n2$. Take nonnegative integers $k,l$
such that, together with the exponents $q$ of the nonlinearity~$\Ga$
  (which may be different in each asymptotically AdS chart), satisfy
  the following conditions:
\begin{enumerate}
\item $k>\dfrac{m+1+\al}2$.\\[0.3mm]
\item $l\geq 2k+m+\dfrac{n-1}2
$.\\[0.3mm]
\item $q>\al+m+\eta+\dfrac{5-n}2$, where $\eta$ is defined as in Proposition~\ref{P.multilinear}.
\end{enumerate}
Take a function $f\in H^l(\pd\cM_{(-T,T)})$. For all $T$ smaller than
some positive constant depending only on the norm
$\|f\|_{H^l(\pd\cM_{(-T,T)})}$, the problem~\eqref{nonlinear} has a
unique solution $\phi$, which can be written as
\[
\phi=\sum_{j=0}^k\psi_j\,.
\]
For $j\leq k-1$ and all $s$, the terms $\psi_j$ are supported in the
asymptotically AdS charts and are bounded as
\begin{align}\label{bound-psij}
\|x^{\al-2j-\frac{n-1}2}\psi_j\|_{W^{s,\infty}_x H^{m+1}_{t\te}} \leq C_s \|f\|_{H^l(\pd\cM_{(-T,T)})}\,,
\end{align}
while the term $\psi_k$ can be estimated as
\begin{equation}\label{bound-psik}
\sum_{l=0}^{m+1}\big\|\cT^l\psi_k\big\|_{L^\infty\bH^{m+1-l}(\cM_{(-T,T)})}\leq
C_T\,\|f\|_{H^l(\pd\cM_{(-T,T)})}\,.
\end{equation}
\end{theorem}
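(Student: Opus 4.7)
The approach is to peel off the layers of $\phi$ that carry the prescribed boundary data via a nonlinear application of Theorem~\ref{T.large}, and then to solve for the remainder $\psi_k$ by a contraction argument in a twisted Sobolev ball on $\cM_{(-T,T)}$.

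In each asymptotically AdS chart, set $u := x^{(1-n)/2}\phi$ so that~\eqref{nonlinear} becomes $P_g u = Q(u,u)$ with $x^\al u|_{x=0} = f_j$. Theorem~\ref{T.large} produces functions $u_j = \cO\pol(x^{-\al+2j} f^{\leq 2j})$ for $0 \leq j \leq k-1$, such that
\[
P_g u_k = Q(u_k,u_k) + Q\bigl(u_k,\textstyle\sum_{j<k} u_j\bigr) + \cO\pol\bigl(x^{-\al+2k-2} f^{\leq 2k}\bigr),
\]
where $u_k := u - \sum_{j<k} u_j$ satisfies $x^\al u_k|_{x=0}=0$. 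Multiplying by $x^{(n-1)/2}$ and cutting off away from the AdS ends yields $\phi = \sum_{j=0}^{k-1}\psi_j + \psi_k$; the bounds~\eqref{bound-psij} follow from the pointwise structure of $\cO\pol(x^{-\al+2j}f^{\leq 2j})$ together with~\eqref{boundHi}, which is applicable because condition~(ii) provides enough derivatives of $f$. In $\phi$-variables the equation for $\psi_k$ reads
\[
\square_g \psi_k - \mu\psi_k = \Ga\, g(\nabla\psi_k,\nabla\psi_k) + B(\psi_k) + S,
\]
where $B$ is linear in its argument and encodes the cross terms with the fixed peeled layers $\psi_j$, while $S$ is an explicit $f$-dependent source. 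Condition~(i) on $k$ ensures, exactly as in the linear case of Theorem~\ref{T.LW}, that $S \in \bH^m(\cM_{(-T,T)})$ with norm bounded by a polynomial in $\|f\|_{H^l(\pd\cM_{(-T,T)})}$.

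Next, introduce the Banach space $X_T$ with norm $\|w\|_{X_T} := \sum_{\ell=0}^{m+1}\|\cT^\ell w\|_{L^\infty\bH^{m+1-\ell}(\cM_{(-T,T)})}$, and for $R>0$ let $B_R\subset X_T$ denote the closed $R$-ball. Define $\Phi\colon B_R \to X_T$ by sending $w$ to the solution of the linear problem with source $\Ga g(\nabla w,\nabla w) + B(w) + S$, vanishing initial data, and vanishing conformal boundary data; Theorem~\ref{T.wave} makes $\Phi$ well defined and gives
\[
\|\Phi(w)\|_{X_T} \leq C_T\bigl(\|\Ga g(\nabla w,\nabla w)\|_{\bH^m(\cM_{(-T,T)})} + \|B(w)\|_{\bH^m(\cM_{(-T,T)})} + \|S\|_{\bH^m(\cM_{(-T,T)})}\bigr).
\]
The main obstacle is to establish a quadratic Lipschitz bound of the form
\[
\|\Ga g(\nabla w_1,\nabla w_1) - \Ga g(\nabla w_2, \nabla w_2)\|_{\bH^m(\cM_{(-T,T)})} \leq C\bigl(\|w_1\|_{X_T} + \|w_2\|_{X_T}\bigr)\, \|w_1 - w_2\|_{X_T},
\]
together with an analogous bound for $B$. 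Back in $u$-variables, each quadratic factor unpacks into a product of two twisted or tangential derivatives of $u$ multiplied by $\widehat\Ga \, x^{q + (n-5)/2}$; the Moser-type Proposition~\ref{P.multilinear} with $l=2$ controls such a product in $\bH^m$ provided the prefactor is at least $x^\eta$, and condition~(iii), $q > \al + m + \eta + (5-n)/2$, supplies exactly that surplus once the shifts induced by the conformal rescaling $\phi = x^{(n-1)/2}u$ and the factor $x^{-\al}$ hidden in twisted derivatives (via Theorem~\ref{T.Morrey}) have been accounted for.

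Finally, choose $R := 2C_T\|S\|_{\bH^m(\cM_{(-T,T)})}$, which is polynomial in $\|f\|_{H^l(\pd\cM_{(-T,T)})}$. The Lipschitz estimate with $w_2 = 0$ shows $\Phi(B_R) \subset B_R$, and the full estimate makes $\Phi$ a strict contraction on $B_R$ as soon as $C_T\cdot C R < 1/2$; since $C_T$ is uniformly bounded on compact time intervals, this holds for $T$ sufficiently small depending only on $\|f\|_{H^l}$. The Banach fixed-point theorem then produces the unique $\psi_k$ satisfying~\eqref{bound-psik}, and uniqueness of $\phi$ in the asserted class follows by applying the same Lipschitz estimate to the difference of two putative solutions. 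The hard part throughout is the combinatorics of weights: the interplay between the twist factor $x^{-\al}$, the conformal weight $x^{(n-1)/2}$, and the nonlinearity weight $x^q$ must conspire so that Proposition~\ref{P.multilinear} applies to each quadratic factor, and conditions~(i)--(iii) are precisely what make the bookkeeping close.
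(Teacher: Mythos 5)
Your proposal follows essentially the same route as the paper: peel off the large layers via Theorem~\ref{T.large}, reduce to a homogeneous boundary-value problem for $\psi_k$, and run a contraction argument whose smallness factor comes from the $T$-dependence of the solution map in Theorem~\ref{T.wave}, with the Lipschitz estimate supplied by the Moser bound of Proposition~\ref{P.multilinear}. The only (cosmetic) divergence is that you set up the contraction directly in the ``full'' space $X_T$ whose norm controls all $\ell\leq m+1$ time derivatives, whereas the paper contracts first in the weaker space $\cX = L^\infty\bH^{m+1}\cap W^{1,\infty}\bH^m$ (which is all that the Lipschitz estimate of Lemma~\ref{L.lipschitz} naturally lives in) and then bootstraps to the higher time-derivative bounds~\eqref{bound-psik}; the paper also isolates the Lipschitz continuity of $\cN$ into a separate lemma rather than inlining it, which makes the weight bookkeeping you allude to (the interaction of $x^{-\al}$, $x^{(n-1)/2}$, and $x^q$) fully explicit.
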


\begin{proof}
Let us start by peeling off the large behavior of the function
$\phi$ near the conformal boundary $\pd\cM_{(-T,T)}$.
Since in an asymptotically AdS patch the equations
$\square_g\phi-\mu\phi=0$ and $P_gu=0$ are related by the
transformation~\eqref{phiu}, we can invoke Theorem~\ref{T.large} in each asymptotically AdS chart of
$\cM$ to show that there are $k-1$ functions of the form
\[
\psi_j=x^{\frac{n-1}2}u_j\,,
\]
with $u_j$ of order $\cO\pol(x^{-\al+2j+\frac{n-1}2}f^{\leq 2j})$
and supported in these charts, such that the function
\[
\psi_k:=\phi-\sum_{j=0}^{k-1}\psi_j
\]
satisfies the equation
\begin{equation}\label{N-phik}
\square_g\psi_k-\mu\psi_k= \cN(\psi_k)+\rho_k
\end{equation}
and the boundary condition
\[
x^{\al-\frac{n-1}2}\psi_k|_{\pd\cM}=0\,.
\]
In this proof, all the implicit constants that appear in terms of
order $\cO\pol(x^rf^{\leq p})$ are bounded independently of $f$. The fact that the function $\psi_j$ satisfies~\eqref{bound-psij} is
immediate in view of Theorem~\ref{T.large}

Let us discuss the various terms that appear in
Eq.~\eqref{N-phik}. The term $\rho_k$ is supported in the
asymptotically AdS charts and of the form
\[
\rho_k=x^{\frac{n-1}2}\,\cO\pol(x^{-\al+2k-2} f^{\leq 2k})\,,
\]
so
\begin{align}\label{normrk}
\|\rho_k\|_{L^\infty\bH^{m}(\cM_{(-T,T)})}\leq C\|f\|_{H^l(\pd\cM_{(-T,T)})}\,.
\end{align}
When evaluated a point outside the asymptotically AdS charts, the
nonlinear term $\cN(\psi_k)$ looks simply as
\[
\cN(\psi_k) =\Ga\, g(\nabla\psi_k,\nabla \psi_k)\,,
\] 
that is, as the nonlinear term that we have introduced in the wave equation.
In an asymptotically AdS chart and setting $u_k:=x^{\frac{1-n}2}\psi_k$, Eq.~\eqref{N-phik} reads as
\[
P_gu_k=\tN(u_k)+\trho_k\,,
\]
where $\trho_k$ is of the
form
\[
\trho_k= \cO\pol(x^{-\al+2k-2}f^{\leq 2k})
\]
and the nonlinearity is
\begin{equation}\label{tN}
\tN(u_k):=Q(u_k,u_k)+Q(R_k,u_k)
\end{equation}
for some function 
\[
R_k=\sum_{j=0}^{k-1}\cO\pol(x^{-\al+2j}f^{\leq 2j})\,.
\]

To prove that the function $\psi_k$ is uniquely determined, we will
use a fixed point argument. For this, let us
consider the Banach space
\[
\cX:=L^\infty\bH^{m+1}(\cM_{(-T,T)})\cap W^{1,\infty}\bH^m(\cM_{(-T,T)})\,,
\]
endowed with its natural norm $\|\cdot\|_{\cX}$. The set of functions
in this space of norm less than $r$ will be denoted by
\[
\cX_r:=\big\{ \phi\in \cX:\|\phi\|_{\cX}<r\big\}\,.
\]
By Lemma~\ref{L.lipschitz} below, the inequality
\begin{equation}\label{eq-Lip1}
\|\cN(\psi)-\cN(\Psi)\|_{L^\infty\bH^m(\cM_{(-T,T)})}\leq C_1\,\|\psi-\Psi\|_{\cX}
\end{equation}
is valid for all $\psi,\Psi\in\cX_r$, with a constant that
depends on $r$ and $\|f\|_{H^l(\pd\cM_{(-T,T)})}$.

Let us now consider the solution map $S$ of the problem
\begin{gather*}
\square_g\phi-\mu\phi=F\quad\text{in } \cM_{(-T,T)}\,,\\
\phi|_{\cM_0}=0\,,\qquad \cT\phi|_{\cM_0}=0
\end{gather*}
with boundary condition
\[
x^{\al-\frac{n-1}2}\phi|_{\pd\cM}=0\,,
\]
which maps each $F\in L^\infty \bH^m(\cM_{(-T,T)})$ to the unique
solution $\phi=:S(F)$ of the problem. Theorem~\ref{T.wave} implies that $S$ defines a bounded map from
$L^\infty\bH^m(\pd\cM_{(-T,T)})$ to $\cX$ whose norm is at most
\begin{equation}\label{bound-S}
\|S\|_{L^\infty \bH^m(\cM_{(-T,T)})\to\cX}\leq C_2T
\end{equation}
for small $T$.

To construct the function $\psi_k$, let us recursively define the functions
\begin{align*}
\Psi_0&:=S\rho_k\,,\\
\Psi_i&:= S\rho_k+S\cN(\Psi_{i-1})\,,\qquad i\geq1\,.
\end{align*}
A standard argument then shows that if $T$ is smaller than some
$T_0(\|f\|_{H^l(\pd\cM_{(-T,T)})})$, then $\Psi_i$ converges to the
only solution $\psi_k$ to our problem, which satisfies the
bound~\eqref{bound-psik}. To see this, notice that the fact that
$\cN(0)=0$ and the estimate~\eqref{eq-Lip1} imply that
\[
\|\cN(\Psi)\|_{L^\infty \bH^m(\cM_{(-T,T)})} \leq C_1R
\]
for all $\Psi\in\cX_r$. Hence taking $r$ large enough (e.g., larger
than $2\|S\rho_k\|_{\cX}$) and using a standard bootstrap argument we
can ensure that
\begin{align*}
\|\Psi_i\|_{\cX}&\leq \|S\rho_k\|_{\cX}+ \|S\cN(\Psi_{i-1})\|_{\cX}\\
&\leq \frac r2+ C_2T \|\cN(\Psi_{i-1})\|_{L^\infty \bH^m(\cM_{(-T,T)})} 
\end{align*}
remains smaller than $r$ for small enough $T$ and all $i$. This allows
us to apply the inequality~\eqref{eq-Lip1} and the
bound~\eqref{bound-S} to estimate
\begin{align*}
\|\Psi_{i+1}-\Psi_i\|_{\cX}&=
\|S\cN(\Psi_{i})-S\cN(\Psi_{i-1})\|_{\cX}\\
&\leq C_1C_2T \|\Psi_{i}-\Psi_{i+1}\|_{\cX}\,.
\end{align*}
Choosing $T$ small enough so that $C_1C_2T<1$, the contraction mapping
theorem yields the desired function $\psi_k\in \cX$ as the limit of
$\Psi_i$ as $i$ tends to infinity. To conclude, the estimates for the
wave equation established in Theorem~\ref{T.wave} and the estimate for
the norm of $\rho_k$ (Eq.~\eqref{normrk} can then be easily bootstrapped
to obtain the bound~\eqref{bound-psik}.
\end{proof}

\begin{remark}\label{R.NLW}
  As required in Physics, the construction ensures that when the
  support of the function $f$ is contained in the set $t\geq T_0$,
  then the solution $\phi$ is identically zero for $t\leq T_0$ and,
  after becoming nonzero, is still a regular solution to the problem
  for a time $T$ that depends only on the norm of the boundary datum
  $f$. This time tends to infinity as the $H^l$ norm of $f$ tends to
  zero because in this case the Lipschitz constant $C_1$ above becomes
  arbitrarily small.
\end{remark}

To conclude, the following lemma contains the proof of the local Lipschitz
continuity of the function $\cN$, which we have used in the proof of
Theorem~\ref{T.nonlinear} above. We borrow the notation from the
demonstration of this result without further mention.

\begin{lemma}\label{L.lipschitz}
Under the hypotheses of Theorem~\ref{T.nonlinear}, the function $\cN$ satisfies the estimate
\[
\|\cN(\phi)-\cN(\psi)\|_{L^\infty\bH^m(\cM_{(-T,T)})}\leq C_0\,\|\phi-\psi\|_{\cX}
\]
for all $\phi,\psi\in\cX_r$, with the constant $C_0$ depending only on
$r$ and $\|f\|_{H^l(\pd\cM_{(-T,T)})}$.
\end{lemma}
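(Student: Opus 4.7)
The plan is to exploit the bilinearity of $\cN$ together with the Moser-type estimate of Proposition~\ref{P.multilinear}, after a careful accounting of the weights in $x$ that arise when derivatives are applied to the conjugation factor $x^{\frac{n-1}2}$.

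First, I would decompose $\cN(\phi)-\cN(\psi)$ patchwise. On the compact region $\cK$, covered by the regular charts $\cV_k$, the coefficient $\Ga$ is smooth and $\cN(\phi)=\Ga\,g(\nabla\phi,\nabla\phi)$ is a standard quadratic nonlinearity in first derivatives, so the classical Moser inequality in $H^m$ (valid since $m>\tfrac{n}{2}$) yields the Lipschitz bound directly from $\phi,\psi\in\cX_r$. In an asymptotically AdS chart I set $u_\sharp:=x^{\frac{1-n}2}\sharp$ for $\sharp\in\{\phi,\psi\}$ and use~\eqref{tN} together with the bilinearity of $Q$ to write
\begin{equation*}
\tN(u_\phi)-\tN(u_\psi)=Q(u_\phi-u_\psi,u_\phi)+Q(u_\psi,u_\phi-u_\psi)+Q(R_k,u_\phi-u_\psi),
\end{equation*}
which reduces matters to a bilinear estimate of the form $\|Q(w,z)\|_{L^\infty_t\bH^m}\leq C\|w\|_{\cX}\|z\|_{\cX}$, valid uniformly for $w,z$ in bounded sets of $\cX$ and also in the case $w=R_k$, whose $\cX$-norm is controlled by $\|f\|_{H^l(\pd\cM_{(-T,T)})}$ thanks to Theorem~\ref{T.large} and hypothesis~(ii).

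Expanding $Q(w,z)$ via the explicit formulas for $\pd_\mu(x^{\frac{n-1}2}w)$ and the asymptotic form of $g^{\mu\nu}$, each term has the schematic form $x^{q+\frac{n-5}{2}+p}\,c(t,x,\te)(D_1w)(D_2z)$, where $D_1,D_2\in\{\mathrm{id},\bD_x,\pd_t,D_\te\}$, $c$ is smooth up to $\{x=0\}$, and $p\in\{0,1,2\}$ counts how many of the two outer derivatives fell on $w,z$ rather than on the factor $x^{\frac{n-1}2}$. To obtain $\bH^m$ control, I would distribute up to $m$ further twisted or angular derivatives via Leibniz, noting that every twisted derivative of a monomial $x^s$ drops the exponent by one while each angular or time derivative preserves it. For the contribution involving $R_k$, its leading $x^{-\al}$ behavior further shaves $\al$ off the resulting $x$-exponent. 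Condition~(iii), namely $q>\al+m+\eta+\frac{5-n}{2}$, is exactly what guarantees that every resulting monomial retains at least a factor $x^\eta$ in front, so that Proposition~\ref{P.multilinear} applies with $l=2$ and yields the required bilinear bound in $\bL^2$.

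The main obstacle will be the bookkeeping in the last step: verifying that every monomial produced by Leibniz admits an estimate via Proposition~\ref{P.multilinear} with the prescribed weight, and that the distribution of derivatives among $w$, $z$, $c$ and $x^s$ is such that neither $w$ nor $z$ ever needs to carry more than the $m+1$ derivatives controlled by $\cX$. The subtlest cases are the borderline $\al=1$, where $\eta>0$ must be chosen arbitrarily small, and the contribution from $R_k$, where the pointwise $x^{-\al}$ behavior permitted on $\bH^{m+1}$ functions by Theorem~\ref{T.Morrey} must be fully absorbed by the positive surplus power of $x$ that hypothesis~(iii) provides.
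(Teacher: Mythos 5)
Your proposal follows essentially the same route as the paper: conjugating by $x^{\frac{1-n}2}$, reducing to a bilinear bound on $Q$ via a telescoping decomposition, expanding $\bD_x^{(r)}D_\te^s Q$ by Leibniz, and invoking Proposition~\ref{P.multilinear} with the surplus power of $x$ supplied by hypothesis~(iii). The only slip is your passing remark that the $\cX$-norm of $R_k$ is controlled—$R_k$ need not lie in $\cX$ because of its $x^{-\al}$ singularity—but you already correct this in the final paragraph by noting that the $x^{-\al}$ growth must be absorbed by the power of $x$ from~$q$, which is precisely how the paper handles the term $Q(R_k,u)$.
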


\begin{proof}
When evaluated outside the asymptotically AdS charts, the function
$\cN$ is simply
\[
\cN(\phi)=\Ga\, g(\nabla\phi,\nabla\phi)
\]
and the estimate is standard. Therefore, let us evaluate $\cN$ in an
asymptotically AdS chart and introduce the notation
\[
u:=x^{\frac{1-n}2}\phi\,,\qquad v:=x^{\frac{1-n}2}\psi\,.
\]
The estimate for $\cN$ is then equivalent to showing
\begin{equation}\label{LiptN}
\|\tN(u)-\tN(v)\|_{L^\infty_t\bH^m}\leq C\,\|u-v\|_{\tX}
\end{equation}
for all $u,v\in\tX_r$, where the function 
\[
\tN(u,v)=Q(u,u)+Q(R_k,u)
\]
was defined in Eq.~\eqref{tN}. Here
$\tX:=L^\infty_t\bH^{m+1}\cap
W^{1,\infty}_t\bH^m$ and
\[
\tX_r:=\big\{ u\in\tX:\|u\|_{\tX}<r\big\}
\]
with the obvious definition of the norm.

To prove the inequality~\eqref{LiptN}, let us notice that the bilinear
function $Q$ can be written as
\begin{equation*}%\label{Qcij}
Q(u,v)=x^{q+\frac{n-1}2}\bigg[c_{ij}\, \pd_iu\,
\pd_jv+\frac{c_i}x\,\big(\pd_i u\, v+u\, \pd_iv\big)+\frac{c_0}{x^2}uv\bigg]\,,
\end{equation*}
where the functions $c_{ij},c_i,c_0$ are smooth up to the boundary and
the indices $i,j$ (which are summed over) take values in the set
$\{t,x,\te^1,\dots,\te^n\}$. Since the twisted derivative
$\bD_x^{(r)}$ can be written as
\[
\bD_x^{(r)}=\sum_{r_1+r_2=r}b_{r_1r_2}x^{-r_1}\pd_x^{r_2}\,,
\]
one can check that
\begin{multline*}
\bD_x^{(r)}D_\te^sQ(u,v)=\sum_{r_1+r_2+r_3\leq r}\sum_{s_1+s_2\leq s}
x^{q+\frac{n-5}2-r_3}\bigg[\tilde c_{ij }\, \bD_x^{(r_1)}D_\te^{s_1}\pd_iu\,
\bD_x^{(r_2)}D_\te^{s_2} \pd_jv\\
+\frac{\tilde c_{i }}x\,\big(\pd_i \bD_x^{(r_1)}D_\te^{s_1} u\, \bD_x^{(r_2)}D_\te^{s_2} v+u\,
\pd_iv\big)+\frac{\tilde c_{0}}{x^2}\bD_x^{(r_1)}D_\te^{s_1}
u\, \bD_x^{(r_2)}D_\te^{s_2} v\bigg]
\end{multline*}
with some coefficients that are smooth up to the boundary. (In fact,
these coefficients also depend on the indices $r_1,r_2,s_1,s_2$, but
in order to keep the notation simple we are not making explicit this dependence.)

A simple application of Proposition~\ref{P.multilinear} then ensures
that 
\[
\|\bD_x^{(r)}D_\te^sQ(u,v)\|_{\bL^2}\leq C\|u\|_{\tX}\|v\|_{\tX}
\]
whenever $r+s\leq m$ and the exponent $q$ is larger than the quantity
specified in Theorem~\ref{T.nonlinear}. This shows
that
\begin{align}
\|Q(u,u)-Q(v,v)\|_{L^\infty_t\bH^m}&\leq
\|Q(u,u-v)\|_{L^\infty_t\bH^m}+ \|Q(v,u-v)\|_{L^\infty_t\bH^m}\notag \\
&\leq  C_r\,\|u-v\|_{\tX}\,. \label{Lip-Q}
\end{align}

An analogous argument can be used to take care of the term $Q(R_k,u)$,
finding that
\[
\|\bD_x^{(r)}D_\te^sQ(R_k,u)\|_{\bL^2}\leq C(\|f\|_{H^l(\pd\cM_{(-T,T)})})\,\|u\|_{\tX}
\]
under our hypothesis on the exponent~$q$.
By the definition of $\tN$, from this inequality and the
bound~\eqref{Lip-Q} we derive the desired estimate for $\tN$, thereby
completing the proof of the lemma.
\end{proof}

\section*{Acknowledgements}

A.E.\ is financially supported by the Ram\'on y Cajal program of the
Spanish Ministry of Science and thanks McGill University for hospitality
and support. A.E.'s research is supported in part by the Spanish MINECO under
grants~FIS2011-22566 and the ICMAT Severo
Ochoa grant SEV-2011-0087. The research of N.K.\ is supported by NSERC grant
RGPIN 105490-2011.

\bibliographystyle{amsplain}

\end{document}